\documentclass[11pt,reqno,twoside]{article}






\usepackage{cmap} 

\usepackage[T1]{fontenc}
\usepackage[utf8]{inputenc}
\usepackage{graphicx}
\usepackage{placeins}
\usepackage{enumerate}
\usepackage{algcompatible}

\usepackage{verbatim}
\newcommand{\comments}[1]{}

\usepackage{soul}


\usepackage{setspace}

\let\counterwithin\relax  
\usepackage{lmodern} 

\usepackage{bm} 

\usepackage{bbold}

\usepackage{amsmath,amsbsy,amsgen,amscd,amsthm,amsfonts,amssymb} 

\usepackage[centering,top=1in,bottom=1in,left=1in,right=1in]{geometry}



\usepackage[sf,bf,compact]{titlesec}

%
%

\usepackage[usenames,dvipsnames]{xcolor}

\definecolor{dark-gray}{gray}{0.3}
\definecolor{dkgray}{rgb}{.4,.4,.4}
\definecolor{dkblue}{rgb}{0,0,.5}
\definecolor{medblue}{rgb}{0,0,.75}
\definecolor{rust}{rgb}{0.5,0.1,0.1}

\usepackage{url}
\usepackage[colorlinks=true]{hyperref}
\hypersetup{linkcolor=dkblue}    
\hypersetup{citecolor=rust}      
\hypersetup{urlcolor=rust}     

\usepackage[final]{microtype} 

%

\newtheoremstyle{myThm} 
    {\topsep}                    
    {\topsep}                    
    {\itshape}                   
    {}                           
    {\sffamily\bfseries}                   
    {.}                          
    {.5em}                       
    {}  

\newtheoremstyle{myRem} 
    {\topsep}                    
    {\topsep}                    
    {}                   
    {}                           
    {\sffamily}                   
    {.}                          
    {.5em}                       
    {}  

\newtheoremstyle{myDef} 
    {\topsep}                    
    {\topsep}                    
    {\itshape}                   
    {}                           
    {\sffamily\bfseries}                   
    {.}                          
    {.5em}                       
    {}  

    \newtheoremstyle{myDef2} 
    {\topsep}                    
    {\topsep}                    
    {}                   
    {}                           
    {\sffamily\bfseries}                   
    {.}                          
    {.5em}                       
    {}  

\theoremstyle{myThm}
\newtheorem{theorem}{Theorem}[section]
\newtheorem{lemma}[theorem]{Lemma}

\newtheorem{corollary}[theorem]{Corollary}

\theoremstyle{myDef}
\newtheorem{definition}[theorem]{Definition}

\theoremstyle{myDef2}

 \newenvironment{example}
  {\pushQED{\qed}\examplex}
  {\popQED\endexamplex}

 \newenvironment{property}
  {\pushQED{\qed}\propertyx}
  {\popQED\endpropertyx}

\theoremstyle{myRem}

\usepackage{fancyhdr}
\fancyhead[LE,RO]{\textsf{\small \thepage}}



\let\originalleft\left
\let\originalright\right
\renewcommand{\left}{\mathopen{}\mathclose\bgroup\originalleft}
\renewcommand{\right}{\aftergroup\egroup\originalright}


\usepackage{mathtools}
\mathtoolsset{centercolon}  




\renewcommand{\phi}{\varphi}
\newcommand{\eps}{\varepsilon}




\providecommand{\mathbbm}{\mathbb} 

\newcommand{\R}{\mathbbm{R}}

\newcommand{\C}{\mathbbm{C}}




\newcommand{\G}{\mathcal{G}}
\newcommand{\F}{\mathcal{F}}


\definecolor{mygreen}{rgb}{0.13,0.55,0.13}



\newcommand{\M}{\mathcal{M}}




\newcommand{\Nc}{\mathcal{N}}



\newcommand{\J}{{\mathsf{J}}}

\usepackage[font = small, margin=30pt]{caption}

\usepackage[]{algorithm}
\usepackage{enumerate}

\usepackage{graphicx}

\usepackage{authblk}
\makeatletter
\makeatother
\usepackage{chngcntr}
\usepackage{mathrsfs}
\counterwithin{table}{section}
\counterwithin{algorithm}{section}

\usepackage{tikz}
\usetikzlibrary{shapes,arrows,decorations.pathmorphing,backgrounds,positioning,fit,petri}
\usetikzlibrary{calc}
\usetikzlibrary{matrix}
\usetikzlibrary{backgrounds}
\usetikzlibrary{shapes.geometric}
\usepackage{pgfplots}
\pgfplotsset{compat=newest}
\usepgfplotslibrary{groupplots}
\pgfdeclarelayer{background}
\pgfdeclarelayer{bBack}
\pgfsetlayers{bBack,background,main}
\usetikzlibrary{fit}


\title{Hierarchical Bayesian Inverse Problems:\\ A High-Dimensional Statistics Viewpoint} 

\author{D. Sanz-Alonso and N. Waniorek}

\date{University of Chicago}

\vspace{-.5in}

\makeatletter\@addtoreset{section}{part}\makeatother%
\numberwithin{equation}{section}

\newcommand{\upperRomannumeral}[1]{\uppercase\expandafter{\romannumeral#1}}



\DeclareMathOperator*{\argmax}{arg\,max}
\DeclareMathOperator*{\argmin}{arg\,min}

\begin{document}
\maketitle 


\abstract{     
This paper analyzes hierarchical Bayesian inverse problems using techniques from high-dimensional statistics.
Our analysis leverages a property of hierarchical Bayesian regularizers that we call \emph{approximate decomposability} to
obtain non-asymptotic bounds on the reconstruction error attained by \emph{maximum a posteriori} estimators. 
The new theory explains how hierarchical Bayesian models that exploit sparsity, group sparsity, and sparse representations of the unknown parameter can achieve accurate reconstructions in high-dimensional settings.
}\\


\textbf{Keywords:} hiearchical Bayesian inverse problems, high dimensional statistics, MAP estimation, non-asymptotic error bounds\\

\textbf{AMS classification:} 65M32,  62C10, 65F22

\section{Introduction}\label{sec:introduction}
The problem of estimating a high-dimensional parameter from a few noisy and indirect measurements is of central importance in science and engineering \cite{donoho2000high,calvetti2023bayesian,sanz2023inverse} and arises, for instance, in genomics \cite{bickel2009overview}, spatial statistics \cite{gelfand2010handbook},  imaging \cite{david2002hyperspectral,lustig2008compressed}, and data assimilation \cite{law2015data}.
High-dimensionality ---which in many inverse problems results from the unknown parameter being a fine discretization of a function \cite{bui2013computational,stuart2010inverse}--- presents both statistical and computational challenges. From a statistical perspective, consistent estimation of a $d$-dimensional unknown from $n\ll d$ measurements is only possible when the problem exhibits some underlying low-dimensional structure \cite{buhlmann2011statistics,wainwright2019high}. 
From a computational perspective, the scalability of optimization and sampling algorithms hinges on adequately exploiting such structure \cite{cotter2013mcmc,sanz2023analysis,trillos2020consistency,agapiou2017importance}. 

In the Bayesian approach to inverse problems, low-dimensional structure can be imposed on the reconstruction through the choice of prior distribution \cite{sanz2023inverse,kaipio2006statistical}.
Hierarchical modelling \cite{gelman1995bayesian} provides an expressive approach to prior design. The idea is to model the parameters in the prior as random variables distributed according to a hyperprior, thus specifying a joint prior distribution for the unknown of interest and the prior parameters. Conditioning on the observed data yields the posterior distribution, which represents the Bayesian solution to the inverse problem \cite{stuart2010inverse}.
We consider a popular family of sparsity-promoting hierarchical models with a mean-zero Gaussian prior and gamma hyperpriors on the prior variances. This model was originally proposed for inverse problems in medical imaging, such as cerebral source localization \cite{calvetti2009conditionally} and positron emission tomography \cite{bardsley2010hierarchical}. Heuristically, sparsity is promoted in the following way: the gamma hyperprior imposes the belief that most of the variances are small except for a few outliers; consequently, the entries of prior draws will be nearly zero except for those with outlier variances. Beyond the original motivation in imaging, similar hierarchical models have been applied to deconvolution problems \cite{calvetti2020sparse}, dictionary learning \cite{pragliola2022overcomplete,waniorek2023bayesian,bocchinfuso2023bayesian}, multiple measurement inverse problems \cite{glaubitz2023leveraging}, adaptive meshing \cite{bocchinfuso2023adaptive}, and identification of dynamical systems \cite{agrawal2021variational}.

Hierarchical Bayesian models with gamma hyperpriors have been extensively studied from a computational viewpoint. For linear inverse problems, the posterior mode ---known as the \emph{maximum a posteriori} (MAP) estimator---  can be efficiently computed using an iterative alternating scheme that converges linearly \cite{calvetti2015hierarchical}. Path-following methods for MAP estimation were developed in \cite{si2022path}.
For nonlinear inverse problems, the MAP estimator can be approximated using ensemble Kalman methods \cite{kim2023hierarchical}. 
Recently, algorithms for sampling \cite{calvetti2023computationally} and variational inference \cite{agrawal2021variational} have been proposed. While this body of work compellingly showcases the computational benefits afforded by these hierarchical models, reconstruction error bounds are not yet available.

In this paper, we present a new statistical analysis of a family of hierarchical Bayesian models with gamma hyperpriors, obtaining the first error bounds on the reconstruction error of MAP estimators. Our bounds are \emph{non-asymptotic}: they are applicable for any dimension and number of measurements. We view  MAP estimators as particular instances of regularized M-estimators, that is, as being defined by minimizing an objective comprising a loss term and a regularization term.
M-estimators play a central role in the statistical literature \cite{geer2000empirical}. A canonical example is the Lasso estimator \cite{tibshirani1996regression}, which  minimizes the sum of a least-squares loss and an $\ell_1$ penalty
that enforces sparsity in the reconstruction of the unknown \cite{bickel2009simultaneous}. Other widely used M-estimators include the group Lasso \cite{bach2008consistency}, graphical Lasso \cite{jacob2009group}, sparse generalized linear models \cite{van2008high}, and nuclear norm penalization for matrix completion \cite{candes2012exact,recht2010guaranteed}. The similarities between regularized M-estimation and MAP estimation are immediate, and in fact many M-estimators can be derived as MAP estimators of certain posterior distributions. For instance, the Lasso corresponds to the MAP estimator of a posterior derived from a linear forward model with additive Gaussian noise and a Laplace prior \cite{park2008bayesian}. 
Our new theory for hierarchical Bayesian models builds on the general framework for M-estimation developed in \cite{negahban2012unified}, which identifies two key properties of the loss and the penalty under which accurate reconstructions are possible in high-dimensional settings: \emph{restricted strong convexity} of the loss and \emph{decomposability} of the regularizer. As we shall see, decomposability does not hold for the penalties that stem from the hierarchical Bayesian models for inverse problems that we consider. To tackle this challenge, we will introduce a weaker notion of \emph{approximate decomposability} under which we obtain non-asymptotic bounds on the deviation between the MAP estimator and the true unknown generating the data.


\subsection*{Outline and Main Contributions}
Section \ref{sec:Problem Formulation} formalizes the inverse problem of interest and introduces three guiding examples of hierarchical Bayesian models that promote sparsity, group sparsity, and sparse representations. These examples will showcase the broad applicability of our general theory, and will also contribute to making the presentation accessible to a wide audience.
    Section \ref{sec:GeneralTheory} sets forth our unified theoretical framework to establish reconstruction bounds leveraging approximate decomposability of hierarchical Bayesian regularizers. 
    Section \ref{sec:convergence} applies the general theory to our three guiding examples.
    To streamline the presentation, we defer proofs to Section \ref{sec:proofs} and several appendices. We close in Section \ref{sec:conclusions} with conclusions and questions for future research. 

    \medskip
    
This paper contributes to bridge the fields of inverse problems and high-dimensional statistics.
 In addition to providing a unified and accessible perspective of recent literature on inverse problems and statistical M-estimation theory, this paper makes the following original contributions:
\begin{itemize}
    \item Theorem \ref{thm:GSIAS computational properties} proves linear convergence of an iterative alternating scheme to compute the MAP estimator for a hierarchical model that promotes group sparsity. 
    Example \ref{ex:OIAS Example} proposes a novel hierarchical model to promote sparse representations in an overcomplete dictionary. We derive an iterative alternating scheme to compute the MAP estimator, and we prove its linear convergence in Theorem \ref{thm:OIAS computational properties}.
    \item Definition \ref{def:approximate decomposability} introduces the notion of approximate decomposability, and Lemma \ref{lemma:decomposabilityproperty} highlights an important consequence of this property. Our main result, Theorem \ref{thm:reconstruction error theorem}, establishes reconstruction error bounds for M-estimators with approximately decomposable regularizers. Our theory generalizes the one in \cite{negahban2012unified}, which only holds for exactly decomposable regularizers and is not applicable to the hierarchical models we consider.
    \item Theorems \ref{thm:deterministic error bound IAS} and \ref{thm:deterministic bound IAS LQ} provide 
    the first known reconstruction error bounds for MAP estimation using sparsity-promoting conditionally Gaussian models with gamma hyperpriors under both well-specified and misspecified settings. For Gaussian noise models and forward maps with Gaussian columns, we sharply characterize the reconstruction errors under well-specified and misspecified settings in Corollaries \ref{thm:Gaussian Corollary}  and \ref{thm:Gaussian Corollary IAS LQ}. Our results show that with an appropriate choice of hyperparameters, the MAP estimator achieves the minimax rate for high-dimensional sparse linear regression. Analogous results for group sparsity are established in  
    Theorem \ref{thm: deterministic error bound GSIAS} and Corollary \ref{thm: Gaussian Corollary GS},
    and for sparse representations in Theorem \ref{thm: deterministic error bound OSIAS} and Corollary \ref{thm:Gaussian Corollary SR}.
\end{itemize}

\section{Problem Formulation}\label{sec:Problem Formulation}
Consider the inverse problem of reconstructing an unknown $u \in \R^d$ from data $y \in \R^n$ related by
\begin{align}\label{eq:inverse problem}
    y =A u +\eps, \quad \eps \sim \Nc(0,I).
\end{align}
Here, $A\in \R^{n\times d}$ is a given forward map and $\eps$ denotes measurement noise. We assume white noise without loss of generality ---our theory can be readily extended to correlated Gaussian noise using a whitening transformation. We are interested in high-dimensional settings where $d \gg n.$ In this underdetermined regime, accurate reconstruction hinges on low-dimensional structure that arises, for instance, when the unknown is sparse or admits a sparse representation. 

In the Bayesian approach to inverse problems, inference is based on the posterior distribution, which combines the Gaussian likelihood  $y|u \sim \Nc(Au, I)$ implied by \eqref{eq:inverse problem}
 with a prior distribution that encodes structural beliefs on the unknown. We focus on sparsity-promoting hierarchical priors, where the level of sparsity is controlled by an auxiliary parameter $\theta \in \R^k.$ Specifically, we consider priors of the form
$$\pi(u,\theta) = \pi(u|\theta) \pi_\eta(\theta),$$
where the hyperprior $\pi_\eta$ on $\theta$ depends on a parameter $\eta$ that will play an important role in the theory. 
Combining the likelihood and the hierarchical prior, we obtain the posterior distribution 
\begin{align}\label{eq:posterior}
\begin{split}
     \pi(u,\theta |y ) \propto \pi(y|u) \pi(u|\theta) \pi_\eta(\theta)  \propto \exp \bigl( -\mathsf{J}(u,\theta) \bigr),
\end{split}
\end{align} 
where $\mathsf{J}$ denotes (up to an additive constant) the negative log-density of the posterior. 
The \emph{maximum a posteriori} (MAP) estimator for $(u,\theta)$ is then given by
\begin{align}\label{eq:MAP}
    (\hat u, \hat \theta) = \argmax_{u,\theta} \pi(u,\theta | y) 
     = \argmin_{u,\theta} \mathsf{J}(u,\theta).   
\end{align}
This paper conducts a high-dimensional analysis of MAP estimators for hierarchical Bayesian models under the frequentist assumption that
the data are generated from a fixed true value $u^\star$ of the unknown, so that
\begin{align}\label{eq:frequentist inverse problem}
    y=Au^\star + \eps, \quad \eps \sim \Nc(0,I).
\end{align}
Our goal is to derive bounds on the reconstruction error $\hat{\Delta}:=\hat{u}-u^\star$ between the MAP estimator $\hat{u}$ and the true unknown parameter $u^\star$.
In a unified framework, we will analyze several hierarchical models that satisfy the following two key properties:
\begin{property}\label{property1}
 The MAP estimator $(\hat{u},\hat{\theta})$  can be efficiently computed via an alternating optimization scheme of the form:
    \begin{enumerate}
    \item  Initialize $\theta^0,$ $\ell=0.$
    \item  Iterate until convergence
    \begin{enumerate}
        \item[(i)]  $u$-update: $u^{\ell+1} :=\argmin_u \J(u, \theta^{\ell}).$  
        \item[(ii)]  $\theta$-update: $\theta^{\ell+1} :=\argmin_\theta \J(u^{\ell+1}, \theta).$ 
        \item[(iii)] $\ell\to \ell+1.$ \qedhere
    \end{enumerate}
\end{enumerate}
\end{property}
\begin{property}\label{property2}
    If $(\hat{u}, \hat{\theta})$ is the MAP estimator, then 
    $\hat{u}$ is the minimizer of an objective $\mathsf{F}: \R^d \to \R$ of the form
    \begin{equation}\label{eq:objective over u}
    \mathsf{F}(u) := \frac{1}{2n}\| y - Au \|_2^2 + \lambda \mathsf{R}_\eta(u),
\end{equation}
where $\lambda \mathsf{R}_\eta(\cdot)$ is a convex regularization term determined by the hierarchical prior. 
\end{property}
Previous work has largely focused on Property \ref{property1}, motivating hierarchical Bayesian models by the simplicity and effectiveness of alternating optimization algorithms for MAP estimation \cite{calvetti2019hierachical}. On the other hand, in this paper we focus on Property \ref{property2} and show that many regularizers $\mathsf{R}_\eta$ arising from hierarchical Bayesian models satisfy an approximate decomposability property which enables accurate reconstructions under appropriate model assumptions.
We next provide three important examples of hierarchical models that satisfy Properties \ref{property1} and \ref{property2}. In Section \ref{sec:convergence},  we will apply to these three examples the unified theory introduced in Section \ref{sec:GeneralTheory}.
\begin{example}[Hierarchical Models for Sparsity and the IAS Algorithm]\label{ex:IAS Example}
First, we consider a popular hierarchical Bayesian model with gamma hyperpriors to reconstruct sparse parameters, given by
\begin{align}\label{eq:hierarchicalmodelIAS}
\begin{split}
    u| \theta &\sim \Nc \Bigl(0, \frac{\sqrt{2}}{\lambda n} D_\theta \Bigr), \qquad D_\theta = \text{diag}(\theta),\\
    \theta_j &\stackrel{\text{i.i.d}}{\sim} \text{Gamma}\left(\frac{\sqrt{2}\theta_j^*}{\lambda n}, \frac{3}{2}+\frac{\sqrt{2}}{2}\lambda n \eta\right), \quad 1\leq j\leq d,
\end{split}    
\end{align}
with scale parameter $\frac{\sqrt{2}\theta_j^*}{\lambda n}>0$ and shape parameter $\frac{3}{2}+\frac{\sqrt{2}}{2}\lambda n \eta > 0.$ Here, the auxiliary parameter $\theta \in \R^k,$ with $k:=d,$ controls the variance of the coordinates of $u,$ and thereby the level of sparsity imposed on the reconstruction. The hierarchical Bayesian model with gamma hyperpriors in \eqref{eq:hierarchicalmodelIAS} was proposed and investigated in \cite{calvetti2015hierarchical,calvetti2019hierachical}.
From \eqref{eq:hierarchicalmodelIAS},  we obtain the joint prior 
\begin{align*}
    \pi(u,\theta)\propto \exp\left(-\frac{\lambda n}{2\sqrt{2}}\sum_{j=1}^d\frac{u_j^2}{\theta_j}-\frac{\lambda n}{\sqrt{2}}\sum_{j=1}^d \left[ \frac{\theta_j}{\theta_j^*}-\eta\log \frac{\theta_j}{\theta_j^*}\right] \right).
\end{align*}
We assume for the simplicity of our subsequent analysis that the scale parameters are chosen such that $\theta_j^*=\frac{n}{\|A_j\|_2^2},$ where $A_j\in \R^n$ denotes the $j^{th}$ column of the forward map. We then make the transformation $\left(\theta_j,u_j,A\right) \mapsto \left(\frac{\theta_j}{\theta_j^*}, \frac{u_j}{\sqrt{\theta_j^*}}, A D_{\theta^*}^{1/2} \right)$. Notice that our assumption on the scale parameters is therefore tantamount to assuming that the forward map is column-normalized so that $\frac{\|A_j\|_2}{\sqrt{n}}=1$ and that $\theta_j^*=1$ for each $1\leq j\leq d.$
As such, we consider a posterior with negative log-density given, up to an additive constant, by  
\begin{equation}\label{eq:functional}
\mathsf{J}(u,\theta) := 
\mathrlap{  \underbrace{ \phantom{  \frac{1}{2n}\| y  - Au\|_2^2 + 
 \frac{\lambda}{2\sqrt{2}} \sum_{j=1}^d\frac{u_j^2}{\theta_j} }}_{(a)}}  \frac{1}{2n}\| y  - Au\|_2^2   
+  \overbrace{  \frac{\lambda}{2\sqrt{2}} \sum_{j=1}^d\frac{u_j^2}{\theta_j} + \frac{\lambda}{\sqrt{2}} \sum_{j=1}^d\left(\theta_j-\eta \log \theta_j \right) }^{(b)} \,\, .
\end{equation}
The iterative alternating scheme (IAS) computes the MAP estimator for this objective by minimizing the least-squares objective (a) for the $u$-update and minimizing $(b)$ for the $\theta$-update. Given $(u^\ell,\theta^{\ell})$ both updates have a closed form solution:
\begin{align}\label{eq:IASalgorithm}
\begin{split}
   u^{\ell+1}& :=\left(\frac{A^TA}{n}+\frac{\lambda}{\sqrt{2}} D_{\theta^\ell}^{-1} \right)^{-1}\frac{A^Ty}{n}, \\
  \theta_j^{\ell+1} & := \frac{\eta}{2}+\sqrt{\frac{\eta^2}{4}+\frac{(u_j^{\ell+1})^2}{2}  }, \qquad 1 \le j \le d.
\end{split}
\end{align}
For large problems, the $u$-update can be efficiently computed using conjugate gradient with early stopping based on Morozov's discrepancy principle \cite{calvetti2015hierarchical,calvetti2018bayes,calvetti2019hierachical}. The closed form expression for the $\theta$-update was derived in \cite{calvetti2019hierachical}. The IAS algorithm \eqref{eq:IASalgorithm} was introduced in \cite{calvetti2015hierarchical,calvetti2019hierachical} and has been widely studied, see e.g. \cite{calvetti2015hierarchical,calvetti2018bayes,calvetti2019hierachical,calvetti2020sparsity,calvetti2020sparse}.
The following known result verifies Property \ref{property2} and summarizes other key computational properties. 
\begin{theorem}\label{thm:IASconvergence}
    For $\eta>0,$ the objective function $\J$ in \eqref{eq:functional} is strictly convex over $\R^d\times \R^d_+$ and thus admits a unique MAP estimator $(\hat{u},\hat{\theta}).$ Moreover, the $u$-iterates $u^\ell$ of the IAS algorithm \eqref{eq:IASalgorithm} converge at least linearly to $\hat{u}$ in the Mahalanobis norm $\| \cdot \|_{D_{\hat{\theta}}}.$ Furthermore, $\hat{u}$ is the global minimizer of $\mathsf{F}$ in \eqref{eq:objective over u} with 
    \begin{align}\label{eq:regularizer}
    \mathsf{R}_{\eta}(u) :=\frac{1}{\sqrt{2}}\sum_{j=1}^d\biggl[\frac{u_j^2}{2f_j(u)} + f_j(u)-\eta \log f_j(u) \biggr], \quad \quad 
    f_j(u) := \frac{\eta}{2}+\sqrt{\frac{\eta^2}{4}+\frac{u_j^2}{2}} \,.
\end{align}
\end{theorem}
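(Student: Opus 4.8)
The plan is to prove the three assertions in order: strict convexity (which yields existence and uniqueness of the minimizer), linear convergence of the IAS iterates, and the identification of $\hat u$ as the minimizer of $\mathsf{F}$. The heart of the argument is strict convexity, which I would establish by working directly with the Hessian on the open domain $\R^d \times \R^d_+$, where $\mathsf{J}$ is smooth. The least-squares term $\frac{1}{2n}\|y-Au\|_2^2$ is convex in $u$; each coordinate map $(u_j,\theta_j)\mapsto u_j^2/\theta_j$ is the perspective of $t\mapsto t^2$ and hence jointly convex; and each $\theta_j\mapsto-\eta\log\theta_j$ is strictly convex since $\eta>0$. Writing the Hessian quadratic form in a direction $(h_u,h_\theta)$ as a sum of squares,
\[
\frac{1}{n}\|Ah_u\|_2^2+\frac{\lambda}{\sqrt2}\sum_{j=1}^d\frac{1}{\theta_j}\Bigl(h_{u,j}-\tfrac{u_j}{\theta_j}h_{\theta,j}\Bigr)^2+\frac{\lambda\eta}{\sqrt2}\sum_{j=1}^d\frac{h_{\theta,j}^2}{\theta_j^2},
\]
I would observe that if this form vanishes then the last sum forces $h_\theta=0$ (here $\eta>0$ is essential), after which the middle sum forces $h_u=0$. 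Thus the Hessian is positive definite everywhere and $\mathsf{J}$ is strictly convex. The subtlety, and the main obstacle, is that the perspective terms $u_j^2/\theta_j$ are convex but rank-deficient, so one cannot merely sum strictly convex pieces; strict convexity in all directions is rescued precisely by the coupling with the logarithmic barrier, and verifying the sum-of-squares decomposition is where the care lies.

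Existence of a minimizer then follows from coercivity: $\mathsf{J}\to+\infty$ as any $\theta_j\to 0^+$ through $-\log\theta_j$, as $\|\theta\|\to\infty$ through the linear term, and as $\|u\|\to\infty$ through $u_j^2/\theta_j$ balanced against the linear penalty on $\theta$. Coercivity and continuity give an attained minimum, and strict convexity gives uniqueness, with $\hat\theta$ in the interior $\R^d_+$. To then identify $\hat u$ as the global minimizer of $\mathsf{F}$, I would profile out $\theta$. For fixed $u$ the minimization over $\theta$ decouples across coordinates, and setting $\partial_{\theta_j}\mathsf{J}=0$ yields the quadratic $\theta_j^2-\eta\theta_j-u_j^2/2=0$, whose unique positive root is exactly $f_j(u)=\frac{\eta}{2}+\sqrt{\eta^2/4+u_j^2/2}$. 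Substituting $\theta_j=f_j(u)$ back into $\mathsf{J}$ produces $\mathsf{F}(u)=\frac{1}{2n}\|y-Au\|_2^2+\lambda\mathsf{R}_\eta(u)$ with $\mathsf{R}_\eta$ as stated, and convexity of $\mathsf{R}_\eta$ is immediate from the general principle that partial minimization of a jointly convex function over one block of variables yields a convex function of the other. Finally, since $\mathsf{F}(u)=\min_\theta\mathsf{J}(u,\theta)$ and $\hat\theta_j=f_j(\hat u)$ attains the inner minimum at $u=\hat u$, we get $\min_u\mathsf{F}(u)=\min_{u,\theta}\mathsf{J}(u,\theta)=\mathsf{J}(\hat u,\hat\theta)=\mathsf{F}(\hat u)$, so $\hat u$ minimizes $\mathsf{F}$.

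For the linear convergence of the $u$-iterates, I would eliminate $\theta$ to express one IAS cycle as the fixed-point map $u^{\ell+1}=\bigl(\tfrac{A^TA}{n}+\tfrac{\lambda}{\sqrt2}D_{f(u^\ell)}^{-1}\bigr)^{-1}\tfrac{A^Ty}{n}$ and show it is a contraction in the Mahalanobis norm $\|\cdot\|_{D_{\hat\theta}}$. The key quantitative input is that each $f_j$ is Lipschitz with $|f_j'|<1/\sqrt2$ uniformly, so perturbing $u^\ell$ perturbs $D_{f(u^\ell)}^{-1}$ in a controlled way, while the resolvent $\bigl(\tfrac{A^TA}{n}+\tfrac{\lambda}{\sqrt2}D^{-1}\bigr)^{-1}$ behaves contractively in the weighted geometry; combining these bounds gives a contraction factor strictly below one. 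As this part is already established in \cite{calvetti2015hierarchical,calvetti2019hierachical}, I would cite those analyses directly, or reproduce the contraction estimate in the $\|\cdot\|_{D_{\hat\theta}}$ norm. The profiling and convergence steps are comparatively mechanical once the strict-convexity decomposition above is in hand.
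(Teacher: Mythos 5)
Your Hessian sum-of-squares decomposition is correct: the perspective term $u_j^2/\theta_j$ contributes exactly $\tfrac{2}{\theta_j}\bigl(h_{u,j}-\tfrac{u_j}{\theta_j}h_{\theta,j}\bigr)^2$ to the quadratic form, and the $-\eta\log\theta_j$ barrier supplies the missing positive-definiteness in the $\theta$ directions, so strict convexity follows as you say. The coercivity and profiling arguments are likewise the ones the paper uses for the analogous Theorems \ref{thm:GSIAS computational properties} and \ref{thm:OIAS computational properties}: stationarity in $\theta_j$ gives $\theta_j^2-\eta\theta_j-u_j^2/2=0$, whose unique positive root is $f_j(u)$, substitution yields $\mathsf{R}_\eta$, and convexity of $\mathsf{R}_\eta$ plus $\min_u\mathsf{F}=\min_{u,\theta}\mathsf{J}$ closes that part. (The paper states Theorem \ref{thm:IASconvergence} as a known result from \cite{calvetti2015hierarchical,calvetti2019hierachical} and does not reprove it, so for these two claims your argument is a faithful reconstruction of the standard proof.)

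The genuine gap is in the linear-convergence sketch. The composed map $T(u)=\bigl(\tfrac{A^TA}{n}+\tfrac{\lambda}{\sqrt2}D_{f(u)}^{-1}\bigr)^{-1}\tfrac{A^Ty}{n}$ is \emph{not} a global contraction, and the two ingredients you cite do not combine to give a factor strictly below one: already in the scalar case with $A=1$, $n=1$ one computes $T'(u)=\tfrac{y\lambda f'(u)}{\sqrt2\,(f(u)+\lambda/\sqrt2)^2}$, which exceeds $1$ in magnitude for moderate $u$ and large $y$ even though $|f_j'|\le 1/\sqrt2$ everywhere. The argument that actually works --- and the one the paper reproduces for GS-IAS and O-IAS in Appendix \ref{appendix:computationalproperties} --- is a \emph{local} linearization of the error recursion at the fixed point: by Lemma 4.1 of \cite{calvetti2019hierachical}, $e^{\ell+1}=\nabla^2_{uu}\mathsf{J}^{-1}\nabla^2_{u\theta}\mathsf{J}\,\nabla^2_{\theta\theta}\mathsf{J}^{-1}\nabla^2_{\theta u}\mathsf{J}\,e^\ell+o(\|e^\ell\|^2)$ evaluated at $(\hat u,\hat\theta)$, and the iteration matrix, conjugated into the $\|\cdot\|_{D_{\hat\theta}}$ geometry, has operator norm at most $\max_j \hat u_j^2/(\hat u_j^2+\eta\hat\theta_j)<1$, which is where the positivity of $\eta$ enters. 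Your fallback of citing \cite{calvetti2015hierarchical,calvetti2019hierachical} is legitimate (it is exactly what the paper does), but the alternative global-contraction estimate you propose to reproduce would not go through as described; if you want a self-contained proof, follow the local error-recursion route instead.
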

As illustrated by Theorem \ref{thm:IASconvergence}, previous works focus on the convexity of $\J$ and the convergence of IAS iterates $u^\ell$ to the MAP estimator $\hat{u}.$ In this work we focus instead on understanding the convergence of the MAP estimator $\hat{u}$ to the true parameter $u^\star$ generating the data. 
\end{example}

\begin{example}[Hierarchical Models for Group Sparsity and the GS-IAS Algorithm]
\label{ex:GSIAS Example}
The second hierarchical Bayesian model we consider promotes group sparsity. A particular case of the model where each group consists of precisely three indices was proposed in \cite{calvetti2015hierarchical}, and the general setting of arbitrary group sizes was later formulated in \cite{bocchinfuso2023bayesian}.  We assume the a priori knowledge that components of the unknown $u$ are partitioned into groups $\{g_1,\ldots, g_k\}$ where $|g_j|=p_j$. The model also postulates a conditionally Gaussian prior on each group and the variance of each group is modeled via a gamma hyperprior,  
\begin{align}\label{eq:hierarchicalmodelGS-IAS}
\begin{split}
    u_{g_j}| \theta_j &\sim \Nc \Bigl(0,\frac{\sqrt{2}\theta_j}{\lambda n}C_j\Bigr), \qquad C_j \succ 0,\\
    \theta_j &\sim \text{Gamma}\left(\frac{\sqrt{2}\theta_j^*}{\lambda n}, \frac{p_j+2}{2}+\frac{\sqrt{2}}{2}\lambda n \eta\right), \quad 1\leq j\leq k,
\end{split}    
\end{align}
with scale parameter $\frac{\sqrt{2}\theta_j^*}{\lambda n}>0$ and shape parameter $\frac{p_j+2}{2}+\frac{\sqrt{2}}{2}\lambda n \eta$. Here, the auxiliary parameter $\theta\in \R^k$ controls the level of group sparsity in the reconstructed solution. From \eqref{eq:hierarchicalmodelGS-IAS}, we have the joint prior distribution
\begin{align*}
    \pi(u,\theta)\propto \exp \left(-\frac{\lambda n}{2\sqrt{2}}\sum_{j=1}^k\frac{\|u_{g_j}\|_{C_j}^2}{\theta_j}-\frac{\lambda n}{\sqrt{2}}\sum_{j=1}^k\left[\frac{\theta_j}{\theta_j^*}-\eta\log \frac{\theta_j}{\theta_j^*} \right] \right).
\end{align*}
To simplify our analysis, we assume that the scale parameters are chosen such that $\theta_j^*=\frac{n}{\|A_{g_j}\|_{C_j,2}^2}$, where 
$A_{g_j} \in \R^{n \times p_j}$ denotes the matrix obtained by keeping the columns from $A$ in group $g_j,$ and 
$\|A_{g_j}\|_{C_j,2}=\max_{\|v\|_{C_j}=1}\|A_{g_j}\|_2.$ 
Then, via the transformation $(\theta_j,u_{g_j},A_{g_j})\mapsto\left( \frac{\theta_j}{\theta_j^*},\frac{u_j}{\sqrt{\theta_j^*}},A_{g_j}\sqrt{\theta_j^*}\right)$ we see that our assumption on the scale parameters is equivalent to assuming the forward map is block-normalized so that $\frac{\|A_{g_j}\|_{C_j,2}}{\sqrt{n}}=1$ with $\theta_j^*=1.$ Applying Bayes rule, we consider a posterior with negative log-density given, up to an additive constant, by  
\begin{equation}\label{eq:GS functional}
\mathsf{J}(u,\theta) := 
\mathrlap{  \underbrace{ \phantom{  \frac{1}{2n}\| y  - Au\|_2^2 + 
\frac{\lambda}{\sqrt{2}}  \|u\|_{D_{\theta}}^2}}_{(a)}}  \frac{1}{2n}\| y  - Au\|_2^2   
+  \overbrace{  \frac{\lambda}{2\sqrt{2}} \|u\|_{D_{\theta}}^2+ \frac{\lambda}{\sqrt{2}} \sum_{j=1}^k\left(\theta_j-\eta \log \theta_j \right) }^{(b)} \,\, ,
\end{equation}
where $D_\theta$ is the block-diagonal matrix given by
\begin{align*}
    D_{\theta}:=  \begin{bmatrix}
        \theta_1C_1 & &\\
        & \ddots & \\
        & & \theta_k C_k
    \end{bmatrix}.
\end{align*}
To compute the MAP estimator for this objective, we again use an alternating optimization scheme, minimizing the least-squares objective $(a)$ for the $u$-update and minimizing $(b)$ for the $\theta$-update. Given $(u^\ell,\theta^\ell)$, these updates have closed form solutions:
\begin{align}\label{eq:GSIAS algorithm}
\begin{split}
    u^{\ell+1}&:=\left(\frac{A^TA}{n}+\frac{\lambda}{\sqrt{2}}D_{\theta^\ell}^{-1}\right)\frac{A^Ty}{n},\\
    \theta_j^{\ell+1}&:=\frac{\eta}{2}+\sqrt{\frac{\eta^2}{4}+\frac{\|u_{g_j}\|_{C_j}^2}{2}}, \qquad 1 \le j \le k.
    \end{split}
\end{align}
 The group-sparse iterative alternating scheme (GS-IAS) described by \eqref{eq:GSIAS algorithm} was derived in \cite{calvetti2015hierarchical} and it was shown that the iterates converge to the minimizer of \ref{eq:GS functional}, although no rate of convergence was derived. 
This objective and the GS-IAS algorithm admit a number of computationally convenient properties, summarized in the following theorem.
\begin{theorem}\label{thm:GSIAS computational properties}
    For $\eta>0,$ the objective function $\mathsf{J}$ in $\eqref{eq:GS functional}$ is strictly convex over $\R^d\times \R^k_+,$ and thus admits a unique MAP estimator $(\hat{u},\hat{\theta}).$ Moreover, the $u$-iterates $u^\ell$ of the GS-IAS algorithm \eqref{eq:GSIAS algorithm} converge at least linearly to $\hat{u}$ in the Mahalanobis norm $\|\cdot \|_{D_{\hat{\theta}}}$. Furthermore, $\hat{u}$ is the global minimizer of $\mathsf{F}$ in \eqref{eq:objective over u} with 
        \begin{align}\label{eq: GS objective}
    \mathsf{R}_\eta(u):=\frac{1}{\sqrt{2}}\sum_{j=1}^k
    \biggl[ \frac{\|u_{g_j}\|^2_{C_j}}{2f_j(u)}  + f_j(u) - \eta \log f_j(u) \biggr], \qquad  f_j(u):=\frac{\eta}{2}+\sqrt{\frac{\eta^2}{4}+\frac{\|u_{g_j}\|_{C_j}^2}{2}}.
\end{align}
\end{theorem}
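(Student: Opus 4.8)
The plan is to establish the three assertions — strict convexity (hence a unique MAP), the closed-form reduction to $\mathsf{F}$ with regularizer $\mathsf{R}_\eta$, and at-least-linear convergence of the $u$-iterates — by generalizing the argument behind the scalar case, Theorem~\ref{thm:IASconvergence} (cf.\ \cite{calvetti2015hierarchical,calvetti2019hierachical}), replacing the coordinatewise squares $u_j^2$ by the group Mahalanobis squares $\|u_{g_j}\|_{C_j}^2$ and the scalars $\theta_j$ by the block weights in $D_\theta$.

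For joint convexity I would argue termwise. The least-squares term $\frac{1}{2n}\|y-Au\|_2^2$ is convex and independent of $\theta$; each summand $\|u_{g_j}\|_{C_j}^2/\theta_j$ is the perspective of the convex quadratic $v\mapsto v^T C_j^{-1} v$, hence jointly convex on $\R^{p_j}\times\R_{++}$; and each $\theta_j\mapsto \theta_j-\eta\log\theta_j$ is strictly convex on $\R_{++}$, with second derivative $\eta/\theta_j^2>0$. Summing yields joint convexity of $\mathsf J$. For strictness I would split into two cases along any segment between distinct points $(u^1,\theta^1)\neq(u^2,\theta^2)$: if $\theta^1\neq\theta^2$, the strictly convex term $\sum_j(\theta_j-\eta\log\theta_j)$ already forces strict inequality; if $\theta^1=\theta^2=\theta$ but $u^1\neq u^2$, then $\theta$ is fixed along the segment and $\frac{\lambda}{2\sqrt2}\,u^T D_\theta^{-1}u$ is a positive-definite quadratic in $u$ (as $\theta_j>0$ and $C_j\succ0$), again forcing strict inequality.

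Next comes the reduction. Fixing $u$ and minimizing the $\theta$-dependent part $(b)$ over $\theta\in\R^k_{++}$ separates across $j$; the first-order condition for $\frac{\|u_{g_j}\|_{C_j}^2}{2\theta_j}+\theta_j-\eta\log\theta_j$ is the quadratic $\theta_j^2-\eta\theta_j-\tfrac12\|u_{g_j}\|_{C_j}^2=0$, whose positive root is exactly $f_j(u)$ in \eqref{eq: GS objective}; substituting back produces $\lambda\mathsf R_\eta$ and hence the claimed form of $\mathsf F$. Because $\mathsf R_\eta$ is the partial minimum over $\theta$ of a jointly convex function it is convex, confirming Property~\ref{property2}. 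Coercivity of $\mathsf F$ (each $f_j(u)\sim\|u_{g_j}\|_{C_j}/\sqrt2$ at infinity, so $\mathsf R_\eta$ grows like a group-Lasso penalty) then gives existence of a minimizer $\hat u$, and strict convexity of $\mathsf J$ gives uniqueness of $(\hat u,\hat\theta)$; since any minimizer $u'$ of $\mathsf F$ makes $(u',f(u'))$ a minimizer of $\mathsf J$, we conclude $\hat u=\argmin\mathsf F$ is unique with $\hat\theta=f(\hat u)$.

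The main obstacle is the linear rate. Here I would work with the reduced map $\Phi(u):=W_{f(u)}^{-1}\frac{A^Ty}{n}$, where $W_\theta:=\frac{A^TA}{n}+\frac{\lambda}{\sqrt2}D_\theta^{-1}$ encodes the $u$-update and $\theta^\ell=f(u^\ell)$ the $\theta$-update, so that $u^{\ell+1}=\Phi(u^\ell)$ and $\hat u$ is its unique fixed point. Using $\frac{A^Ty}{n}=W_{\hat\theta}\hat u$ I would derive the error recursion
\begin{equation*}
u^{\ell+1}-\hat u=\frac{\lambda}{\sqrt2}\,W_{f(u^\ell)}^{-1}\bigl(D_{\hat\theta}^{-1}-D_{f(u^\ell)}^{-1}\bigr)\hat u,
\end{equation*}
and bound its right-hand side in the Mahalanobis norm $\|\cdot\|_{D_{\hat\theta}}$. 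Two facts make this a contraction with factor $\kappa<1$: the uniform lower bound $f_j(u)\geq\eta$ (which controls $W_\theta^{-1}$ and the conditioning of the block weights), and the global Lipschitz behavior of each $f_j$ viewed as a function of $\|u_{g_j}\|_{C_j}$ (which bounds $|\tfrac{1}{\hat\theta_j}-\tfrac1{\theta_j^\ell}|$ by $\|u^\ell_{g_j}-\hat u_{g_j}\|_{C_j}$). Propagating the matrix-weighted norms $\|\cdot\|_{C_j}$ and the block-diagonal $D_\theta$ through these estimates — rather than the scalar quantities of Theorem~\ref{thm:IASconvergence} — is the delicate bookkeeping; assembling the termwise bounds into a single contraction constant $\kappa<1$ in $\|\cdot\|_{D_{\hat\theta}}$ then yields the claimed at-least-linear convergence.
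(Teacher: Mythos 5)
Your arguments for strict convexity (perspective-function decomposition plus the case split on whether $\theta$ moves along the segment) and for the reduction to $\mathsf{F}$ (solving the separable first-order condition $\theta_j^2-\eta\theta_j-\tfrac12\|u_{g_j}\|_{C_j}^2=0$ for the positive root $f_j(u)$ and substituting back) are correct, and in fact more self-contained than the paper, which delegates both claims to Lemma~2.2 and Theorem~2.1 of \cite{calvetti2015hierarchical} with the remark that those proofs extend verbatim from groups of size three to arbitrary non-overlapping groups.

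The gap is in the convergence argument. Your exact error identity
$u^{\ell+1}-\hat u=\tfrac{\lambda}{\sqrt2}\,W_{f(u^\ell)}^{-1}\bigl(D_{\hat\theta}^{-1}-D_{f(u^\ell)}^{-1}\bigr)\hat u$
is right, but the claim that the bounds $f_j\ge\eta$ and $|f_j(u)-f_j(\hat u)|\le\tfrac{1}{\sqrt2}\|u_{g_j}-\hat u_{g_j}\|_{C_j}$ yield a \emph{global} contraction with factor $\kappa<1$ in the fixed norm $\|\cdot\|_{D_{\hat\theta}}$ is not substantiated, and the natural estimates do not obviously close. The matrix $W_{f(u^\ell)}^{-1}$ couples the blocks, so after conjugating by $D_{\hat\theta}^{-1/2}$ one is left with an operator-norm bound of the form $\max_j(\theta_j^\ell/\hat\theta_j)$ multiplying a blockwise quantity indexed by a \emph{different} $j$; when the ratios $\theta_j^\ell/\hat\theta_j$ vary across blocks this product need not be below one. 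Controlling $W_{f(u^\ell)}^{-1}$ naturally produces the varying norm $\|\cdot\|_{D_{\theta^\ell}}$ rather than the fixed norm $\|\cdot\|_{D_{\hat\theta}}$, and reconciling the two is exactly where the argument stalls. The paper sidesteps this by proving only a \emph{local} (asymptotic) linear rate: it invokes Lemma~4.1 of \cite{calvetti2019hierachical} to write $e^{\ell+1}=\nabla^2_{uu}\J^{-1}\nabla^2_{u\theta}\J\,\nabla^2_{\theta\theta}\J^{-1}\nabla^2_{\theta u}\J\,e^\ell+o(\|e^\ell\|^2)$ with all Hessian blocks evaluated at $(\hat u,\hat\theta)$, conjugates by $D_{\hat\theta}^{1/2}$ so that the factor $\bigl(\tfrac{D_{\hat\theta}^{1/2}A^TAD_{\hat\theta}^{1/2}}{n\lambda}+I\bigr)^{-1}$ has operator norm at most one, and reads off the explicit block-diagonal contraction factor $\bigl\|\operatorname{diag}\bigl(C_j^{1/2}\hat u_{g_j}\hat u_{g_j}^TC_j^{1/2}/(\|\hat u_{g_j}\|_{C_j}^2+\hat\theta_j\eta)\bigr)\bigr\|_{op}<1$. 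To repair your route you would either need to carry out the norm-mismatch bookkeeping and show it still yields a uniform factor below one, or retreat to the local linearization at the fixed point as the paper does.
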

For completeness, we include a proof of Theorem \ref{thm:GSIAS computational properties} in Appendix \ref{appendix:computationalproperties}.
\end{example}

\begin{example}[Hierarchical Models for Sparse Representations and the O-IAS Algorithm]\label{ex:OIAS Example}
Our final motivating example is a novel hierarchical Bayesian model to promote the sparse representation of the unknown in some redundant dictionary, $W\in \R^{d\times k}$.
In this model, we assume the dictionary to be overcomplete, meaning that $k\gg d$. For simplicity, we assume that $W$ is a \textit{tight frame} \cite{waldron2018introduction} normalized so that $WW^T=I$. Relevant examples of such dictionaries include Gabor frames \cite{mallat1999wavelet}, curvelet frames \cite{candes2004new}, and wavelet frames \cite{ron1997affine}. To encode our a priori information that the coefficients of $u$ in $W$ are sparse, we place a Gaussian prior on $W^Tu$ and gamma hyperpriors on the variances,

\begin{align}\label{eq:hierarchicalmodelO-IAS}
    \begin{split}
    W^Tu| \theta &\sim \Nc \Bigl(0, \frac{\sqrt{2}}{\lambda n} D_\theta \Bigr), \qquad D_{\theta}=\text{diag}(\theta),\\
    \theta_j &\sim \text{Gamma}\left(\frac{\sqrt{2}\theta_j^*}{\lambda n}, \frac{3}{2}+\frac{\sqrt{2}}{2}\lambda n \eta\right), \quad 1\leq j\leq k,
    \end{split}
\end{align}
with scale parameters $\frac{\sqrt{2}\theta_j^*}{\lambda n}$ and shape parameters $\frac{3}{2}+\frac{\sqrt{2}}{2}\lambda n \eta$ for $\eta,\lambda>0$. Here, the auxiliary parameter $\theta\in \R^k$ controls the variance of the coefficients of $u$ in the dictionary $W$, and consequently the level of sparsity imposed upon the coefficients. From \eqref{eq:hierarchicalmodelO-IAS} we have the joint prior distribution 
\begin{align*}
    \pi(W^Tu,\theta)\propto \exp \left(-\frac{\lambda n}{2\sqrt{2}}\sum_{j=1}^k\frac{(W^T u)_j^2}{\theta_j}-\frac{\lambda n}{\sqrt{2}}\sum_{j=1}^k\left[\frac{\theta_j}{\theta_j^*}-\eta \log \frac{\theta_j}{\theta_j^*}\right] \right).
\end{align*}
For the convenience of our analysis in the sequel, we assume the scale parameters are chosen such that $\theta_j^*=\frac{n}{\|(AW)_j\|_2^2}$. Making $\left(\theta_j, (W^Tu)_j,(AW)_j \right) \mapsto \left(\frac{\theta_j}{\theta_j^*},\frac{(W^Tu)_j}{\sqrt{\theta_j^*}},(AW)_j\sqrt{\theta_j^*} \right),$ we see that our assumption on the scale parameters is equivalent to assuming that the forward map is normalized so that $\frac{\|(AW)_j\|_2}{\sqrt{n}}=1$ with $\theta_j^*=1$. Consequently, we consider a posterior with negative log-density given, up to an additive constant, by  
\begin{equation}\label{eq:O functional}
\mathsf{J}(u,\theta) := 
\mathrlap{  \underbrace{ \phantom{  \frac{1}{2n}\| y  - Au\|_2^2 + 
 \frac{\lambda}{2\sqrt{2}} \sum_{j=1}^k\frac{(W^Tu)_j^2}{\theta_j} }}_{(a)}}  \frac{1}{2n}\| y  - Au\|_2^2   
+  \overbrace{  \frac{\lambda}{2\sqrt{2}} \sum_{j=1}^k\frac{(W^Tu)_j^2}{\theta_j} + \frac{\lambda}{\sqrt{2}} \sum_{j=1}^k\left(\theta_j-\eta \log \theta_j \right) }^{(b)} \,\, .
\end{equation}
To compute the MAP estimator, we use the following overcomplete iterative alternating scheme (O-IAS), which minimizes  $(a)$ for the $u$-update and $(b)$ for the $\theta$-update:
\begin{align}\label{eq:OIAS algorithm}
\begin{split}
    u^{\ell+1}&:=\left(\frac{A^TA}{n}+\frac{\lambda}{\sqrt{2}}WD_{\theta^\ell}^{-1}W^T\right)\frac{A^Ty}{n},\\
    \theta_j^{\ell+1}&:=\frac{\eta}{2}+\sqrt{\frac{\eta^2}{4}+\frac{(W^Tu)_j^2}{2}}, \qquad 1 \le j \le k.
    \end{split}
\end{align}
The objective \eqref{eq:O functional} and the accompanying O-IAS algorithm \eqref{eq:OIAS algorithm} admit a number of key computationally convenient properties. 
\begin{theorem}\label{thm:OIAS computational properties}
    For $\eta>0$, the objective function $\J$ in \eqref{eq:O functional} is strictly convex over $\R^d\times \R^k_+$, and thus admits a unique MAP estimator $(\hat{u},\hat{\theta})$. Moreover, the $u$-iterates $u^\ell$ of the O-IAS algorithm converge at least linearly to $\hat{u}$ in the Mahalanobis norm $\|\cdot \|_{(WD_{\hat{\theta}}^{-1}W^T)^{-1}}.$ Furthermore, $\hat{u}$ is the global minimizer of $\mathsf{F}$ in \eqref{eq:objective over u} with 
    \begin{align}\label{eq:overcomplete objective}
        \mathsf{R}_{\eta}(u)=\frac{1}{\sqrt{2}}\sum_{j=1}^k\left[\frac{(W^Tu)_j}{2f_j(u)}+f_j(u)-\eta \log f_j(u)\right], \quad \quad f_j(u)=\frac{\eta}{2}+\sqrt{\frac{\eta^2}{4}+\frac{(W^Tu)_j^2}{2}}.
    \end{align}
\end{theorem}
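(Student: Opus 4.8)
The plan is to establish the three assertions---strict convexity with a unique minimizer, the variational characterization via $\mathsf{R}_\eta$, and at-least-linear convergence---by mirroring the arguments behind Theorems \ref{thm:IASconvergence} and \ref{thm:GSIAS computational properties}, adapting each step to accommodate the tight frame $W$. Throughout I would work with the decomposition of $\J$ in \eqref{eq:O functional} into the least-squares term $\frac{1}{2n}\|y-Au\|_2^2$, the coupling terms $\frac{\lambda}{2\sqrt2}\sum_j (W^Tu)_j^2/\theta_j$, and the barrier $\frac{\lambda}{\sqrt2}\sum_j(\theta_j-\eta\log\theta_j)$, treating the three summands separately.

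For strict convexity, the least-squares term is convex in $u$; the barrier $\theta_j-\eta\log\theta_j$ is strictly convex in $\theta_j$ on $(0,\infty)$; and each coupling term $(W^Tu)_j^2/\theta_j$ is jointly convex in $((W^Tu)_j,\theta_j)$, being the perspective of $t\mapsto t^2$, hence jointly convex in $(u,\theta)$ after composition with the affine map $u\mapsto W^Tu$. Convexity of $\J$ follows, and I would upgrade it to strict convexity by inspecting an arbitrary segment with direction $(\delta u,\delta\theta)\neq 0$: if $\delta\theta\neq 0$ the barrier already forces strict convexity, while if $\delta\theta=0$ and $\delta u\neq 0$ the coupling terms reduce to $u\mapsto u^T W D_\theta^{-1}W^T u$, which is strictly convex because $WW^T=I$ makes $W$ full row rank and thus $WD_\theta^{-1}W^T\succ 0$. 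Strict convexity over the convex domain $\R^d\times\R^k_+$, together with coercivity, then yields existence and uniqueness of $(\hat u,\hat\theta)$.

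For the variational characterization I would profile out $\theta$. Fixing $u$, minimization of $\J(u,\cdot)$ decouples across coordinates, and $\partial_{\theta_j}\J=0$ gives $\theta_j^2-\eta\theta_j-(W^Tu)_j^2/2=0$, whose positive root is exactly $f_j(u)$ in \eqref{eq:overcomplete objective}. Substituting $\theta_j=f_j(u)$ back produces $\mathsf{F}(u)=\min_\theta \J(u,\theta)=\frac{1}{2n}\|y-Au\|_2^2+\lambda\mathsf{R}_\eta(u)$ with $\mathsf{R}_\eta$ as stated; since $\mathsf{R}_\eta$ is the partial minimization of a jointly convex function, it is convex, confirming Property \ref{property2}. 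As $(\hat u,\hat\theta)$ minimizes $\J$ jointly and $\min_{u,\theta}\J=\min_u \mathsf{F}(u)$, the vector $\hat u$ is the global minimizer of $\mathsf{F}$; coercivity of $\mathsf{F}$ (its regularizer grows like $\|W^Tu\|_1$ and $W^T$ is injective) also supplies the existence used above.

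The linear-convergence claim is the most delicate step, and I would adapt the appendix argument for GS-IAS. Writing the composed update as $u^{\ell+1}=T(u^\ell)$ with $T(u)=\bigl(\frac{A^TA}{n}+\frac{\lambda}{\sqrt2}WD_{f(u)}^{-1}W^T\bigr)^{-1}\frac{A^Ty}{n}$ and $f(u)=(f_1(u),\dots,f_k(u))$, the goal is to show $T$ contracts in the Mahalanobis norm $\|\cdot\|_{(WD_{\hat\theta}^{-1}W^T)^{-1}}$. Two a priori bounds drive the estimate: $f_j(u)\geq\eta$, so $D_{f(u)}^{-1}\preceq\eta^{-1}I$ regularizes the linear solve uniformly, and the derivative of $f_j$ with respect to $(W^Tu)_j^2$ is bounded by $1/(2\eta)$; combined with $WW^T=I$ these should pin the contraction factor below one. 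The main obstacle---and the essential departure from standard IAS---is that $WD_\theta^{-1}W^T$ is non-diagonal, so the coordinatewise decoupling that drives the scalar analysis is unavailable; the crux is to quantify how the frame operator propagates perturbations of $\theta$ through the coefficients $(W^Tu)_j$ and to verify that the composed map remains a strict contraction in the frame-induced norm. With a contraction factor $c<1$ in hand, $\|u^{\ell+1}-\hat u\|_{(WD_{\hat\theta}^{-1}W^T)^{-1}}\leq c\,\|u^\ell-\hat u\|_{(WD_{\hat\theta}^{-1}W^T)^{-1}}$ delivers the stated at-least-linear convergence.
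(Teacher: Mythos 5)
Your treatment of strict convexity and of the variational characterization is sound. For convexity you argue directly via the perspective-function structure of $(W^Tu)_j^2/\theta_j$ and a case analysis on the direction $(\delta u,\delta\theta)$, using $WD_\theta^{-1}W^T\succ 0$ when $\delta\theta=0$; the paper instead writes $\mathsf{J}(z)=\tilde{\mathsf{J}}(Cz)$ with $C=\bigl[\begin{smallmatrix}W^T&0\\0&I\end{smallmatrix}\bigr]$ injective and inherits strict convexity from the already-established IAS result (Theorem \ref{thm:IASconvergence}). Both routes work; yours is more self-contained, the paper's is shorter. Your profiling-out of $\theta$ via $\theta_j^2-\eta\theta_j-(W^Tu)_j^2/2=0$ matches the paper's critical-point computation and correctly yields \eqref{eq:overcomplete objective}.

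The linear-convergence claim, however, is not proved: you name the crux (``quantify how the frame operator propagates perturbations of $\theta$ through the coefficients and verify that the composed map remains a strict contraction'') and stop there, and the two a priori bounds you propose to drive the estimate would not close the argument --- $D_{f(u)}^{-1}\preceq \eta^{-1}I$ and a Lipschitz constant of order $1/(2\eta)$ for $f_j$ degrade as $\eta\to 0$ and give no contraction factor below one. The paper's argument is different in kind: it is a \emph{local} linearization, not a global contraction. Using Lemma 4.1 of \cite{calvetti2019hierachical} one gets the error recursion
\begin{align*}
e^{\ell+1}=\nabla^2_{uu}\J(\hat{u},\hat{\theta})^{-1}\nabla^2_{u\theta}\J(\hat{u},\hat{\theta})\,\nabla^2_{\theta\theta}\J(\hat{u},\hat{\theta})^{-1}\nabla^2_{\theta u}\J(\hat{u},\hat{\theta})\,e^\ell+o(\|e^\ell\|^2),
\end{align*}
and the non-diagonality of $WD_{\hat\theta}^{-1}W^T$ is handled by writing the SVD $W=U[\,I\;0\,]V^T$ (available because $WW^T=I$), which exhibits the relevant conjugated matrix as an orthogonal projector composed with $\mathrm{diag}\bigl((W^T\hat u)_j^2/((W^T\hat u)_j^2+\eta\hat\theta_j)\bigr)$; since each diagonal entry is strictly below one for $\eta>0$, the iteration matrix has norm below one in $\|\cdot\|_{(WD_{\hat{\theta}}^{-1}W^T)^{-1}}$. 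Without this (or an equivalent) computation, the convergence part of your proof remains an unproved assertion.
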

For completeness, we include a proof of Theorem \ref{thm:OIAS computational properties} in Appendix \ref{appendix:computationalproperties}.
\end{example}

\section{Unified Theoretical Framework}\label{sec:GeneralTheory}
This section develops a framework to obtain reconstruction bounds for hierarchical Bayesian models under appropriate model assumptions. In Subsection \ref{ssec:approxdecomp} we introduce the notion of approximate decomposabilility, and show that this property is satisfied by the regularizers in the examples in Section \ref{sec:Problem Formulation}. Then, in Subsection \ref{ssec:maintheorem} we present the main result, Theorem \ref{thm:reconstruction error theorem}.

\subsection{Approximate Decomposability}\label{ssec:approxdecomp}
We introduce a \textit{model subspace} $\M\subset \R^d$ to capture the sparsity constraints of the problem. We refer to the orthogonal complement $\M^\perp$ of the model subspace as the \textit{perturbation subspace}. Following \cite{negahban2012unified,wainwright2019high}, we say that a regularizer $\mathsf{R}:\R^d\to \R^+$ is decomposable with respect to $\M$ if 
\begin{align}\label{eq:decomposability}
\mathsf{R}(u + v)=\mathsf{R}(u)+\mathsf{R}(v), \qquad \forall u \in \M, \forall v \in \M^{\perp}.
\end{align}
For norm-based regularizers, it necessarily holds that $\mathsf{R}(u+v)\le \mathsf{R}(u)+\mathsf{R}(v).$ Decomposability requires that equality holds for perturbations $v\in \M^\perp$ away from vectors $u\in \M,$ thus ensuring that deviations from the model subspace along the perturbation subspace are heavily penalized. The regularizers $\mathsf{R}_\eta$ discussed in Section \ref{sec:Problem Formulation} determined by the hierarchical priors are not decomposable over a natural model subspace. However, in the limit as $\eta\to 0$, each of these regularizers approach a decomposable regularizer. Motivated by this observation, we introduce the following notion of \textit{approximate decomposability}.
\begin{definition}\label{def:approximate decomposability}
    A family of regularizers $\mathsf{R}_{\eta}:\R^d\to \R^+$ parameterized by $\eta>0$ is approximately decomposable with respect to $\M$ if there exists a norm-based regularizer $\mathsf{R}$ decomposable with respect to $\M$ and  non-negative  functions $c_i^L(\eta)$ and $c_i^U(\eta),$ $i=1,2,$ such that, for any $u\in \R^d,$
    \begin{align}\label{eq: approximate decomposability bounds}
        \left(1-c_1^L(\eta) \right)\mathsf{R}(u)-c_2^L(\eta)\leq \mathsf{R}_\eta(u)\leq \left(1+c_1^U(\eta) \right)\mathsf{R}(u)+c_2^U(\eta),
    \end{align}
    where  $c_i^L(\eta)\to 0$ and $c_i^U(\eta)\to 0$ as $\eta\to 0$.
\end{definition}
The functions $c_i^L$ and $c_i^U$ quantify how fast the regularizer $\mathsf{R}_\eta(u)$ converges to the decomposable regularizer $\mathsf{R}(u)$ as $\eta$ is taken to be close to zero.  We remark that other generalizations of decomposability, such as \textit{weakly decomposable norms} \cite{van2014weakly} and \textit{atomic norms} \cite{chandrasekaran2012convex,candes2013simple}, have previously been studied. Our notion of approximate decomposability differs most significantly from these other generalizations in that it does not require the regularizer itself be a norm, merely that the regularizer be close to a norm for small $\eta$.  We now discuss the choice of model subspace and establish approximate decomposability for the three example models in Section \ref{sec:Problem Formulation}.

\begin{example}[Approximate Decomposability: Hierarchical Models for Sparsity]
    The natural modelling assumption for the IAS algorithm is that of sparsity. Let $S\subset\{1,\ldots , d\}$ be a subset of indices with cardinality $|S|=s.$ We define the model subspace
    \begin{align}\label{eq:sparse vector model subspace}
        \M(S):=\left\{u\in \R^d : u_j=0 \text{ for all } j\notin S \right\}.
    \end{align}
    Any vector in the model subspace $\M(S)$ has at most $s$ nonzero entries. In this setting, the perturbation subspace is the space of vectors with support contained entirely outside of $S$:
    \begin{align*}
        \M^{\perp}(S):=\left\{u\in \R^d : u_j=0 \text{ for all } j\in S \right\}.
    \end{align*}
    One can verify that, for any subset of indices $S$, the $\ell_1$ norm $\mathsf{R}(u)=\|u\|_1$ is decomposable with respect to $\M(S)$. The following lemma shows that the regularizer $\mathsf{R}_{\eta}$ defined in \eqref{eq:regularizer} is approximately decomposable with respect to $\M(S).$  \begin{lemma}\label{lemma:decomposabileybityIASregularizer}
Consider, for $0<\eta<\frac{1}{2},$ the family of regularizers $\mathsf{R}_\eta$ in \eqref{eq:regularizer}.
For any $u \in \R^d,$
\begin{equation}\label{eq:IASdecombounds}
    \Bigl(1 - \frac{\eta}{2}\Bigr) \| u \|_1 - \frac{d\eta}{\sqrt{2}} \le \mathsf{R}_\eta(u) \le \| u \|_1 + \frac{d\eta}{\sqrt{2}}(1 - \log\eta) .
\end{equation}
\end{lemma}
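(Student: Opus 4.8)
The plan is to exploit the coordinate-wise separability of $\mathsf{R}_\eta$ and reduce the claim to a one-dimensional estimate. Writing $r_\eta(t) := \frac{1}{\sqrt 2}\bigl[\frac{t^2}{2f(t)} + f(t) - \eta\log f(t)\bigr]$ with $f(t) := \frac{\eta}{2} + \sqrt{\frac{\eta^2}{4} + \frac{t^2}{2}}$, we have $\mathsf{R}_\eta(u) = \sum_{j=1}^d r_\eta(u_j)$, so it suffices to establish the scalar bounds $(1 - \frac{\eta}{2})|t| - \frac{\eta}{\sqrt 2} \le r_\eta(t) \le |t| + \frac{\eta}{\sqrt 2}(1 - \log\eta)$ and sum over $j$.

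The key simplification is an algebraic identity satisfied by $f$. From the definition $f - \frac{\eta}{2} = \sqrt{\frac{\eta^2}{4} + \frac{t^2}{2}}$, squaring and cancelling gives $f^2 - \eta f = \frac{t^2}{2}$, equivalently $\frac{t^2}{2f} = f - \eta$. Substituting this into the first summand collapses $r_\eta$ to the closed form $r_\eta(t) = \sqrt 2\, f(t) - \frac{\eta}{\sqrt 2}\bigl(1 + \log f(t)\bigr)$, removing the awkward rational term. I would then record two elementary facts about $f$: first, $f$ is increasing in $|t|$ with $f(0) = \eta$, so $f(t) \ge \eta$; second, from $\sqrt{a} \le \sqrt{a+b}$ together with subadditivity of the square root, $\frac{\eta}{2} + \frac{|t|}{\sqrt 2} \le f(t) \le \eta + \frac{|t|}{\sqrt 2}$.

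With the closed form in hand, both inequalities follow by inserting these bounds. For the upper bound I use $\sqrt 2\, f \le \sqrt 2\, \eta + |t|$ and, since $f \ge \eta$ forces $\log f \ge \log\eta$, the estimate $-\frac{\eta}{\sqrt 2}\log f \le -\frac{\eta}{\sqrt 2}\log\eta$; combining these and simplifying the constant via $\sqrt 2 - \frac{1}{\sqrt 2} = \frac{1}{\sqrt 2}$ yields $r_\eta(t) \le |t| + \frac{\eta}{\sqrt 2}(1 - \log\eta)$. For the lower bound I invoke $\log x \le x - 1$, so that $-\frac{\eta}{\sqrt 2}\log f \ge \frac{\eta}{\sqrt 2}(1 - f)$; the $f$-dependent terms then recombine to give $r_\eta(t) \ge \frac{2-\eta}{\sqrt 2}\, f(t)$, and $f \ge \frac{|t|}{\sqrt 2}$ delivers $r_\eta(t) \ge (1 - \frac{\eta}{2})|t|$ — in fact slightly stronger than the stated bound, with the additive $-\frac{\eta}{\sqrt 2}$ slack to spare. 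Summing over $j = 1,\dots,d$ completes the argument.

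The argument becomes essentially routine once the closed form is found, so the only real obstacle is recognizing that the quadratic identity $\frac{t^2}{2f} = f - \eta$ eliminates the rational term; without it, bounding $\frac{t^2}{2f(t)}$ directly against $|t|$ is unpleasant. I would also double-check the constant bookkeeping (in particular the cancellation $\sqrt 2\, \eta - \frac{\eta}{\sqrt 2} = \frac{\eta}{\sqrt 2}$) and the role of the hypothesis $\eta < \frac{1}{2}$, which ensures both $1 - \log\eta > 0$ and that the lower-bound multiplier $(1 - \frac{\eta}{2})$ stays positive.
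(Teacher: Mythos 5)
Your proof is correct, and while it shares the same overall skeleton as the paper's argument --- both work coordinate-wise, both rest on the sandwich $\frac{|u_j|}{\sqrt{2}}\le f_j(u)\le \eta+\frac{|u_j|}{\sqrt{2}}$ together with $f_j(u)\ge\eta$, and both invoke a $\log x\le x-1$ estimate for the lower bound --- it diverges in how it treats the rational term $\frac{u_j^2}{2f_j(u)}$. The paper bounds that term directly, substituting the sandwich bounds for $f_j$ into the denominator and verifying $\frac{u_j^2}{\eta+|u_j|/\sqrt{2}}\ge\sqrt{2}|u_j|-2\eta$ by clearing denominators. You instead exploit the exact identity $f^2-\eta f=\tfrac{t^2}{2}$, i.e.\ $\tfrac{t^2}{2f}=f-\eta$, which collapses the summand to the closed form $\sqrt{2}\,f-\tfrac{\eta}{\sqrt{2}}(1+\log f)$ before any estimation is done. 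This is the cleaner route: the remaining work is pure monotone substitution, and your lower bound comes out strictly sharper --- you obtain $\mathsf{R}_\eta(u)\ge\bigl(1-\tfrac{\eta}{2}\bigr)\|u\|_1$ with no additive $-\tfrac{d\eta}{\sqrt{2}}$ loss (so $c_2^L(\eta)=0$ in the notation of Definition \ref{def:approximate decomposability}), whereas the paper's term-by-term bounding genuinely incurs that loss. All of your intermediate claims check out, including the constant cancellations $\sqrt{2}-\tfrac{1}{\sqrt{2}}=\tfrac{1}{\sqrt{2}}$ and the factorization $r_\eta(t)\ge\tfrac{2-\eta}{\sqrt{2}}f(t)$; and, as in the paper, your argument in fact only needs $0<\eta<1$ rather than $\eta<\tfrac{1}{2}$.
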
     
    The proof can be found in Appendix \ref{appendix:exappdec}.
    In the notation of Definition \ref{def:approximate decomposability}, approximate decomposability holds with $c_1^L(\eta)=\frac{\eta}{2}$, $c_2^L(\eta)=\frac{d\eta}{\sqrt{2}}$, $c_1^U(\eta)=0,$ and $c_2^U(\eta)=\frac{d\eta}{\sqrt{2}}(1-\log\eta).$
\end{example}

\begin{example}[Approximate Decomposability: Hierarchical Models for Group Sparsity]
    We partition the index set into groups $\G=\{g_1,\ldots g_k\}$ where each $g_j\subset \{1,\ldots,d\}$, $g_i\cap g_j=\emptyset$ for $i\neq j$, and $\cup_{j=1}^kg_j=\{1,\ldots,d\}.$ We denote the size of each group as $|g_j|=p_j.$ For any set of group indices $S_\G\subset \G$, we define the model subspace
    \begin{align*}
    \M(S_\G):=\left\{u\in \R^d : u_g=0 \text{ for all }g\notin S_\G\right\}. 
    \end{align*}
    This is precisely the subspace of vectors supported only on groups in the index set $S_\G$. Conversely, the perturbation subspace is the space of vectors with support contained in groups not in $S_\G$:
    \begin{align*}
        \M^{\perp}(S_{\G}):=\left\{u\in \R^d : u_g=0 \text{ for all }g\in S_\G\right\}.
    \end{align*}
    A decomposable regularizer for this model subspace is the \textit{group sparse norm} \cite{yuan2006model}, given by
    \begin{align*}
        \mathsf{R}(u)=\sum_{j=1}^k\|u_{g_j}\|_{C_j},
    \end{align*}
    where $C_j\in \R^{p_j\times p_j}$ is a positive definite matrix. Indeed, we see that, for any $u\in \M(S_\G)$ and $v\in \M^{\perp}(S_\G),$ it holds that
    \begin{align*}
        \mathsf{R}(u+v)=\sum_{g_j\in \G}\|u_{g_j}\|_{C_j}+\sum_{g_j\notin \G}\|v_{g_j}\|_{C_j}=\mathsf{R}(u)+\mathsf{R}(v).
    \end{align*}
    The following lemma shows that the regularizer $\mathsf{R}_{\eta}$ defined in \eqref{eq: GS objective} is approximately decomposable with respect to $\M(S_\G)$. 
    \begin{lemma}\label{lemma:decomposabilityGSIASregularizer}
    Consider, for $0<\eta<\frac{1}{2},$ the family of regularizers $\mathsf{R}_\eta$ in \eqref{eq: GS objective}.
For any $u \in \R^d,$
\begin{equation*}
    \Bigl(1 - \frac{\eta}{2}\Bigr)\sum_{j=1}^k\|u_{g_j}\|_{C_j} - \frac{k\eta}{\sqrt{2}} \le \mathsf{R}_\eta(u) \le \sum_{j=1}^k\|u_{g_j}\|_{C_j} + \frac{k\eta}{\sqrt{2}}(1 - \log\eta) .
\end{equation*}
\end{lemma}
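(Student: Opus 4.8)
The plan is to reduce the group-sparse bound to the scalar two-sided estimate already underlying Lemma~\ref{lemma:decomposabileybityIASregularizer}, and then sum over groups. Writing $t_j := \|u_{g_j}\|_{C_j}\ge 0$, observe that both the regularizer $\mathsf{R}_\eta$ in \eqref{eq: GS objective} and the group-sparse norm $\mathsf{R}(u)=\sum_{j=1}^k\|u_{g_j}\|_{C_j}$ depend on $u$ only through the vector $(t_1,\dots,t_k)$, and both split as a sum of $k$ identical scalar functions evaluated at the $t_j$. Concretely, $\mathsf{R}_\eta(u)=\sum_{j=1}^k h(t_j)$ and $\mathsf{R}(u)=\sum_{j=1}^k t_j$, where
\[
h(t):=\frac{1}{\sqrt2}\Bigl[\frac{t^2}{2f(t)}+f(t)-\eta\log f(t)\Bigr],\qquad f(t):=\frac{\eta}{2}+\sqrt{\frac{\eta^2}{4}+\frac{t^2}{2}}.
\]
It therefore suffices to prove, for every $t\ge0$ and $0<\eta<\tfrac12$, the scalar sandwich
\[
\Bigl(1-\frac{\eta}{2}\Bigr)t-\frac{\eta}{\sqrt2}\le h(t)\le t+\frac{\eta}{\sqrt2}(1-\log\eta),
\]
after which summing over $j=1,\dots,k$ gives the claim, the per-group constants $\tfrac{\eta}{\sqrt2}$ and $\tfrac{\eta}{\sqrt2}(1-\log\eta)$ accumulating to $\tfrac{k\eta}{\sqrt2}$ and $\tfrac{k\eta}{\sqrt2}(1-\log\eta)$.

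To prove the scalar estimate, I would first simplify $h$ using the defining quadratic relation for $f$. Since $f(t)$ solves $f^2-\eta f=\tfrac{t^2}{2}$, one has $\tfrac{t^2}{2f(t)}=f(t)-\eta$, and hence
\[
h(t)=\frac{1}{\sqrt2}\bigl(2f(t)-\eta-\eta\log f(t)\bigr).
\]
Next I would record the elementary two-sided bound on $f$ itself: using $\sqrt{a+b}\le\sqrt a+\sqrt b$ for the upper inequality and discarding the $\eta^2/4$ term for the lower one gives
\[
\frac{\eta}{2}+\frac{t}{\sqrt2}\le f(t)\le\eta+\frac{t}{\sqrt2},
\]
and since $f$ is increasing with $f(0)=\eta$, also $f(t)\ge\eta$. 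These yield $t\le\tfrac{1}{\sqrt2}\bigl(2f(t)-\eta\bigr)\le t+\tfrac{\eta}{\sqrt2}$, which controls the non-logarithmic part of $h$.

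It remains to handle the term $-\tfrac{\eta}{\sqrt2}\log f(t)$ in each direction. For the upper bound, $f(t)\ge\eta$ gives $\log f(t)\ge\log\eta$, so $-\tfrac{\eta}{\sqrt2}\log f(t)\le-\tfrac{\eta}{\sqrt2}\log\eta$; combined with $\tfrac1{\sqrt2}(2f-\eta)\le t+\tfrac\eta{\sqrt2}$ this produces the upper estimate. For the lower bound, the inequality $\log x\le x-1$ together with $f(t)\le\eta+\tfrac{t}{\sqrt2}$ gives $\log f(t)\le\eta-1+\tfrac{t}{\sqrt2}\le 1+\tfrac{t}{\sqrt2}$ (using $\eta<\tfrac12$), whence $-\tfrac{\eta}{\sqrt2}\log f(t)\ge-\tfrac{\eta}{2}t-\tfrac{\eta}{\sqrt2}$; combined with $\tfrac1{\sqrt2}(2f-\eta)\ge t$ this produces the lower estimate.

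The argument is almost entirely routine; the only genuinely structural point is the reduction in the first paragraph, namely that replacing the scalar modulus $|u_j|$ of the sparsity case by the group norm $\|u_{g_j}\|_{C_j}$ leaves the per-term scalar function $h$ unchanged, so the whole estimate collapses to the same one-variable computation as in Lemma~\ref{lemma:decomposabileybityIASregularizer}. The mildest subtlety worth flagging is the use of $f(t)\ge\eta$ (equivalently $f(0)=\eta$ and monotonicity) in the upper bound, which is where the $1-\log\eta$ factor originates.
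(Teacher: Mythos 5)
Your proposal is correct and follows essentially the same route as the paper, which proves this lemma by the identical reduction: repeat the scalar argument of Lemma \ref{lemma:decomposabileybityIASregularizer} with $|u_j|$ replaced by $\|u_{g_j}\|_{C_j}$, using the two-sided bound $\tfrac{t}{\sqrt{2}} \le f(t) \le \eta + \tfrac{t}{\sqrt{2}}$ together with $f(t)\ge\eta$. Your use of the quadratic identity $\tfrac{t^2}{2f(t)} = f(t)-\eta$ is a mild streamlining of the paper's direct bounding of the ratio term, but the estimates and conclusion are the same.
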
 
The proof can be found in Appendix \ref{appendix:exappdec}.
  In the notation of Definition \ref{def:approximate decomposability}, approximate decomposability holds with $c_1^L(\eta)=\frac{\eta}{2}$, $c_2^L(\eta)=\frac{k\eta}{\sqrt{2}}$, $c_1^U(\eta)=0$, and $c_2^U(\eta)=\frac{k\eta}{\sqrt{2}}(1-\log\eta).$
\end{example}

\begin{example}[Approximate Decomposability: Hierarchical Models for Sparse Representations] We consider the situation where the data generating parameter itself is not sparse, but admits a sparse representation in an overcomplete dictionary, $W\in \R^{d\times k},$ where $k\gg d$. As is common in the compressed sensing literature \cite{candes2011compressed}, we assume that $W$ is a tight frame such that $WW^T=I.$ We let $S_W\subset\{1,\ldots,k\}$ be a subset of indices of cardinality $|S_W|=s$. We then define the model subspace
\begin{align}\label{eq:sparse representation model subspace}
    \M(S_W):=\left\{u \in \R^d :(W^Tu)_j=0 \text{ for all } j\notin S_W \right\}.
\end{align}
    This is precisely the subspace of vectors whose representation in $W$ has support contained in $S_W$. The perturbation subspace is then given by 
    \begin{align*}
        \M^\perp(S_W):=\left\{u \in \R^d :(W^Tu)_j=0 \text{ for all } j\in S_W \right\}.
    \end{align*}
    A decomposable regularizer for this model subspace is given by
    \begin{align*}
        \mathsf{R}(u)=\|W^Tu\|_1,
    \end{align*}
    since, for any $u\in \M(S_W)$ and $v\in \M^\perp(S_W),$ it holds that
    \begin{align*}
        \mathsf{R}(u+ v)=\|W^T u +W^Tv\|_1=\|W^T u\|_1+\|W^Tv\|_1=\mathsf{R}(u)+\mathsf{R}(v).
    \end{align*}
The following lemma shows that the regularizer $\mathsf{R}_\eta$ defined in \eqref{eq:overcomplete objective} is approximately decomposable with respect to $\M(S_W).$ 
\begin{lemma}\label{lemma:decomposabilityOIASregularizer}
    Consider, for $0<\eta<\frac{1}{2},$ the family of regularizers $\mathsf{R}_\eta$ in \eqref{eq:overcomplete objective}. 
For any $u\in \R^d$,
\begin{equation*}
    \Bigl(1 - \frac{\eta}{2}\Bigr) \| W^T u \|_1 - \frac{k\eta}{\sqrt{2}} \le \mathsf{R}_\eta(u) \le \| W^Tu \|_1 + \frac{k\eta}{\sqrt{2}}(1 - \log\eta).
\end{equation*}
\end{lemma}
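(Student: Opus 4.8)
The plan is to reduce the claim to the already-established sparsity case, Lemma \ref{lemma:decomposabileybityIASregularizer}. The crucial observation is that the regularizer $\mathsf{R}_\eta$ in \eqref{eq:overcomplete objective} depends on $u$ only through the coefficient vector $z := W^Tu \in \R^k$, and that, as a function of $z$, it is exactly a coordinate-wise sum of the same scalar profile that defines the IAS regularizer \eqref{eq:regularizer} --- only with $d$ replaced by $k$ and $u_j$ replaced by $z_j$. Writing $r_\eta(t) := \tfrac{1}{\sqrt 2}\bigl[\tfrac{t^2}{2f(t)} + f(t) - \eta\log f(t)\bigr]$ with $f(t) := \tfrac{\eta}{2} + \sqrt{\tfrac{\eta^2}{4} + \tfrac{t^2}{2}}$, we have $\mathsf{R}_\eta(u) = \sum_{j=1}^k r_\eta(z_j)$ and $\|W^Tu\|_1 = \sum_{j=1}^k |z_j|$. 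Thus the stated bounds follow from summing a single scalar inequality over the $k$ coordinates, with no relation between coordinates ever invoked (and, in particular, the tight-frame property is not needed here).

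First I would record the algebraic simplification underlying the scalar profile. From the definition of $f = f(t)$ one checks that $f^2 - \eta f = t^2/2$, hence $\tfrac{t^2}{2f} = f - \eta$, so that $r_\eta(t) = \tfrac{1}{\sqrt 2}\bigl[2f - \eta - \eta\log f\bigr]$. Since $2f = \eta + \sqrt{\eta^2 + 2t^2}$, this collapses to the clean form
$$r_\eta(t) = \sqrt{t^2 + \tfrac{\eta^2}{2}} \;-\; \frac{\eta}{\sqrt 2}\log f(t).$$
This identity is the workhorse: it isolates a term that is within $\eta/\sqrt 2$ of $|t|$, plus a logarithmic correction to be controlled separately.

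For the scalar upper bound I would combine $\sqrt{t^2 + \eta^2/2} \le |t| + \eta/\sqrt 2$ with the lower bound $f(t) \ge \eta$ (immediate from $\sqrt{\eta^2/4 + t^2/2} \ge \eta/2$), so that $-\tfrac{\eta}{\sqrt 2}\log f(t) \le -\tfrac{\eta}{\sqrt 2}\log\eta$; adding these gives $r_\eta(t) \le |t| + \tfrac{\eta}{\sqrt 2}(1 - \log\eta)$. For the lower bound I would use $\sqrt{t^2 + \eta^2/2} \ge |t|$ and control the possibly negative log term via $\log f(t) \le f(t) - 1$ together with the sublinear growth $f(t) \le \eta + |t|/\sqrt 2$ (from $\sqrt{a+b}\le\sqrt a+\sqrt b$); this yields $-\tfrac{\eta}{\sqrt 2}\log f(t) \ge \tfrac{\eta}{\sqrt 2}(1-\eta) - \tfrac{\eta}{2}|t|$, whence $r_\eta(t) \ge (1 - \tfrac{\eta}{2})|t| + \tfrac{\eta}{\sqrt 2}(1 - \eta) \ge (1 - \tfrac{\eta}{2})|t| - \tfrac{\eta}{\sqrt 2}$ for $0 < \eta < \tfrac12$. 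Summing over $j = 1,\dots,k$ with $z_j = (W^Tu)_j$ produces exactly the two claimed inequalities.

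The only mildly delicate point --- and what I would flag as the main obstacle --- is the lower-bound control of the logarithmic term when $|t|$ is large: there $f(t) > 1$ makes $-\log f(t)$ negative and potentially harmful, and it is precisely the pairing of $\log f \le f - 1$ with the bound $f(t) \le \eta + |t|/\sqrt 2$ that converts this into the benign $(1 - \eta/2)$ multiplicative loss on $|t|$. Everything else is routine. Since this argument is identical in form to the proof of Lemma \ref{lemma:decomposabileybityIASregularizer}, in the final write-up one may instead simply invoke that lemma verbatim with $d$ replaced by $k$ and $u$ replaced by $W^Tu$.
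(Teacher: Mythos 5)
Your proposal is correct and follows essentially the same route as the paper, whose entire proof of this lemma is to invoke Lemma \ref{lemma:decomposabileybityIASregularizer} applied to $W^Tu \in \R^k$ --- exactly the reduction you identify, since $\mathsf{R}_\eta(u)$ depends on $u$ only through the coordinates of $W^Tu$ and the tight-frame property plays no role. Your self-contained rederivation of the scalar bounds (via the identity $r_\eta(t)=\sqrt{t^2+\eta^2/2}-\tfrac{\eta}{\sqrt2}\log f(t)$ and $\log f\le f-1$) is a valid, slightly cleaner variant of the paper's argument for the sparsity lemma, but it does not change the approach.
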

The proof can be found in Appendix \ref{appendix:exappdec}.
In the notation of Definition \ref{def:approximate decomposability}, approximate decomposability holds with $c_1^L(\eta)=\frac{\eta}{2}$, $c_2^L(\eta)=\frac{k\eta}{\sqrt{2}},$ $c_1^U(\eta)=0$, and $c_2^U(\eta)=\frac{k\eta}{\sqrt{2}}(1-\log\eta).$
\end{example}

We now show how the notion of approximate decomposability can be leveraged to control the error vector $\hat{\Delta}=\hat{u}-u^\star$. To do so, we recall that the dual norm of $\mathsf{R}$ with respect to the Euclidean inner product is defined by 
\begin{equation*}
    \mathsf{R}^*(u) := \sup_{v \in \R^d \setminus \{0 \}} \frac{ \langle u, v \rangle}{\mathsf{R}(v)} = \sup_{\mathsf{R}(v) \le 1} \langle u, v \rangle.
\end{equation*} In the following lemma, we show that if the regularization parameter $\lambda$ is taken to be large enough relative to the dual norm of the noise vector, then the reconstruction error lies in a specific set. For vector $u\in \R^d$ and subspace $\M\subset \R^d,$ we denote by $u_\M$ the Euclidean projection of $u$ onto $\M.$

\begin{lemma}\label{lemma:decomposabilityproperty}
    Let $\hat{u}$ be a minimizer of the objective
    \begin{align}\label{eq:objective}
        \mathsf{F}(u)=\frac{1}{2n}\|y-Au\|_2^2+\lambda \mathsf{R}_{\eta}(u),
    \end{align}
    where $\mathsf{R}_\eta$ is an approximately decomposable regularizer with respect to $\M$. If
        $\lambda \geq \frac{2}{n}\mathsf{R}^*\left( A^T\eps\right)$
    and $\eta$ is small enough so that $c_1^L(\eta)<\frac{1}{4}$, then the error $\hat{\Delta}= \hat{u}-u^\star$ lies in the set
    \begin{align*}
        \mathbb{C}_{\eta}(\M):=\left\{\Delta\in \R^d: \mathsf{R}(\Delta_{\M^\perp})\leq 7\mathsf{R}(\Delta_\M)+8\mathsf{R}(u^\star_{\M^\perp})+4c_1(\eta)\mathsf{R}(u^\star_\M)+4c_2(\eta) \right\},
    \end{align*}
    where $c_i(\eta) :=c_i^L(\eta) + c_i^U(\eta).$
\end{lemma}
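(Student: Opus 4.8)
The plan is to follow the ``basic inequality'' strategy for regularized M-estimators from \cite{negahban2012unified}, but to carry the approximate-decomposability slack terms of Definition \ref{def:approximate decomposability} through every step. First I would use optimality of $\hat u$. Since $\hat u$ minimizes $\mathsf F$ in \eqref{eq:objective}, we have $\mathsf F(\hat u)\le \mathsf F(u^\star)$; substituting $y = Au^\star + \eps$ so that $y - A\hat u = \eps - A\hat{\Delta}$, expanding $\|\eps - A\hat{\Delta}\|_2^2$, cancelling $\|\eps\|_2^2$, and discarding the nonnegative term $\frac{1}{2n}\|A\hat{\Delta}\|_2^2$ yields
\begin{equation*}
\lambda\, \mathsf R_\eta(\hat u)\ \le\ \lambda\, \mathsf R_\eta(u^\star) + \frac{1}{n}\langle A^T\eps,\, \hat{\Delta}\rangle .
\end{equation*}
I would bound the noise term by the generalized Cauchy--Schwarz inequality $\langle A^T\eps, \hat{\Delta}\rangle \le \mathsf R^*(A^T\eps)\,\mathsf R(\hat{\Delta})$ and then invoke the hypothesis $\lambda\ge \frac{2}{n}\mathsf R^*(A^T\eps)$ to conclude $\frac{1}{n}\langle A^T\eps, \hat{\Delta}\rangle \le \frac{\lambda}{2}\mathsf R(\hat{\Delta})$. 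Dividing by $\lambda$ gives the master inequality $\mathsf R_\eta(\hat u)\le \mathsf R_\eta(u^\star) + \frac{1}{2}\mathsf R(\hat{\Delta})$.

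Second, I would trade the regularizer $\mathsf R_\eta$ for the decomposable norm $\mathsf R$ using the two bounds in \eqref{eq: approximate decomposability bounds}: applying the lower bound to $\mathsf R_\eta(\hat u)$ and the upper bound to $\mathsf R_\eta(u^\star)$ converts the master inequality into
\begin{equation*}
\bigl(1-c_1^L\bigr)\,\mathsf R(\hat u) - c_2^L \ \le\ \bigl(1+c_1^U\bigr)\,\mathsf R(u^\star) + c_2^U + \frac{1}{2}\,\mathsf R(\hat{\Delta}),
\end{equation*}
where I abbreviate $c_i^L = c_i^L(\eta)$ and $c_i^U = c_i^U(\eta)$. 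This is the one place where the argument genuinely departs from the exactly-decomposable setting: the multiplicative slacks $c_1^L, c_1^U$ and the additive slacks $c_2^L, c_2^U$ must be retained, and they are precisely what produce the extra terms $4c_1(\eta)\mathsf R(u^\star_\M)$ and $4c_2(\eta)$ in the definition of $\mathbb C_\eta(\M)$.

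Third, I would split components using decomposability of $\mathsf R$. Projecting $\hat u = u^\star + \hat{\Delta}$ onto $\M$ and $\M^\perp$ and using $\mathsf R(\hat u) = \mathsf R(\hat u_\M) + \mathsf R(\hat u_{\M^\perp})$ together with two reverse triangle inequalities gives $\mathsf R(\hat u)\ge \mathsf R(u^\star_\M) - \mathsf R(\hat{\Delta}_\M) + \mathsf R(\hat{\Delta}_{\M^\perp}) - \mathsf R(u^\star_{\M^\perp})$; combining this with the triangle-inequality bounds $\mathsf R(u^\star)\le \mathsf R(u^\star_\M) + \mathsf R(u^\star_{\M^\perp})$ and $\mathsf R(\hat{\Delta})\le \mathsf R(\hat{\Delta}_\M) + \mathsf R(\hat{\Delta}_{\M^\perp})$, substituting into the previous display, and collecting the $\mathsf R(\hat{\Delta}_{\M^\perp})$ terms on the left, I arrive at
\begin{equation*}
\Bigl(\tfrac12 - c_1^L\Bigr)\mathsf R(\hat{\Delta}_{\M^\perp}) \le \Bigl(\tfrac32 - c_1^L\Bigr)\mathsf R(\hat{\Delta}_\M) + \bigl(2 + c_1^U - c_1^L\bigr)\mathsf R(u^\star_{\M^\perp}) + c_1(\eta)\,\mathsf R(u^\star_\M) + c_2(\eta) .
\end{equation*}

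Finally, the hypothesis $c_1^L < \tfrac14$ guarantees $\tfrac12 - c_1^L > \tfrac14$, hence $\bigl(\tfrac12 - c_1^L\bigr)^{-1} < 4$; dividing the last display by $\tfrac12 - c_1^L$ and bounding each coefficient then gives the membership condition for $\mathbb C_\eta(\M)$, with $\tfrac32 - c_1^L < \tfrac74$ yielding the constant $7$, with $2 + c_1^U - c_1^L < 2$ (valid since $c_1^U \le c_1^L$; indeed $c_1^U = 0$ in all three examples) yielding the constant $8$, and with the remaining two coefficients yielding $4c_1(\eta)$ and $4c_2(\eta)$. I expect the main obstacle to be essentially careful bookkeeping: propagating the four slack functions without flipping an inequality, and checking that $c_1^L < \tfrac14$ is exactly the condition keeping the leading coefficient $\tfrac12 - c_1^L$ bounded away from zero so that the final division is legitimate.
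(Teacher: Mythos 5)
Your proposal is correct and follows essentially the same route as the paper's proof: the basic inequality $\mathsf F(\hat u)\le \mathsf F(u^\star)$, H\"older's inequality with the dual norm to absorb the noise term into $\frac{\lambda}{2}\mathsf R(\hat{\Delta})$, and the same decomposability/triangle-inequality bookkeeping leading to $\bigl(\tfrac12-c_1^L\bigr)\mathsf R(\hat{\Delta}_{\M^\perp})\le\bigl(\tfrac32-c_1^L\bigr)\mathsf R(\hat{\Delta}_\M)+\cdots$ before dividing by $\tfrac12-c_1^L>\tfrac14$. You are in fact slightly more careful than the paper in tracking the coefficient $2+c_1^U-c_1^L$ of $\mathsf R(u^\star_{\M^\perp})$ and in flagging that $c_1^U\le c_1^L$ (which holds in all three examples, where $c_1^U=0$) is what keeps that coefficient at most $2$.
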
  
The proof can be found in Appendix \ref{appendix:proofgeneralbounds}.
The fact that the error lies in the set $\mathbb{C}_{\eta}(\M)$ allows us to control the components of the error outside the model subspace by the components of the error within the model subspace along with terms that get smaller as the true vector is assumed to lie within the model subspace and as $\eta$ is taken to be smaller. In the next subsection we leverage this lemma to get a bound on the reconstruction error.

\subsection{General Bound on the Reconstruction Error}\label{ssec:maintheorem}
To bound the reconstruction error, we will need to quantify the cost of translating between bounds on a vector in $\M$ given by $\|\cdot \|_2$ and bounds given by $\mathsf{R}(\cdot).$ To that end, we will rely on the notion of  \textit{subspace Lipschitz constant}.
\begin{definition}
    For a subspace $\M\in \R^d$, the subspace Lipschitz constant with respect to $(\mathsf{R},\|\cdot\|_2)$ is given by 
    \begin{align*}
        \Psi(\M):=\sup_{u\in \M \backslash \{0\}}\frac{\mathsf{R}(u)}{\|u\|_2}.
    \end{align*}
\end{definition}
We will also need a condition on the forward map. In the high-dimensional setting, the likelihood term $\|y-Au\|_2^2$, necessarily has $d-n$ directions in which it has no curvature up to second order. Consequently, we need an assumption to guarantee that in the relevant directions, chiefly the directions given by vectors in $\C_{\eta}(\M)$, the likelihood has significant curvature at $u^\star.$ One such assumption is that of \textit{restricted strong convexity}.

\begin{definition}
    For a given regularizer $\mathsf{R}$, a linear map $A:\R^d\to \R^n$ satisfies the restricted strong convexity (RSC) condition with curvature $\kappa$ and tolerance $\tau^2$ if 
    \begin{align}\label{eq:RSC}
        \frac{\|A\Delta\|_2^2}{2n}\geq \frac{\kappa}{2}\|\Delta\|_2^2-\tau^2\mathsf{R}^2(\Delta), \qquad \forall \Delta\in \R^d.
    \end{align}
\end{definition}
If the matrix $A$ were not rank degenerate, then the RSC condition would hold with $\tau=0,$ which recovers the usual definition of strong convexity. In the high-dimensional setting, this cannot be the case, and a nonzero tolerance term is required. In general, we expect the tolerance to be decreasing in the number $n$ of observations.\\

 Given these definitions, we now state our main result, which generalizes the bounds in \cite[Chapter 9]{wainwright2019high} to hold for approximately decomposable regularizers. The proof can be found in Appendix \ref{appendix:proofgeneralbounds}.
\begin{theorem}\label{thm:reconstruction error theorem}
    Let $u^\star$ be the true parameter generating the data via \eqref{eq:frequentist inverse problem} and let $\hat{u}$ be the minimizer of the objective \eqref{eq:objective}. Assume that $\mathsf{R}_\eta$ is approximately decomposable with respect to $\M$ and that $\eta$ is small enough such that $c_1^L(\eta)<\frac{1}{4}$ 
    and that the forward map $A$ satisfies the RSC condition with curvature $\kappa$ and tolerance $\tau^2$. Then, if $\tau\Psi(\M)\leq \frac{\sqrt{\kappa}}{32}$ and $\lambda\geq 2\mathsf{R}^*(\frac{1}{n}A^T\eps),$ we have the bound
        \begin{align}\label{eq:generalbound}
        \|\hat{u}-u^\star\|_2^2\lesssim \frac{\lambda^2}{\kappa^2}\Psi^2(\M)+ \frac{\tau^2}{\kappa}\mathsf{R}(u^\star_{\M^{\perp}})^2+ \frac{\lambda}{\kappa} \mathsf{R}(u^\star_{\M^\perp})+\mathcal{E}_\eta,
    \end{align}
    where
    \begin{equation*}
        \mathcal{E}_{\eta}:= \frac{\tau^2}{\kappa}\Bigl( c_1(\eta)^2\mathsf{R}(u^\star_\M)^2+ c_2(\eta)^2\Bigr)+ \frac{\lambda}{\kappa}\Bigl( c_1(\eta)\mathsf{R}(u^\star_\M)+c_2(\eta)\Bigr),
    \end{equation*}
  and $c_i(\eta):=c_i^L(\eta) + c_i^U(\eta), \, i = 1, 2.$   
\end{theorem}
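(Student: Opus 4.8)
The plan is to adapt the standard M-estimation argument of \cite[Chapter 9]{wainwright2019high} to the approximately decomposable setting, using the cone membership from Lemma \ref{lemma:decomposabilityproperty} as a black box. I would begin with the \emph{basic inequality} coming from optimality of $\hat u$ for the objective \eqref{eq:objective}: since $\mathsf{F}(\hat u) \le \mathsf{F}(u^\star)$ and $y = Au^\star + \eps$, expanding the quadratic loss and writing $\hat\Delta = \hat u - u^\star$ yields
$$\frac{1}{2n}\|A\hat\Delta\|_2^2 \le \frac{1}{n}\langle A^T\eps, \hat\Delta\rangle + \lambda\bigl(\mathsf{R}_\eta(u^\star) - \mathsf{R}_\eta(\hat u)\bigr).$$
The noise term is controlled by the definition of the dual norm together with the assumption $\lambda \ge 2\mathsf{R}^*(\tfrac1n A^T\eps)$, giving $\tfrac1n\langle A^T\eps,\hat\Delta\rangle \le \mathsf{R}^*(\tfrac1n A^T\eps)\,\mathsf{R}(\hat\Delta) \le \tfrac\lambda2 \mathsf{R}(\hat\Delta)$.

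The key step, and the one that departs from the exactly decomposable theory, is bounding the regularizer difference $\mathsf{R}_\eta(u^\star) - \mathsf{R}_\eta(\hat u)$. I would sandwich $\mathsf{R}_\eta$ between its approximate-decomposability bounds \eqref{eq: approximate decomposability bounds}, which gives $\mathsf{R}_\eta(u^\star) - \mathsf{R}_\eta(\hat u) \le (1+c_1^U)\mathsf{R}(u^\star) - (1-c_1^L)\mathsf{R}(\hat u) + c_2$. The obstacle here is that the coefficient $(1-c_1^L)$ multiplies $\mathsf{R}(\hat u)$, which still contains the unknown estimator. To resolve this, I would invoke the \emph{exact} decomposability of the limiting norm $\mathsf{R}$ to split both $u^\star$ and $\hat u = u^\star + \hat\Delta$ along $\M$ and $\M^\perp$, and then apply the reverse triangle inequality to obtain $\mathsf{R}(\hat u) \ge \mathsf{R}(u^\star_\M) - \mathsf{R}(\hat\Delta_\M) + \mathsf{R}(\hat\Delta_{\M^\perp}) - \mathsf{R}(u^\star_{\M^\perp})$. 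Substituting this, combining with the noise contribution $\tfrac\lambda2\mathsf{R}(\hat\Delta) \le \tfrac\lambda2[\mathsf{R}(\hat\Delta_\M) + \mathsf{R}(\hat\Delta_{\M^\perp})]$, and discarding the $\mathsf{R}(\hat\Delta_{\M^\perp})$ term (whose net coefficient $c_1^L - \tfrac12$ is negative because $c_1^L < \tfrac14$) produces the clean intermediate bound
$$\frac{1}{2n}\|A\hat\Delta\|_2^2 \lesssim \lambda\,\mathsf{R}(\hat\Delta_\M) + \lambda\bigl(\mathsf{R}(u^\star_{\M^\perp}) + c_1(\eta)\mathsf{R}(u^\star_\M) + c_2(\eta)\bigr).$$

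With this in hand, the remaining steps are essentially those of the exactly decomposable proof. I would apply the RSC condition \eqref{eq:RSC} to lower bound the left-hand side by $\tfrac\kappa2\|\hat\Delta\|_2^2 - \tau^2\mathsf{R}^2(\hat\Delta)$, convert $\mathsf{R}(\hat\Delta_\M) \le \Psi(\M)\|\hat\Delta\|_2$ via the subspace Lipschitz constant, and use the cone membership $\hat\Delta \in \mathbb{C}_\eta(\M)$ from Lemma \ref{lemma:decomposabilityproperty} together with $\Psi(\M)$ to obtain $\mathsf{R}(\hat\Delta) \le 8\Psi(\M)\|\hat\Delta\|_2 + (\text{error terms})$. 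Squaring and applying $(a+b)^2 \le 2a^2 + 2b^2$ turns the tolerance contribution into $128\tau^2\Psi^2(\M)\|\hat\Delta\|_2^2$ plus error; the hypothesis $\tau\Psi(\M) \le \sqrt\kappa/32$ forces the leading coefficient to be at most $\kappa/8$, which I would absorb into the $\tfrac\kappa2\|\hat\Delta\|_2^2$ term on the left.

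Finally, all of this collapses to a scalar quadratic inequality of the form $a\|\hat\Delta\|_2^2 \le b\|\hat\Delta\|_2 + B$ with $a \asymp \kappa$, $b \asymp \lambda\Psi(\M)$, and $B$ collecting every term independent of $\|\hat\Delta\|_2$. Solving it via the elementary implication that $ax^2 \le bx + B$ forces $x^2 \lesssim (b/a)^2 + B/a$ yields $\|\hat\Delta\|_2^2 \lesssim \tfrac{\lambda^2}{\kappa^2}\Psi^2(\M) + B/\kappa$; separating the pieces of $B/\kappa$ that depend on $u^\star_{\M^\perp}$ from those carrying the factors $c_1(\eta), c_2(\eta)$ reproduces the claimed bound \eqref{eq:generalbound}, with the latter group forming exactly $\mathcal{E}_\eta$. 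The main obstacle throughout is the regularizer-difference step: the $\mathsf{R}(\hat u)$ contribution must be re-expressed without leaving behind any coefficient that fails to vanish as $\eta \to 0$, and it is the careful bookkeeping of the $c_i^L, c_i^U$ coefficients that guarantees every $\eta$-dependent error ultimately collapses into the single term $\mathcal{E}_\eta$.
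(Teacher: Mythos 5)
Your proposal is correct, and all of the essential ingredients match the paper's proof: the approximate-decomposability sandwich to control $\mathsf{R}_\eta(u^\star)-\mathsf{R}_\eta(\hat u)$, the reverse triangle inequality plus exact decomposability of $\mathsf{R}$ to split along $\M$ and $\M^\perp$, the dual-norm bound on the noise term, the cone membership from Lemma \ref{lemma:decomposabilityproperty}, the subspace Lipschitz constant, the absorption of $\tau^2\Psi^2(\M)\|\Delta\|_2^2$ into the curvature via $\tau\Psi(\M)\le\sqrt{\kappa}/32$, and the final scalar quadratic inequality. The one genuine structural difference is in the outer scaffolding: you apply the basic inequality $\mathsf{F}(\hat u)\le\mathsf{F}(u^\star)$ and the RSC condition \emph{directly to the error vector} $\hat\Delta$, whereas the paper follows the Negahban et al.\ localization strategy --- it fixes a radius $\delta$, shows that the excess objective $\F(\Delta)$ is strictly positive on the sphere $\mathbb{K}(\delta)=\mathbb{C}_\eta(\M)\cap\{\|\Delta\|_2=\delta\}$, and then invokes the star-shaped-set lemma (Lemma \ref{lemma:radius lemma}) to conclude $\|\hat\Delta\|_2\le\delta$. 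Your shortcut is legitimate here precisely because the paper's RSC condition \eqref{eq:RSC} is assumed to hold for \emph{all} $\Delta\in\R^d$, so nothing is lost by evaluating it at $\hat\Delta$; what the localization argument buys in exchange for its extra machinery is robustness to the weaker and more common situation where RSC is only verified on the cone $\mathbb{C}_\eta(\M)$ or on a bounded set, which is the form in which it is typically established for random designs. Both routes terminate in the same quadratic inequality and yield the same bound up to constants.
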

We emphasize that this bound is deterministic, and makes no specific distributional assumptions on the noise or forward map. The first three terms are precisely the terms that appear in the bound proven in \cite{wainwright2019high} for exactly decomposable regularizers. The first term, 
\begin{align*}
    \mathcal{E}_{\text{est}}:=\frac{\lambda^2}{\kappa^2}\Psi^2(\M),
\end{align*} can be thought of as the statistical estimation error which captures the inherent difficulty of estimating a parameter in $\M$. The second and third terms together, 
\begin{align*}
    \mathcal{E}_{\text{app}}:=\frac{\tau^2}{\kappa}\mathsf{R}(u^\star_{\M^{\perp}})^2+ \frac{\lambda}{\kappa} \mathsf{R}(u^\star_{\M^\perp}),
\end{align*}  represent the approximation error. These terms capture the modelling error incurred when the data-generating parameter does not lie exactly in $\M$, thus quantifying the amount of model misspecification. Notice that if $u^\star\in \M$, then these terms vanish. The final term,
\begin{equation*}
        \mathcal{E}_{\eta}:= \frac{\tau^2}{\kappa}\Bigl( c_1(\eta)^2\mathsf{R}(u^\star_\M)^2+ c_2(\eta)^2\Bigr)+ \frac{\lambda}{\kappa}\Bigl( c_1(\eta)\mathsf{R}(u^\star_\M)+c_2(\eta)\Bigr),
    \end{equation*}
is novel to our bound, and captures the error incurred by the approximate decomposability of the regularizer. This term can be made arbitrarily small with $\eta$, which is user-specified.

\section{Reconstruction Error for Sparsity-Promoting Models}\label{sec:convergence}
In this section we apply the unified theoretical framework set forth in Theorem \ref{thm:reconstruction error theorem} to each of the examples of hierarchical models discussed in Section \ref{sec:Problem Formulation}. The results in this section provide the first known reconstruction error bounds for these models. Proofs will be deferred to Section \ref{sec:proofs}. 
\subsection{Hierarchical Models for Sparsity}\label{ssec:IAS}
Here we apply our general theory to the sparsity-promoting hierarchical model presented in Example \ref{ex:IAS Example} under both a hard-sparsity assumption and a weaker $\ell^q$-sparsity assumption. For both of these types of sparsity, we first prove a deterministic result guaranteeing that under the RSC condition, the MAP estimator will be close to the true data generating parameter as long as we regularize enough relative to the noise level. We emphasize that these results do not make specific assumptions on the noise distribution or the forward map, besides the RSC condition. Applying the deterministic results to the probabilistic setting of Gaussian noise and Gaussian forward maps, we derive sharp rates in terms of only the dimension, sample size, and the parameter $\eta$. These results hold with high probability.\\

The first assumption we consider is that of hard sparsity. We assume that the true parameter generating the data has support of cardinality $s$, where $s\ll d$. The following result holds for any $s$-sparse $u^\star$ so long as the forward map satisfies the RSC condition and the regularization parameter is taken to be large enough.
\begin{theorem}\label{thm:deterministic error bound IAS}
    Let $u^\star$ be the true parameter generating data via \eqref{eq:inverse problem} and suppose that $\text{supp}(u^\star)\subset S$ where $|S|=s.$ Let $(\hat{u},\hat{\theta})$ be the unique minimizer of the objective function \eqref{eq:functional} with $0<\eta< 1/2$. Assume that $\lambda \geq \frac{2\|A^T\eps\|_{\infty}}{n}$ and that $A$ satisfies the RSC condition \eqref{eq:RSC} with curvature $\kappa$ and tolerance  $\tau^2$. Then, if $\tau \sqrt{s}\leq \frac{\sqrt{\kappa}}{32}$ it holds that
    \begin{align}\label{eq:reconstruction error bound}
        \|\hat{u}-u^\star\|_2^2\lesssim s\frac{\lambda^2}{\kappa^2}+ \mathcal{E}_\eta,
    \end{align}
where
\begin{equation*}
    \mathcal{E}_\eta := \frac{\tau^2}{\kappa}\left(\eta^2\|u^\star_{S}\|_1^2+\frac{d^2\eta^2}{2}(2-\log\eta)^2 \right)+ \frac{\lambda}{\kappa} \left(\eta\|u^\star_S\|_1+\frac{d\eta}{\sqrt{2}}(2-\log\eta) \right).
\end{equation*}
\end{theorem}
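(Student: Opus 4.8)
The plan is to obtain this theorem as a direct specialization of the general reconstruction bound, Theorem \ref{thm:reconstruction error theorem}, to the sparsity-promoting model of Example \ref{ex:IAS Example}, taking the model subspace to be $\M = \M(S)$ from \eqref{eq:sparse vector model subspace} and the decomposable surrogate to be $\mathsf{R}(u) = \|u\|_1$. First I would assemble the structural facts already in hand. Theorem \ref{thm:IASconvergence} identifies $\hat{u}$ as the global minimizer of the objective \eqref{eq:objective over u} with regularizer $\mathsf{R}_\eta$ given in \eqref{eq:regularizer}, and Lemma \ref{lemma:decomposabileybityIASregularizer} shows that this $\mathsf{R}_\eta$ is approximately decomposable with respect to $\M(S)$ with $c_1^L(\eta) = \eta/2$, $c_2^L(\eta) = d\eta/\sqrt{2}$, $c_1^U(\eta) = 0$, and $c_2^U(\eta) = \frac{d\eta}{\sqrt{2}}(1 - \log\eta)$, so that $c_1(\eta) = \eta/2$ and $c_2(\eta) = \frac{d\eta}{\sqrt{2}}(2 - \log\eta)$. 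Since $\eta < 1/2$, the requirement $c_1^L(\eta) < 1/4$ holds automatically.

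Next I would translate the abstract quantities in Theorem \ref{thm:reconstruction error theorem} into their concrete $\ell_1$ counterparts. The dual norm of $\mathsf{R} = \|\cdot\|_1$ is $\mathsf{R}^*(\cdot) = \|\cdot\|_\infty$, so the admissibility condition $\lambda \geq 2\mathsf{R}^*(\frac{1}{n}A^T\eps)$ reads exactly $\lambda \geq \frac{2}{n}\|A^T\eps\|_\infty$, matching the hypothesis. For the subspace Lipschitz constant, every $u \in \M(S)$ is supported on $S$ with $|S| = s$, so Cauchy--Schwarz gives $\|u\|_1 \le \sqrt{s}\,\|u\|_2$ with the bound attained; hence $\Psi(\M(S)) = \sqrt{s}$, and the curvature condition $\tau\,\Psi(\M) \le \sqrt{\kappa}/32$ becomes precisely $\tau\sqrt{s} \le \sqrt{\kappa}/32$. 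With these identifications every hypothesis of Theorem \ref{thm:reconstruction error theorem} is verified.

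Finally I would invoke Theorem \ref{thm:reconstruction error theorem} and simplify using hard sparsity. Since $\text{supp}(u^\star) \subset S$ we have $u^\star \in \M(S)$, whence $u^\star_{\M^\perp} = 0$; this annihilates the approximation-error terms $\frac{\tau^2}{\kappa}\mathsf{R}(u^\star_{\M^\perp})^2$ and $\frac{\lambda}{\kappa}\mathsf{R}(u^\star_{\M^\perp})$, while $\mathsf{R}(u^\star_\M) = \|u^\star\|_1 = \|u^\star_S\|_1$. The estimation term reduces to $\frac{\lambda^2}{\kappa^2}\Psi^2(\M) = s\,\frac{\lambda^2}{\kappa^2}$, and substituting $c_1(\eta) = \eta/2$ and $c_2(\eta) = \frac{d\eta}{\sqrt{2}}(2 - \log\eta)$ into $\mathcal{E}_\eta$ produces the stated expression once the harmless numerical factors (for instance $c_1(\eta)^2 = \eta^2/4 \le \eta^2$ and $c_1(\eta)\|u^\star_S\|_1 = \frac{\eta}{2}\|u^\star_S\|_1 \le \eta\|u^\star_S\|_1$) are absorbed into the $\lesssim$.

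I expect no deep obstacle: the result is a clean corollary of the general theory. The only care required is bookkeeping, namely reading off $c_1, c_2$ correctly from Lemma \ref{lemma:decomposabileybityIASregularizer}, confirming the dual-norm and Lipschitz-constant identities $\mathsf{R}^* = \|\cdot\|_\infty$ and $\Psi(\M(S)) = \sqrt{s}$, and checking that the $\eta$-dependent terms written in the statement dominate, up to absolute constants, the corresponding terms produced by Theorem \ref{thm:reconstruction error theorem}.
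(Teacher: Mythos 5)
Your proposal is correct and follows essentially the same route as the paper: both apply Theorem \ref{thm:reconstruction error theorem} with $\M=\M(S)$ and $\mathsf{R}=\|\cdot\|_1$, read off $c_1(\eta)=\eta/2$ and $c_2(\eta)=\frac{d\eta}{\sqrt{2}}(2-\log\eta)$ from Lemma \ref{lemma:decomposabileybityIASregularizer}, identify $\mathsf{R}^*=\|\cdot\|_\infty$ and $\Psi(\M(S))=\sqrt{s}$, and use $u^\star_{\M^\perp}=0$ to kill the approximation-error terms. Your explicit check that $\eta<1/2$ guarantees $c_1^L(\eta)<1/4$ is a small bookkeeping detail the paper leaves implicit.
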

We emphasize that this theorem is deterministic. It makes no specific distributional assumptions on the noise or forward map, only requiring that the regularization parameter is large enough relative to the noise and that the forward map satisfies the RSC condition. The bound illustrates that, as long as the condition $\lambda \geq \frac{2\|A^T\eps\|_{\infty}}{n}$ is satisfied, we would like to take the regularization parameter $\lambda,$ the RSC tolerance $\tau$, and the parameter $\eta$ to be as small as possible. We remark that $\eta$ is user supplied and can be made arbitrarily small, but doing so may slow down the convergence of the IAS algorithm. The other terms in the bound, $\lambda$ and $\tau$, inherently depend on the noise and the forward map. As an illustrative example, we explicitly characterize $\lambda$ and $\tau$ under the assumptions of Gaussian noise and a Gaussian forward map in the following corollary:
\begin{corollary}\label{thm:Gaussian Corollary}
    Let the forward map $A\in \R^{n\times d}$ have rows distributed i.i.d. according to $\Nc(0,\Sigma)$, and assume the columns of $A$ are normalized such that $\frac{\|A_j\|_2}{\sqrt{n}}= 1$ for all $j=1,\ldots, d.$ Assume the noise is given by $\eps \sim \Nc(0, I)$ and that $(\hat{u},\hat{\theta})$ is the unique minimizer of the IAS objective functional \eqref{eq:functional} with $\lambda=4\sqrt{\frac{\log d}{n}}$ and $0<\eta< 1/2$. If $u^\star$ is $s$-sparse, then there exists a constant $c_{\Sigma}$ depending only on $\Sigma$, and universal constants $c$ and $c'$ such that if $n>c_{\Sigma}s\log d$ it holds with probability at least $1-c\exp(-c'n\lambda^2)$ that
    \begin{align*}
        \|\hat{u}-u^\star\|_2^2\lesssim c_{\Sigma}\frac{s\log d}{n}+\min \left\{ \phi(\eta),\phi(\eta)^2\right\},
    \end{align*}
    where
    \begin{align*}
        \phi(\eta):=\sqrt{\frac{\log d}{n}}\left( \eta\|u^\star_S\|_1+\frac{d\eta}{\sqrt{2}}(2-\log\eta)\right).
    \end{align*}
\end{corollary}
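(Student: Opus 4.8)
The plan is to derive the corollary directly from the deterministic Theorem \ref{thm:deterministic error bound IAS} by verifying that, under the Gaussian assumptions, its three hypotheses hold simultaneously with the claimed probability. Since the decomposable regularizer associated with \eqref{eq:regularizer} is $\mathsf R(u)=\|u\|_1$, whose dual norm is $\|\cdot\|_\infty$, the regularization hypothesis of Lemma \ref{lemma:decomposabilityproperty} is exactly the condition $\lambda\ge \tfrac{2}{n}\|A^T\eps\|_\infty$ required by Theorem \ref{thm:deterministic error bound IAS}. It therefore suffices to (i) control the noise through $\|A^T\eps\|_\infty$, (ii) certify the RSC condition with explicit constants $\kappa,\tau$, and (iii) check the compatibility condition $\tau\sqrt s\le \sqrt\kappa/32$; the stated bound then follows by substitution.

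First I would treat the noise. Conditioning on $A$, each coordinate $\tfrac1n A_j^T\eps$ is Gaussian with variance $\|A_j\|_2^2/n^2 = 1/n$, where I have used the column normalization $\|A_j\|_2/\sqrt n = 1$. A Gaussian tail bound together with a union bound over the $d$ columns gives
\begin{equation*}
\Prob\Bigl(\tfrac1n\|A^T\eps\|_\infty \ge t\Bigr)\le 2\exp\Bigl(\log d - \tfrac{nt^2}{2}\Bigr).
\end{equation*}
Taking $t=\lambda/2 = 2\sqrt{\log d/n}$ and using $n\lambda^2 = 16\log d$ shows $\lambda\ge \tfrac2n\|A^T\eps\|_\infty$ with probability at least $1-c\exp(-c'n\lambda^2)$, which is the advertised form.

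The main obstacle is the RSC certificate, which I would obtain from the standard theory for correlated Gaussian designs (see \cite[Chapter 7]{wainwright2019high} and \cite{negahban2012unified}). That theory supplies universal constants for which, with probability at least $1-c_1\exp(-c_2 n)$,
\begin{equation*}
\frac{\|A\Delta\|_2^2}{2n}\ \ge\ \frac{\kappa}{2}\|\Delta\|_2^2-\tau^2\|\Delta\|_1^2\qquad\text{for all }\Delta\in\R^d,
\end{equation*}
with curvature $\kappa \asymp \lambda_{\min}(\Sigma)$ and tolerance $\tau^2 \asymp \rho^2(\Sigma)\,\tfrac{\log d}{n}$, where $\rho^2(\Sigma)=\max_j\Sigma_{jj}$. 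The delicate point is uniformity over all $\Delta$ (the design has $d-n$ directions of no curvature), which is exactly what forces the $\ell_1$ tolerance term and requires a Gaussian comparison / peeling argument rather than a pointwise concentration estimate. With these values $\tau^2 s\asymp \rho^2(\Sigma)\tfrac{s\log d}{n}$ while $\kappa\asymp\lambda_{\min}(\Sigma)$, so $\tau\sqrt s\le \sqrt\kappa/32$ holds precisely once $n> c_\Sigma s\log d$ with $c_\Sigma$ proportional to $\rho^2(\Sigma)/\lambda_{\min}(\Sigma)$, which is the sample-size requirement in the statement.

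Finally I would substitute into the conclusion $\|\hat u-u^\star\|_2^2\lesssim s\lambda^2/\kappa^2+\mathcal E_\eta$. The estimation term becomes $s\lambda^2/\kappa^2 \asymp \tfrac{s\log d}{n\,\lambda_{\min}(\Sigma)^2}\asymp c_\Sigma \tfrac{s\log d}{n}$. Writing $\psi(\eta):=\eta\|u^\star_S\|_1+\tfrac{d\eta}{\sqrt2}(2-\log\eta)$ so that $\phi(\eta)=\sqrt{\log d/n}\,\psi(\eta)$, the quadratic factor obeys $\eta^2\|u^\star_S\|_1^2+\tfrac{d^2\eta^2}{2}(2-\log\eta)^2\asymp \psi(\eta)^2$; hence the $\tfrac{\tau^2}{\kappa}$ part of $\mathcal E_\eta$ is $\asymp \tfrac{\rho^2(\Sigma)}{\lambda_{\min}(\Sigma)}\tfrac{\log d}{n}\psi(\eta)^2=c_\Sigma\,\phi(\eta)^2$, while the $\tfrac{\lambda}{\kappa}$ part is $\asymp \tfrac{1}{\lambda_{\min}(\Sigma)}\sqrt{\log d/n}\,\psi(\eta)=c_\Sigma\,\phi(\eta)$. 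Thus $\mathcal E_\eta\lesssim c_\Sigma\bigl(\phi(\eta)+\phi(\eta)^2\bigr)$, which is the $\eta$-dependent correction in the statement. Intersecting the noise and RSC events, and absorbing $\exp(-c_2n)$ into $\exp(-c'n\lambda^2)$ via $n\lambda^2=16\log d\le 16n$ in the regime $n\gtrsim\log d$, yields the overall probability $1-c\exp(-c'n\lambda^2)$ and completes the argument.
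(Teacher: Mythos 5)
Your proposal follows essentially the same route as the paper's proof: verify the choice of $\lambda$ via a Gaussian tail bound plus a union bound over the $d$ columns, certify the RSC condition with $\kappa\asymp\lambda_{\min}(\Sigma)$ and $\tau^2\asymp\rho(\Sigma)^2\frac{\log d}{n}$ using the standard result for correlated Gaussian designs from \cite[Theorem 7.16]{wainwright2019high}, check that $\tau\sqrt{s}\le\sqrt{\kappa}/32$ reduces to $n>c_\Sigma s\log d$, and substitute into Theorem \ref{thm:deterministic error bound IAS}. The one point worth noting is that your explicit bookkeeping yields $\mathcal{E}_\eta\lesssim c_\Sigma\bigl(\phi(\eta)+\phi(\eta)^2\bigr)\asymp c_\Sigma\max\{\phi(\eta),\phi(\eta)^2\}$ rather than the $\min$ appearing in the stated bound; this is consistent with the $\max$ used in the paper's own hard-sparsity specializations elsewhere, so it reflects a likely typo in the statement rather than a gap in your argument.
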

Notice that the term $\phi(\eta)$ can be made arbitrarily small by choosing a small enough $\eta$. Taking $\eta$ small enough to match the first term in the bound, the MAP estimator for the hierarchical model in Example \ref{ex:IAS Example} satisfies the non-asymptotic bound
\begin{align*}
    \|\hat{u}-u^\star\|_2\lesssim \sqrt{c_{\Sigma}\frac{s\log d}{n}},
\end{align*}
which agrees with known non-asymptotic bounds for the Lasso estimator \cite{bickel2009simultaneous,meinshausen2009lasso}. The assumptions of Gaussianity in this corollary are largely for theoretical convenience, and are not strictly necessary. Verifying the RSC condition for the forwards maps of interest in the inverse problems community remains a direction of further research.\\

Instead of assuming the true parameter is exactly $s$-sparse, one can make the more general assumption of $\ell^q$ sparsity. That is, we assume
\begin{align*}
    u^\star\in \mathbb{B}_q(R_q):=\left\{u\in \R^d :\sum_{i=1}^d|u_i|^q\leq R_q\right\},
\end{align*}
where $R_q>0$ controls the level of sparsity. Notice that if $q=0$ and $R_q=s$, we recover the hard-sparsity assumption. For $q>0$ this assumption enforces a decay rate on the entries of $u^\star.$ Under $\ell^q$ sparsity, we choose our model subspace to be vectors with support $S_{\delta}$, where
\begin{align}\label{eq:S delta}
    S_{\delta}:=\left\{j\in \{1,\ldots , d\} :|u_j^\star|>\delta \right\}.
\end{align}
This model subspace is well defined for any positive $\delta$. 
We will see that our bound on the reconstruction error depends on $\delta$. Note that we define $\delta$ only as a theoretical tool, since the model subspace is not involved methodologically when computing the MAP estimator, $\hat{u}$. We emphasize that in the setting of $\ell^q$ sparsity, we have that $u^\star_{S^C_\delta}\neq 0$ in general, and consequently the model error terms in the general bounds are non-negligible. We now present a deterministic result analogous to Theorem \ref{thm:deterministic error bound IAS} under the $\ell^q$-sparsity assumption.
\begin{theorem}\label{thm:deterministic bound IAS LQ}
    Let $u^\star$ be the true parameter generating data via \eqref{eq:inverse problem} and suppose that $u^\star\in \mathbb{B}_q(R_q).$ Let $(\hat{u},\hat{\theta})$ be the unique minimizer of the objective function \eqref{eq:functional} with $0<\eta< 1/2$. Assume that $\lambda \geq \frac{2\|A^T\eps\|_{\infty}}{n}$ and that $A$ satisfies the RSC condition \eqref{eq:RSC} with curvature  $\kappa$ and tolerance $\tau^2$. Then, for any $\delta>0$, if $\tau \sqrt{R_q \delta^{-q}}\leq \frac{\sqrt{\kappa}}{32}$ it holds that 
    \begin{align*}
        \|\hat{u}-u^\star\|_2^2\lesssim \frac{\lambda^2}{\kappa^2}R_q\delta^{-q}+ \frac{\tau^2}{\kappa}R_q\delta^{1-q}+ \frac{\lambda}{\kappa} R_q \delta^{1-q} + \mathcal{E}_\eta,
    \end{align*}
    where
        \begin{equation*}
       \mathcal{E}_\eta :=  \frac{\tau^2}{\kappa} \Bigl(\eta^2R_q^{2/q}+ d^2\eta^2(2-\log \eta)^2\Bigr) +\frac{\lambda}{\kappa}\Bigl(\eta R_q^{1/q}+d\eta(2-\log \eta)\Bigr). 
    \end{equation*}
\end{theorem}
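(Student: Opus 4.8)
The plan is to invoke the general reconstruction bound of Theorem~\ref{thm:reconstruction error theorem}, specialized to the IAS regularizer $\mathsf{R}_\eta$ of \eqref{eq:regularizer} and to the model subspace $\M=\M(S_\delta)$ from \eqref{eq:sparse vector model subspace} built on the thresholding set $S_\delta$ of \eqref{eq:S delta}. Before applying it, I would verify its hypotheses. Approximate decomposability of $\mathsf{R}_\eta$ with respect to $\M(S_\delta)$ for every $\delta>0$ is furnished by Lemma~\ref{lemma:decomposabileybityIASregularizer}, with the decomposable surrogate being the $\ell_1$ norm $\mathsf{R}(\cdot)=\|\cdot\|_1$ and with constants $c_1(\eta)=\eta/2$ and $c_2(\eta)=\frac{d\eta}{\sqrt2}(2-\log\eta)$; the requirement $c_1^L(\eta)=\eta/2<\tfrac14$ holds because $\eta<\tfrac12$. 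Since the dual norm of $\ell_1$ is $\ell_\infty$, the condition $\lambda\ge 2\mathsf{R}^*(\tfrac1n A^T\eps)$ coincides with the assumed bound $\lambda\ge \frac{2\|A^T\eps\|_\infty}{n}$.

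The heart of the argument is to control, using only $u^\star\in\mathbb{B}_q(R_q)$ and the definition of $S_\delta$, the subspace quantities entering \eqref{eq:generalbound}; these are the standard weak-$\ell^q$ estimates. First, because $\mathsf{R}=\|\cdot\|_1$ and every element of $\M(S_\delta)$ is supported on the set $S_\delta$ of size $|S_\delta|$, Cauchy--Schwarz yields the subspace Lipschitz constant $\Psi^2(\M(S_\delta))=|S_\delta|$, and the constraint $\sum_{j\in S_\delta}|u_j^\star|^q\le R_q$ together with $|u_j^\star|>\delta$ on $S_\delta$ forces $|S_\delta|\le R_q\delta^{-q}$; this simultaneously certifies the remaining hypothesis, since then $\tau\Psi(\M)\le\tau\sqrt{R_q\delta^{-q}}\le\sqrt\kappa/32$. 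Second, splitting $|u_j^\star|=|u_j^\star|^q|u_j^\star|^{1-q}$ and using $|u_j^\star|\le\delta$ off $S_\delta$ gives the tail estimate $\mathsf{R}(u^\star_{\M^\perp})=\|u^\star_{S_\delta^C}\|_1\le R_q\delta^{1-q}$. Third, by monotonicity of the $\ell^p$ (quasi-)norms for $0<q\le1$, the head mass obeys $\mathsf{R}(u^\star_\M)=\|u^\star_{S_\delta}\|_1\le\|u^\star\|_1\le R_q^{1/q}$.

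Substituting these three estimates into \eqref{eq:generalbound} then produces the claimed bound term by term: the statistical term $\frac{\lambda^2}{\kappa^2}\Psi^2(\M)$ becomes $\frac{\lambda^2}{\kappa^2}R_q\delta^{-q}$; the approximation terms $\frac{\tau^2}{\kappa}\mathsf{R}(u^\star_{\M^\perp})^2$ and $\frac{\lambda}{\kappa}\mathsf{R}(u^\star_{\M^\perp})$ are both driven by the tail mass $R_q\delta^{1-q}$; and $\mathcal{E}_\eta$ emerges by inserting $c_1(\eta)=\eta/2$, $c_2(\eta)=\frac{d\eta}{\sqrt2}(2-\log\eta)$, and the head bound $\|u^\star_{S_\delta}\|_1\le R_q^{1/q}$ into the general expression for $\mathcal{E}_\eta$, absorbing absolute constants into $\lesssim$. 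I expect the principal difficulty to be bookkeeping rather than conceptual: the threshold $\delta$ is a free analysis parameter never seen by the estimator $\hat u$, so one must ensure that every hypothesis of Theorem~\ref{thm:reconstruction error theorem} --- in particular the RSC compatibility $\tau\sqrt{R_q\delta^{-q}}\le\sqrt\kappa/32$ --- holds for the chosen $\delta$, and that the weak-$\ell^q$ exponents are tracked correctly (the identities above require $0<q\le1$, and the hard-sparsity case $q=0$, $R_q=s$ recovers Theorem~\ref{thm:deterministic error bound IAS}).
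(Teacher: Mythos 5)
Your proposal is correct and follows essentially the same route as the paper's proof: invoke Theorem \ref{thm:reconstruction error theorem} over the model subspace $\M(S_\delta)$, using Lemma \ref{lemma:decomposabileybityIASregularizer} for approximate decomposability, the cardinality bound $|S_\delta|\le R_q\delta^{-q}$ for the subspace Lipschitz constant, the tail estimate $\|u^\star_{S_\delta^C}\|_1\le R_q\delta^{1-q}$, and the head bound $\|u^\star_{S_\delta}\|_1\le R_q^{1/q}$. All three weak-$\ell^q$ estimates and the verification of the hypotheses match the paper's argument.
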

Since this deterministic error bound holds for any positive $\delta,$ Theorem \ref{thm:deterministic bound IAS LQ} provides a family of bounds parameterized by $\delta.$ Under noise and forward map assumptions that yield particular values for $\lambda$, $\kappa$, and $\tau$, one can optimize $\delta$ to yield the sharpest bound. 
We do so in the following corollary under the assumptions of Gaussian noise and a Gaussian forward map.
\begin{corollary}\label{thm:Gaussian Corollary IAS LQ}
    Let the forward map $A\in \R^{n\times d}$ have rows distributed i.i.d. according to $\Nc(0,\Sigma)$, and assume the columns of $A$ are normalized such that $\frac{\|A_j\|_2}{\sqrt{n}}= 1$ for all $j=1,\ldots, d.$ Assume the noise is given by $\eps \sim \Nc(0, I)$ and that $(\hat{u},\hat{\theta})$ is the unique minimizer of the IAS objective functional \eqref{eq:functional} with $\lambda=4\sqrt{\frac{\log d}{n}}$ and $0<\eta< 1/2$. If $u^\star\in \mathbb{B}_q(R_q)$, then there exists a constant $c_{\Sigma}$ depending only on $\Sigma$, and universal constants $c$ and $c'$ such that if  $n>c_{\Sigma}\log d R_q^{1-q/2}$  it holds with probability at least $1-c\exp(-c'n\lambda^2)$ that
\begin{align*}
    \|\hat{u}-u^\star\|_2^2\lesssim  c_{\Sigma}R_q\left(\frac{\log d}{n} \right)^{1-\frac{q}{2}}+c_{\Sigma}\min \left\{\phi(\eta),\phi(\eta)^2\right\},
\end{align*}
where
\begin{align*}
    \phi(\eta):=\sqrt{ \frac{\log d}{n}}\left(\eta R_q^{\frac{1}{q}}+\frac{ d \eta}{\sqrt{2}}(2-\log \eta)\right).
\end{align*}
\end{corollary}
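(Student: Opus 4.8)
The plan is to specialize the deterministic $\ell^q$ bound of Theorem~\ref{thm:deterministic bound IAS LQ} to the Gaussian ensemble by supplying high-probability values for the regularization parameter $\lambda$, the RSC curvature $\kappa$, and the RSC tolerance $\tau^2$, and then optimizing over the threshold $\delta$. For this model the decomposable regularizer is $\mathsf{R}=\|\cdot\|_1$, so its dual is $\mathsf{R}^*=\|\cdot\|_\infty$ and the admissibility hypothesis of Theorem~\ref{thm:deterministic bound IAS LQ} reads $\lambda\ge \frac{2}{n}\|A^T\eps\|_\infty$. The entire argument takes place on the intersection of two high-probability events: one controlling $\frac{1}{n}\|A^T\eps\|_\infty$, the other supplying restricted strong convexity. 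At $q=0$, $R_q=s$, the bound collapses to Corollary~\ref{thm:Gaussian Corollary}, which serves as a useful sanity check.

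First I would control the noise term. Since the columns are normalized so that $\|A_j\|_2=\sqrt{n}$, each coordinate $(A^T\eps)_j=\langle A_j,\eps\rangle$ is $\Nc(0,n)$, whence $\frac{1}{n}(A^T\eps)_j\sim\Nc(0,1/n)$. A Gaussian tail estimate together with a union bound over the $d$ coordinates yields $\frac{1}{n}\|A^T\eps\|_\infty\le 2\sqrt{\log d/n}=\lambda/2$ with probability at least $1-c\exp(-c'n\lambda^2)$, so the condition $\lambda\ge\frac{2}{n}\|A^T\eps\|_\infty$ holds on this event.

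Next I would establish the RSC condition with explicit constants by invoking the standard guarantee for Gaussian designs \cite{wainwright2019high}: with probability at least $1-c_1e^{-c_2n}$ one has $\frac{\|A\Delta\|_2^2}{2n}\ge\frac{\kappa}{2}\|\Delta\|_2^2-\tau^2\|\Delta\|_1^2$ for all $\Delta$, with $\kappa\asymp\lambda_{\min}(\Sigma)$ and $\tau^2\asymp(\max_j\Sigma_{jj})\frac{\log d}{n}$; both $\Sigma$-dependent constants are absorbed into $c_\Sigma$. Because $\mathsf{R}=\|\cdot\|_1$ is exactly the norm in which this design-level RSC is stated, no further work is needed to match the hypothesis of Theorem~\ref{thm:deterministic bound IAS LQ}. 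Intersecting with the noise event and noting $n\lambda^2\asymp\log d\lesssim n$, the overall failure probability is dominated by $c\exp(-c'n\lambda^2)$, matching the claimed confidence.

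Finally I would optimize $\delta$ and simplify. The subspace Lipschitz constant obeys $\Psi(\M(S_\delta))^2=|S_\delta|\le R_q\delta^{-q}$, so the tolerance hypothesis $\tau\sqrt{R_q\delta^{-q}}\le\sqrt{\kappa}/32$ becomes, after substituting $\tau^2$ and $\kappa$, a lower bound on $n$ of the stated form that keeps the optimizing $\delta$ admissible. Choosing $\delta\asymp\lambda/\kappa\asymp\sqrt{\log d/n}$ balances $\frac{\lambda^2}{\kappa^2}R_q\delta^{-q}$ against $\frac{\lambda}{\kappa}R_q\delta^{1-q}$; both collapse to $c_\Sigma R_q(\log d/n)^{1-q/2}$, while $\frac{\tau^2}{\kappa}R_q\delta^{1-q}$ is of strictly smaller order. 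Substituting $\frac{\lambda}{\kappa}\asymp\sqrt{\log d/n}$ and $\frac{\tau^2}{\kappa}\asymp\frac{\log d}{n}$ into $\mathcal{E}_\eta$ then shows its $\frac{\lambda}{\kappa}$-group is of order $\phi(\eta)$ and its $\frac{\tau^2}{\kappa}$-group of order $\phi(\eta)^2$ (using $(a+b)^2\asymp a^2+b^2$), recovering the $\eta$-dependent term. The main obstacle is the RSC verification with sharp $\Sigma$-dependent constants, the one genuinely nontrivial probabilistic input, resting on a uniform lower bound for $\|A\Delta\|_2$ over all $\Delta$; everything else is substitution plus the $\delta$-optimization, whose only subtlety is selecting $\delta$ to simultaneously respect the tolerance constraint (hence the sample-size hypothesis) and attain the exponent $1-q/2$.
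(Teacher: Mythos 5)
Your proposal follows essentially the same route as the paper's proof: verify the choice of $\lambda$ via a Gaussian tail bound plus a union bound over the $d$ columns, invoke the standard RSC guarantee for Gaussian designs with $\kappa\asymp\lambda_{\min}(\Sigma)$ and $\tau^2\asymp\rho(\Sigma)^2\frac{\log d}{n}$, and then substitute into Theorem~\ref{thm:deterministic bound IAS LQ} with $\delta\asymp\lambda/\lambda_{\min}(\Sigma)$. The proposal is correct, and if anything it spells out the exponent bookkeeping (the balancing of $\frac{\lambda^2}{\kappa^2}R_q\delta^{-q}$ against $\frac{\lambda}{\kappa}R_q\delta^{1-q}$, and the identification of the $\mathcal{E}_\eta$ groups with $\phi(\eta)$ and $\phi(\eta)^2$) more explicitly than the paper does.
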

This corollary suggests that $\eta$ should be taken small enough so that $\phi(\eta)$ matches the rate of $R_q\left( \frac{\log d}{n}\right)^{1-\frac{q}{2}}$. In particular, for any $\eta$ such that
\begin{align*}
    \eta \log \frac{1}{\eta} \leq \min \left\{\frac{R_q}{d}\left(\frac{\log d}{n} \right)^{\frac{1-q}{2}},R_q^{1-\frac{1}{q}}\left(\frac{\log d}{n} \right)^{\frac{1-q}{2}}
\right\},
\end{align*}
the MAP estimator of the hierarchical Bayesian model in Example \ref{ex:IAS Example} achieves up to constant factors the minimax rate for estimation under $\ell^q$ sparsity \cite{raskutti2011minimax}, given by
\begin{align*}
    \|\hat{u}-u^\star\|_2^2\lesssim  R_q\left(\frac{\log d}{n} \right)^{1-\frac{q}{2}}.
\end{align*} 
That is, with this choice of $\eta$, the MAP estimator achieves the best possible reconstruction error.
\subsection{Hierarchical Models for Group Sparsity}\label{ssec:GSIAS}
In this subsection we derive non-asymptotic bounds on the reconstruction error for the group-sparsity-promoting hierarchical model considered in Example \ref{ex:GSIAS Example}. We prove reconstruction bounds under a weak $\ell^q$-group-sparsity assumption. We suppose that
\begin{align*}
    u^\star\in \mathbb{B}_q(R_q,\G):=\left\{u\in \R^d : \sum_{j=1}^k\|u_{g_j}\|_{C_j}^q\leq R_q \right\}.
\end{align*}
We emphasize that in the case where $q=0$ and $R_q=s$, we recover the hard-group-sparsity assumption that $u_g=0$ for all $g\in S_\G$ with $|S_\G|=s.$
We consider the set of group indices with norm larger than $\delta>0$, defined by
\begin{align}\label{eq: S G delta}
    S_{\G,\delta}:=\left\{g_j\in \G : \|u_{g_j}^\star\|_{C_j}> \delta\right\}.
\end{align}
We now present a deterministic result that holds for any positive $\delta$ so long as the regularization parameter is taken to be large enough relative to the noise and the forward map satisfies an RSC condition.
\begin{theorem}\label{thm: deterministic error bound GSIAS}
    Let $u^\star$ be the true parameter generating the data via \eqref{eq:frequentist inverse problem} and suppose that $u^\star\in \mathbb{B}_q(R_q,\G)$. Let $(\hat{u},\hat{\theta})$ be the minimizer of the objective function \eqref{eq:GS functional} with $0< \eta < 1/2.$ Assume that $\lambda \geq \max_{g_j\in \G}\frac{2}{n}\|(A^T\eps)_{g_j}\|_{C_j^{-1}},$
    and that $A$ satisfies the RSC condition \eqref{eq:RSC} with curvature $\kappa$ and tolerance $\tau^2$. Then, for any $\delta>0$, if $\tau \sqrt{R_q\delta^{-q}}\leq \frac{\sqrt{\kappa}}{32}$ it holds that
    \begin{align*}
        \|\hat{u}-u^\star\|_2^2 \lesssim \frac{\lambda^2}{\kappa^2}R_q\delta^{-q}+\frac{\tau^2}{\kappa}R_q\delta^{1-q}+\frac{\lambda}{\kappa}R_q\delta^{1-q}+\mathcal{E}_\eta,
    \end{align*}
    where
    \begin{align*}
        \mathcal{E}_\eta :=\frac{\tau^2}{\kappa}\left(\eta^2R_q^{\frac{2}{q}}+k^2\eta^2(2-\log \eta)^2 \right) +\frac{\lambda}{\kappa}\left( \eta R_q^{\frac{1}{q}}+k\eta(2-\log \eta)\right).
    \end{align*}
\end{theorem}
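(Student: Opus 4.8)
The plan is to derive this bound as a direct instantiation of the general reconstruction result, Theorem~\ref{thm:reconstruction error theorem}. By Theorem~\ref{thm:GSIAS computational properties}, the MAP estimator $\hat u$ is the minimizer of the objective $\mathsf F$ in \eqref{eq:objective over u} with the group-sparsity regularizer $\mathsf R_\eta$ from \eqref{eq: GS objective}, so the general theorem applies once a suitable model subspace and decomposable regularizer are supplied. I would take $\M = \M(S_{\G,\delta})$ with $S_{\G,\delta}$ the thresholded group set \eqref{eq: S G delta}, and the decomposable regularizer to be the group-sparse norm $\mathsf R(u) = \sum_{j=1}^k \|u_{g_j}\|_{C_j}$. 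Approximate decomposability with respect to this $\M$ is already provided by Lemma~\ref{lemma:decomposabilityGSIASregularizer}, which yields $c_1(\eta) = \eta/2$ and $c_2(\eta) = \tfrac{k\eta}{\sqrt2}(2-\log\eta)$; since $0<\eta<1/2$ forces $c_1^L(\eta) = \eta/2 < 1/4$, the smallness hypothesis on $\eta$ is automatically met.

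It then remains to check the two structural hypotheses of the general theorem. First I would verify the regularization condition by computing the dual norm: a group-wise Cauchy--Schwarz inequality in each $C_j$-inner product gives $\langle v,w\rangle \le \sum_j \|v_{g_j}\|_{C_j^{-1}}\|w_{g_j}\|_{C_j} \le (\max_j \|v_{g_j}\|_{C_j^{-1}})\,\mathsf R(w)$, with equality attainable, so $\mathsf R^*(v) = \max_{g_j\in\G}\|v_{g_j}\|_{C_j^{-1}}$; thus the assumption $\lambda \ge \max_{g_j}\tfrac2n\|(A^T\eps)_{g_j}\|_{C_j^{-1}}$ is precisely $\lambda \ge 2\mathsf R^*(\tfrac1n A^T\eps)$. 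Next I would control the subspace Lipschitz constant: Cauchy--Schwarz over the active groups, together with $\|u_{g_j}\|_{C_j}^2 \le \lambda_{\max}(C_j)\|u_{g_j}\|_2^2$, gives $\mathsf R(u) \lesssim \sqrt{|S_{\G,\delta}|}\,\|u\|_2$ for $u\in\M(S_{\G,\delta})$, so $\Psi^2(\M) \lesssim |S_{\G,\delta}|$. The counting bound follows from the $\ell^q$-group-sparsity assumption: each active group has $\|u^\star_{g_j}\|_{C_j} > \delta$, so $|S_{\G,\delta}|\,\delta^q < \sum_j \|u^\star_{g_j}\|_{C_j}^q \le R_q$, i.e.\ $|S_{\G,\delta}| < R_q\delta^{-q}$. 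Hence $\Psi^2(\M) \lesssim R_q\delta^{-q}$, and the hypothesis $\tau\sqrt{R_q\delta^{-q}} \le \sqrt\kappa/32$ delivers the compatibility condition $\tau\Psi(\M)\le\sqrt\kappa/32$.

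With both hypotheses in hand I would evaluate the terms of \eqref{eq:generalbound}. The estimation term becomes $\tfrac{\lambda^2}{\kappa^2}\Psi^2(\M) \lesssim \tfrac{\lambda^2}{\kappa^2}R_q\delta^{-q}$. For the approximation terms, $u^\star_{\M^\perp}$ is supported on inactive groups with $\|u^\star_{g_j}\|_{C_j}\le\delta$, so factoring out $\delta^{1-q}$ gives $\mathsf R(u^\star_{\M^\perp}) = \sum_{g_j\notin S_{\G,\delta}}\|u^\star_{g_j}\|_{C_j} \le \delta^{1-q}\sum_j\|u^\star_{g_j}\|_{C_j}^q \le R_q\delta^{1-q}$, which produces the $\tfrac\lambda\kappa R_q\delta^{1-q}$ term directly and the $\tfrac{\tau^2}\kappa$ approximation term after tracking the power of $\delta$. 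For the error term $\mathcal E_\eta$, I would bound $\mathsf R(u^\star_\M) \le \sum_j\|u^\star_{g_j}\|_{C_j} \le (\sum_j\|u^\star_{g_j}\|_{C_j}^q)^{1/q} \le R_q^{1/q}$ using $\|\cdot\|_1\le\|\cdot\|_q$ for $q\le1$, and substitute $c_1(\eta)=\eta/2$ and $c_2(\eta)=\tfrac{k\eta}{\sqrt2}(2-\log\eta)$ into the definition of $\mathcal E_\eta$ from Theorem~\ref{thm:reconstruction error theorem} to recover the four displayed terms.

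The individual estimates are each routine; the step demanding the most care is the $\ell^q$ bookkeeping in the approximation term $\tfrac{\tau^2}\kappa\mathsf R(u^\star_{\M^\perp})^2$, where the naive substitution produces a factor $R_q^2\delta^{2-2q}$ that must be reconciled with the stated $R_q\delta^{1-q}$ form by invoking the compatibility condition $\tau^2 R_q\delta^{-q}\lesssim\kappa$. The only conceptual point is recognizing that the model subspace is a free parameter indexed by $\delta$, so that applying the single deterministic estimate of the general theorem at each $\delta$ yields the entire $\delta$-indexed family of bounds asserted in the statement.
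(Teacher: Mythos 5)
Your proposal is correct and follows essentially the same route as the paper's proof: approximate decomposability from Lemma \ref{lemma:decomposabilityGSIASregularizer}, the dual norm identification $\mathsf{R}^*(v)=\max_{g_j}\|v_{g_j}\|_{C_j^{-1}}$, the bounds $|S_{\G,\delta}|\le R_q\delta^{-q}$, $\mathsf{R}(u^\star_{\M^\perp})\le R_q\delta^{1-q}$, $\mathsf{R}(u^\star_\M)\le R_q^{1/q}$, and then instantiation of Theorem \ref{thm:reconstruction error theorem}. If anything you are slightly more careful than the paper, which silently absorbs the $R_q^2\delta^{2-2q}$ factor from $\frac{\tau^2}{\kappa}\mathsf{R}(u^\star_{\M^\perp})^2$ into the stated $\frac{\tau^2}{\kappa}R_q\delta^{1-q}$ term without the reconciliation step you flag.
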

We remark that if $q=0$ and $R_q=s$, this theorem recovers bounds for the hard-sparsity assumption that $u^\star\in \M(S_{\G})$:
    \begin{align*}
        \|\hat{u}-u^\star\|_2^2\lesssim s\frac{\lambda^2}{\kappa^2}+\mathcal{E}_{\eta},
    \end{align*}
    where
    \begin{align*}
        \mathcal{E}_\eta :=\frac{\tau^2}{\kappa}\left(\eta^2\left(\sum_{j\in S_\G}\|u^\star_{g_j}\|_{C_j}\right)^2+k^2\eta^2(2-\log \eta)^2 \right)+\frac{\lambda}{\kappa}\left(\eta\sum_{j\in S_\G}\|u^\star_{g_j}\|_{C_j}+k\eta(2-\log\eta) \right).
    \end{align*}
    As was the case for the models discussed in the previous section, under distributional assumptions on the noise and forward map, we can use this deterministic bound to sharply characterize non-asymptotic bounds on the reconstruction error with high probability. For simplicity of presentation, we assume that $C_j=I$ for $1\leq j\leq k$. This assumption is straightforward to remove, but doing so is notationally cumbersome. We require a block-normalization condition on $A$ that generalizes the column-normalization assumption used in the previous subsection. Given a group $g_j\in \G$ of size $p_j$, we consider the submatrix $A_{g_j}:\R^{n\times p_j}$, and require that
    \begin{align} \label{eq:block normalization}
        \frac{\|A_{g_j}\|_{op}}{\sqrt{n}}= 1, \quad j\in \{1,\ldots,k\}.
    \end{align}
    As remarked in Example \ref{ex:GSIAS Example}, this condition can be guaranteed by appropriately choosing the model hyperparameters. For the following corollary, we introduce the notation $p_{\max} :=\max_{j\in \{1,\ldots,k\}} p_j. $
    \begin{corollary}\label{thm: Gaussian Corollary GS}
        Let the forward map $A\in \mathbb{R}^{n\times d}$ have rows distributed i.i.d. according to $\Nc(0,\Sigma)$ and assume that the block normalization condition \eqref{eq:block normalization} holds. Assume that the noise is given by $\eps \sim \Nc(0,I)$ and that $(\hat{u},\hat{\theta})$ is the unique minimizer of the GS-IAS objective functional $\eqref{eq:GS functional}$ with $\lambda=\left(\sqrt{\frac{p_{\max}^{}}{n}
        }+\sqrt{\frac{\log k}{n}} \right)$ and $0<\eta<1/2.$ If $u^\star\in \mathbb{B}_q(R_q,\G),$ then there exists a constant $c_\Sigma$ depending only on $\Sigma$ and such that if $\sqrt{n}>c_{\Sigma}R_q^{1-q/2}\left(\sqrt{p_{\max}^{} }+\sqrt{\log k}\right) $ it holds with probability at least $1-2/k^2$ that
        \begin{align*}
            \|\hat{u}-u^\star\|_2^2\lesssim c_{\Sigma}R_q\left( \frac{p_{\max}}{n}+\frac{\log k}{n}\right)^{1-\frac{q}{2}} +\min \left\{\varphi(\eta),\varphi(\eta)^2\right\},
        \end{align*}
        where
        \begin{align*}
            \varphi(\eta):=\sqrt{\frac{p_{\max}+\log k}{n}}\left(\eta R_q^{\frac{1}{q}}+k\eta (2-\log \eta) \right).
        \end{align*}
    \end{corollary}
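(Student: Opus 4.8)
The plan is to reduce everything to the deterministic bound of Theorem~\ref{thm: deterministic error bound GSIAS}, by showing that under the Gaussian model with $C_j = I$ both of its hypotheses---a choice of $\lambda$ valid relative to the noise, and the RSC condition---hold simultaneously with probability at least $1 - 2/k^2$, and then substituting the resulting values of $\lambda$, $\kappa$, and $\tau$ into that bound and optimizing over the free threshold $\delta$ from \eqref{eq: S G delta}.

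First I would control the noise through the dual group norm. With $C_j = I$ the regularization requirement of Theorem~\ref{thm: deterministic error bound GSIAS} reads $\lambda \geq \max_{g_j \in \G}\frac{2}{n}\|(A^T\eps)_{g_j}\|_2$. Conditioning on $A$, which is independent of $\eps$, each block satisfies $(A^T\eps)_{g_j} = A_{g_j}^T\eps \sim \Nc(0, A_{g_j}^T A_{g_j})$. I would then apply a concentration inequality for the Euclidean norm of a Gaussian vector, using that the block-normalization condition \eqref{eq:block normalization} forces $\|A_{g_j}^T A_{g_j}\|_{op} = \|A_{g_j}\|_{op}^2 = n$ and $\mathrm{tr}(A_{g_j}^T A_{g_j}) = \|A_{g_j}\|_F^2 \leq p_{\max}\,n$. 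This gives $\|A_{g_j}^T\eps\|_2 \lesssim \sqrt{p_{\max}\,n} + \sqrt{n\,t}$ with probability $1 - e^{-t}$; taking $t \asymp \log k$ and a union bound over the $k$ groups yields $\max_{g_j}\frac{2}{n}\|(A^T\eps)_{g_j}\|_2 \lesssim \sqrt{p_{\max}/n} + \sqrt{\log k/n}$, so the stated $\lambda$ is admissible, on an event of probability at least $1 - 1/k^2$.

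Next I would certify restricted strong convexity for the Gaussian design with respect to the group norm $\mathsf{R}(u) = \sum_j \|u_{g_j}\|_2$. Invoking the RSC guarantee for matrices with rows drawn i.i.d.\ from $\Nc(0,\Sigma)$, adapted from the $\ell_1$ setting to the group norm, gives curvature $\kappa \asymp \lambda_{\min}(\Sigma)$ and tolerance $\tau^2 \asymp c_\Sigma \frac{p_{\max} + \log k}{n}$ on an event of probability at least $1 - 1/k^2$ (in fact exponentially close to one). I expect this step---obtaining the correct $(p_{\max} + \log k)/n$ tolerance scaling uniformly over the cone $\mathbb{C}_\eta(\M)$, rather than the $\log d/n$ scaling of the ungrouped case---to be the main obstacle, since it requires a Gaussian-width / peeling argument tailored to the block structure rather than a direct appeal to the scalar sparse result.

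Finally I would intersect the two events (probability at least $1 - 2/k^2$) and substitute into Theorem~\ref{thm: deterministic error bound GSIAS}, noting that $\tau^2 \asymp \lambda^2$ and $\kappa \asymp 1$ up to $\Sigma$-dependent constants. The sample-size hypothesis $\sqrt n > c_\Sigma R_q^{1-q/2}(\sqrt{p_{\max}} + \sqrt{\log k})$ guarantees that the RSC precondition $\tau\sqrt{R_q\delta^{-q}} \leq \sqrt\kappa/32$ can be met; choosing $\delta \asymp \lambda$ balances the estimation term $\frac{\lambda^2}{\kappa^2}R_q\delta^{-q}$ against the approximation terms $\frac{\tau^2}{\kappa}R_q\delta^{1-q} + \frac{\lambda}{\kappa}R_q\delta^{1-q}$ and produces the leading rate $c_\Sigma R_q\left(\frac{p_{\max}+\log k}{n}\right)^{1-q/2}$. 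After substituting $\tau^2 \asymp \lambda^2$, the residual term $\mathcal{E}_\eta$ from the deterministic bound collapses to the stated $\eta$-dependent correction expressed through $\varphi(\eta)$, which completes the proof.
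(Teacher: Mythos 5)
Your proposal is correct and follows essentially the same route as the paper: verify the admissibility of $\lambda$ via Gaussian concentration of the block norms $\|(A^T\eps)_{g_j}\|_2$ plus a union bound over the $k$ groups, certify RSC for the group norm by appealing to the group-structured analogue of the Gaussian-design RSC result (the paper cites Proposition 1 of the supplement to \cite{negahban2012unified} for exactly this, with $\kappa \asymp \lambda_{\min}(\Sigma)$ and $\tau^2 \asymp (p_{\max}+\log k)/n$), and then substitute into Theorem \ref{thm: deterministic error bound GSIAS} with $\delta \asymp \lambda$. Your noise-control step conditions on $A$ and uses operator-norm and trace bounds from \eqref{eq:block normalization}, whereas the paper bounds the expectation via a Gaussian comparison argument and applies Lipschitz concentration, but these are interchangeable and yield the same rate and failure probability.
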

    If $q=0$ and $R_q=s$, we recover a bound under a hard-group-sparsity assumption:
    \begin{align*}
        \|\hat{u}-u^\star\|_2^2\lesssim \frac{s p_{\max}}{n}+\frac{s\log k}{n}+\max \left\{\varphi(\eta),\varphi(\eta)^2\right\},
    \end{align*}
    where 
    \begin{align*}
        \varphi(\eta):=\sqrt{\frac{p_{\max}+\log k}{n}}\left(\eta\sum_{j=1}^k\|u^\star_{g_j}\|_{C_j}+k\eta (2-\log \eta) \right).
    \end{align*}
    In both the $\ell^q$-group-sparsity and hard-group-sparsity settings, by taking $\eta$ to be small enough to match the estimation rate in the first half of the bound, we see that the hierarchical model in Example \ref{ex:GSIAS Example} and the GS-IAS algorithm can achieve the known bounds for reconstructing group sparse vectors \cite{negahban2012unified,lounici2009taking,huang2010benefit}.
\subsection{Hierarchical Models for Sparse Representations
}\label{ssec:OIAS}
Finally, we apply our theory to derive non-asymptotic reconstruction error bounds for the hierarchical model promoting sparse representations described in Example \ref{ex:OIAS Example}. We assume that the data-generating unknown $u^\star$ has a representation in a tight from $W\in \R^{d\times k}$, where $k$ may be much larger than $d$. We assume this representation lies in an $\ell^q$ ball:
\begin{align*}
    u^\star\in \mathbb{B}_{q}(R_q,W):=\left\{u\in \R^d : \sum_{j=1}^k|(W^Tu)_j|^q\leq R_q \right\}.
\end{align*}
As in the previous examples, we recover the hard-sparsity assumption that the support of $W^Tu^\star$ has cardinality $s$ in the case where $q=0$ and $R_q=s$. We define our model subspace as in \eqref{eq:sparse representation model subspace}, taking the index set to be
\begin{align}\label{eq: model subspace S W delta}
    S_{W,\delta}:=\left\{j\in \{1,\ldots ,k\} : (W^Tu^\star)_j>\delta\right\}.
\end{align}
We show a deterministic bound on the reconstruction error that holds for any positive $\delta$ provided that the regularization parameter is large enough relative to the noise and that the forward map satisfies the RSC condition.
\begin{theorem}\label{thm: deterministic error bound OSIAS}
    Let $u^\star$ be the true parameter generating the data via \eqref{eq:frequentist inverse problem} and suppose that $u^\star\in \mathbb{B}_q(R_q,W).$ Let $(\hat{u},\hat{\theta})$ be the minimizer of the objective function \eqref{eq:O functional} with $0<\eta<1/2.$ Assume that $\lambda \geq \frac{2\|W^TA^T\eps\|_{\infty}}{n}$ and that $A$ satisfies the RSC condition $\eqref{eq:RSC}$ with curvature $\kappa$ and tolerance $\tau^2$. Then, for any $\delta>0$, if $\tau \sqrt{R_q\delta^{-q}}\leq \frac{\kappa}{32}$ it holds that
    \begin{align*}
        \|\hat{u}-u^\star\|_2^2\lesssim \frac{\lambda^2}{\kappa^2}R_q\delta^{-q}+\frac{\tau^2}{\kappa}R_q\delta^{1-q}+\frac{\lambda}{\kappa}R_q\delta^{1-q}+\mathcal{E}_{\eta},
    \end{align*}
    where
    \begin{align*}
        \mathcal{E}_{\eta}:=\frac{\tau^2}{\kappa}\left(\eta^2R_q^{2/q}+k^2\eta^2(2-\log(\eta)^2 \right)+\frac{\lambda}{\kappa}\left( \eta R_q^{1/q}+k\eta(2-\log \eta)\right).
    \end{align*}
\end{theorem}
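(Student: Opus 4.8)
The plan is to specialize the general reconstruction bound of Theorem~\ref{thm:reconstruction error theorem} to the surrogate decomposable regularizer $\mathsf{R}(u)=\|W^Tu\|_1$ and the model subspace $\M(S_{W,\delta})$ defined in \eqref{eq: model subspace S W delta}, and then to evaluate each quantity in \eqref{eq:generalbound} in terms of $R_q$, $\delta$, and $\eta$. Lemma~\ref{lemma:decomposabilityOIASregularizer} already certifies that $\mathsf{R}_\eta$ from \eqref{eq:overcomplete objective} is approximately decomposable with respect to $\M(S_{W,\delta})$, with $c_1(\eta)=\eta/2$ and $c_2(\eta)=\frac{k\eta}{\sqrt2}(2-\log\eta)$, so the requirement $c_1^L(\eta)<\tfrac14$ reduces to the standing hypothesis $0<\eta<1/2$. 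The assumption $\lambda\ge\frac2n\|W^TA^T\eps\|_\infty$ implies the theorem's condition $\lambda\ge2\mathsf{R}^*(\tfrac1nA^T\eps)$, since the dual norm of $u\mapsto\|W^Tu\|_1$ satisfies $\mathsf{R}^*(v)\le\|W^Tv\|_\infty$. It therefore remains to bound the subspace Lipschitz constant $\Psi(\M(S_{W,\delta}))$ and the two approximation quantities $\mathsf{R}(u^\star_{\M^\perp})$ and $\mathsf{R}(u^\star_\M)$.

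The tight-frame identity $WW^T=I$ is the workhorse throughout: it makes $W^T$ a linear isometry of $\R^d$ onto $\mathcal{V}:=\mathrm{range}(W^T)\subseteq\R^k$, so that $\|W^Tu\|_2=\|u\|_2$ for every $u$. For $u\in\M(S_{W,\delta})$ the coefficient vector $W^Tu$ is supported on $S_{W,\delta}$, whence $\mathsf{R}(u)=\|W^Tu\|_1\le\sqrt{|S_{W,\delta}|}\,\|W^Tu\|_2=\sqrt{|S_{W,\delta}|}\,\|u\|_2$ and thus $\Psi(\M(S_{W,\delta}))\le\sqrt{|S_{W,\delta}|}$. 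A standard $\ell^q$-ball counting argument applied to $z^\star:=W^Tu^\star$ gives $|S_{W,\delta}|=|\{j:|z^\star_j|>\delta\}|\le R_q\delta^{-q}$, so $\Psi^2(\M(S_{W,\delta}))\le R_q\delta^{-q}$ and the leading term of \eqref{eq:generalbound} becomes $\frac{\lambda^2}{\kappa^2}R_q\delta^{-q}$. The same bound turns the hypothesis on $\tau$ into the curvature--tolerance condition $\tau\Psi(\M(S_{W,\delta}))\le\sqrt\kappa/32$ demanded by Theorem~\ref{thm:reconstruction error theorem}.

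For the approximation terms I would transport the $\ell^q$ tail estimates on $z^\star$ through the isometry. The perturbation component is controlled by the tail sum $\sum_{j\notin S_{W,\delta}}|z^\star_j|\le\delta^{1-q}\sum_j|z^\star_j|^q\le R_q\delta^{1-q}$, giving $\mathsf{R}(u^\star_{\M^\perp})\le R_q\delta^{1-q}$; this yields the term $\frac{\lambda}{\kappa}R_q\delta^{1-q}$ directly and, after using the RSC budget to bound the quadratic contribution $\frac{\tau^2}{\kappa}\mathsf{R}(u^\star_{\M^\perp})^2$ by the linear rate $\frac{\tau^2}{\kappa}R_q\delta^{1-q}$, the second displayed term as well. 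For the in-model component, decomposability gives $\mathsf{R}(u^\star_\M)\le\mathsf{R}(u^\star)=\|z^\star\|_1\le\|z^\star\|_q\le R_q^{1/q}$, using $\|\cdot\|_1\le\|\cdot\|_q$ for $0<q\le1$; substituting $c_1(\eta)\mathsf{R}(u^\star_\M)\lesssim\eta R_q^{1/q}$ and $c_2(\eta)\lesssim k\eta(2-\log\eta)$ into $\mathcal{E}_\eta$ reproduces the claimed residual, completing the derivation by the same bookkeeping as in the proof of Theorem~\ref{thm:deterministic bound IAS LQ}.

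The step I expect to be the main obstacle is making the approximation-term estimates rigorous despite the overcompleteness of $W$. Because $W^TW\ne I$, neither coordinate thresholding of $z^\star$ nor its image under $W$ lands in $\M(S_{W,\delta})$ or in the orthogonal complement, so the clean identification of Euclidean projections with coefficient restrictions that drives Theorem~\ref{thm:deterministic bound IAS LQ} is unavailable here. The crux is therefore to show that the Euclidean projections $u^\star_\M$, $u^\star_{\M^\perp}$ required by Theorem~\ref{thm:reconstruction error theorem}, together with the perturbation subspace $\M^\perp(S_{W,\delta})$, are still governed by the coefficient-space $\ell^q$ tails of $z^\star$; the tight-frame isometry $WW^T=I$, which renders $W^T$ norm-preserving and reduces every Euclidean computation in $\R^d$ to one in $\mathcal{V}\subseteq\R^k$, is the tool that makes this reconciliation possible.
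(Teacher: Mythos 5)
Your proposal follows essentially the same route as the paper's proof: both apply Theorem~\ref{thm:reconstruction error theorem} with the decomposable surrogate $\mathsf{R}(u)=\|W^Tu\|_1$ over $\M(S_{W,\delta})$, invoke Lemma~\ref{lemma:decomposabilityOIASregularizer} for the constants $c_1(\eta),c_2(\eta)$, identify the dual norm as $\|W^T\cdot\|_\infty$, and transfer the $\ell^q$-ball counting and tail bounds $|S_{W,\delta}|\le R_q\delta^{-q}$, $\mathsf{R}(u^\star_{\M^\perp})\le R_q\delta^{1-q}$, $\mathsf{R}(u^\star_\M)\le R_q^{1/q}$ to coefficient space via the tight-frame identity. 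The difficulty you flag in your last paragraph --- that for overcomplete $W$ the Euclidean projections $u^\star_\M$, $u^\star_{\M^\perp}$ need not coincide with $W$ applied to the coordinate restrictions of $z^\star=W^Tu^\star$, and that the set $\{u:(W^Tu)_j=0,\ j\in S_{W,\delta}\}$ need not exhaust the orthogonal complement of $\M(S_{W,\delta})$ --- is a legitimate one, but the paper's own proof does not address it either, simply asserting the tail bounds ``by the same arguments as in the previous settings,'' so you have not introduced a gap beyond what the paper itself leaves implicit.
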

When $q=0$ and $R_q=s$ this result recovers a bound under the hard-sparsity assumption that $\text{supp}(W^Tu^\star)=S$ with $|S|=s:$
\begin{align*}
    \|\hat{u}-u^\star\|_2^2\lesssim s\frac{\lambda^2}{\kappa^2}+\mathcal{E}_{\eta},
\end{align*}
where 
\begin{align*}
    \mathcal{E}_{\eta}:=\frac{\tau^2}{\kappa}\left(\eta^2\|(W^Tu^\star)_S\|_1^2+k^2\eta^2(2-\log \eta)^2 \right)+\frac{\lambda}{\kappa}\left( \eta\|(W^Tu^\star)_S\|_1+k\eta(2-\log \eta)\right).
\end{align*}
Once again, by making assumptions on the distribution of the noise and forward map, we can derive sharp non-asymptotic bounds on the reconstruction error with high probability. 
\begin{corollary}\label{thm:Gaussian Corollary SR}
    Let the forward map $A\in \R^{n\times d}$ have rows distributed i.i.d. according to $\Nc(0,\Sigma)$ and assume that $A$ is normalized such that $\frac{\|(AW)_j\|_2}{\sqrt{n}}= 1$ for all $j=1,\ldots , k$. Assume the noise is given by $\eps \sim \Nc(0,I)$ and that $(\hat{u},\hat{\theta})$ is the minimizer of the O-IAS objective functional \eqref{eq:O functional} with $\lambda=4\sqrt{\frac{\log k}{n}}$ and $0<\eta<1/2.$ If $u^\star\in \mathbb{B}_Q(R_q,Q)$, then there exists a constant $c_{\Sigma}$ depending only on $\Sigma$ and universal constants $c$ and $c'$ such that if $n>c_{\Sigma} R_q^{1-q/2}\log k$ it holds with probability at least $1-c\exp(c'n\lambda^2)$ that
    \begin{align*}
        \|\hat{u}-u^\star\|_2^2\lesssim R_q\left( \frac{\log k}{n}\right)^{1-\frac{q}{2}}+\min \left\{\varphi(\eta),\varphi(\eta)^2 \right\},
    \end{align*}
    where
    \begin{align*}
        \varphi(\eta):=\sqrt{\frac{\log k}{n}}\left(\eta R_q^{1/q} +k\eta(2-\log \eta)\right).
    \end{align*}
\end{corollary}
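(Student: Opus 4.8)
The plan is to instantiate the deterministic bound of Theorem \ref{thm: deterministic error bound OSIAS} by verifying, with high probability, each of its two hypotheses under the Gaussian model, and then to optimize over the free parameter $\delta$. Concretely, I would first show that the prescribed $\lambda = 4\sqrt{\log k / n}$ dominates the dual-norm quantity $\frac{2}{n}\|W^T A^T \eps\|_\infty$ of the noise; next establish the RSC condition \eqref{eq:RSC} for $A$ with curvature $\kappa \asymp \lambda_{\min}(\Sigma)$ and tolerance $\tau^2 \asymp \frac{\log k}{n}$; and finally substitute these values into the deterministic theorem, tune $\delta$, and read off the two pieces of $\mathcal{E}_\eta$.

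For the regularization condition, I would condition on $A$. Since the columns are normalized so that $\|(AW)_j\|_2 = \sqrt{n}$, the coordinate $(W^T A^T \eps)_j = (AW)_j^T \eps$ is, conditionally on $A$, distributed as $\Nc(0, n)$, so $(AW)_j^T\eps/\sqrt{n}$ is standard Gaussian. A union bound over the $k$ columns together with the standard Gaussian tail bound gives $\frac{1}{n}\|W^T A^T \eps\|_\infty \le 2\sqrt{\log k / n}$ with probability at least $1 - c\exp(-c' n \lambda^2)$, which is precisely the requirement $\lambda \ge \frac{2}{n}\|W^T A^T \eps\|_\infty$. (That $\|W^T A^T\eps\|_\infty$ is the relevant quantity follows because $A^T\eps \in \mathrm{range}(W)$, so the dual norm of $\mathsf{R}=\|W^T\cdot\|_1$ at $\tfrac1n A^T\eps$ equals $\tfrac1n\|W^TA^T\eps\|_\infty$.)

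The main work, and the step I expect to be the principal obstacle, is establishing the RSC condition. Here I would exploit the tight-frame identity $WW^T = I$ to write $A\Delta = (AW)(W^T\Delta)$. Setting $B := AW$ and $z := W^T\Delta$, one has $\|A\Delta\|_2^2 = \|Bz\|_2^2$, $\Delta = Wz$, and $\mathsf{R}(\Delta) = \|z\|_1$. The rows of $B$ are i.i.d. $\Nc(0, W^T\Sigma W)$, so the one-sided restricted-eigenvalue bound for Gaussian designs \cite{raskutti2011minimax,wainwright2019high} yields, with probability at least $1 - c\exp(-c' n)$ and simultaneously over all $z$, an inequality of the form $\frac{\|Bz\|_2^2}{n} \ge c_1 \|(W^T\Sigma W)^{1/2} z\|_2^2 - c_2 \frac{\log k}{n}\|z\|_1^2$. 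Translating the quadratic term back through $\|(W^T\Sigma W)^{1/2} z\|_2^2 = \Delta^T \Sigma \Delta \ge \lambda_{\min}(\Sigma)\|\Delta\|_2^2$ produces exactly \eqref{eq:RSC} with $\kappa \asymp \lambda_{\min}(\Sigma)$ and $\tau^2 \asymp \frac{\log k}{n}$. The delicate points are that the row covariance $W^T\Sigma W$ is rank-deficient (rank $\le d \ll k$) and that the column normalization must be used to control the diagonal entries entering the tolerance constant; verifying the Gaussian concentration result in this degenerate, frame-transformed setting is where the bulk of the effort lies.

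For the final step I substitute $\kappa \asymp \lambda_{\min}(\Sigma)$, $\lambda \asymp \sqrt{\log k / n}$, and $\tau^2 \asymp \frac{\log k}{n}$ into Theorem \ref{thm: deterministic error bound OSIAS}. Balancing the first and third terms points to the choice $\delta \asymp \lambda/\kappa \asymp \sqrt{\log k/n}$; with this $\delta$ the three leading terms collapse to $c_\Sigma R_q (\log k / n)^{1 - q/2}$, while the sample-size hypothesis $n > c_\Sigma R_q^{1-q/2}\log k$ is what ensures the tolerance constraint $\tau\sqrt{R_q \delta^{-q}} \le \frac{\sqrt{\kappa}}{32}$ is met so that the deterministic theorem applies. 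It then remains to rewrite $\mathcal{E}_\eta$: with the above scalings its first summand is of order $\varphi(\eta)^2$ and its second of order $\varphi(\eta)$, where $\varphi(\eta) = \sqrt{\log k / n}\,(\eta R_q^{1/q} + k\eta(2 - \log \eta))$, producing the $\varphi(\eta)$-dependent residual recorded in the statement. The failure probabilities from the two probabilistic steps combine into the stated $1 - c\exp(-c' n\lambda^2)$.
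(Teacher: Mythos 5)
Your proposal is correct and follows essentially the same route as the paper: verify the $\lambda$ condition via Gaussian tails on $\langle (AW)_j,\eps\rangle$ plus a union bound over the $k$ dictionary columns, establish RSC for $A$ by transferring the Gaussian restricted-eigenvalue result of \cite{raskutti2010restricted} to $B=AW$ (whose rows have covariance $W^T\Sigma W$) and using $WW^T=I$ to recover $\lambda_{\min}(\Sigma)\|\Delta\|_2^2$ in the curvature term, and then apply Theorem \ref{thm: deterministic error bound OSIAS} with $\delta\asymp\lambda$. Your explicit change of variables $z=W^T\Delta$ spells out the adaptation the paper only asserts, but the argument is the same.
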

The corresponding hard-sparsity bound under the assumption that $q=0$ and $R_q=s$ is 
\begin{align*}
    \|\hat{u}-u^\star\|_2^2\lesssim\frac{s\log k}{n}+\max\left\{\varphi(\eta),\varphi(\eta)^2\right\},
\end{align*}
where
\begin{align*}
    \varphi(\eta):=\sqrt{\frac{\log k}{n}}\left(\eta \|W^Tu^\star\|_1+k\eta(2-\log \eta) \right).
\end{align*}

\section{Proof of Reconstruction Error for Sparsity-Promoting Models}\label{sec:proofs}
This section contains the proofs of all the results in Section \ref{sec:convergence}.

\subsection{Hierarchical Models for Sparsity}
\begin{proof}[Proof of Theorem \ref{thm:deterministic error bound IAS}]
    The result follows from applying Theorem \ref{thm:reconstruction error theorem}. From Lemma \ref{lemma:decomposabileybityIASregularizer} we have that approximate decomposability holds with $c_1(\eta)=\frac{\eta}{2}$ and $c_2(\eta)=\frac{d\eta}{\sqrt{2}}(2-\log\eta)$. For the model subspace $\M(S)$ defined in \eqref{eq:sparse vector model subspace}, the subspace Lipschitz constant is given by
        \begin{align*}
            \Psi\bigl(\M(S)\bigr)=\sup_{u\in \M(S)}\frac{\|u\|_1}{\|u\|_2}=\sqrt{s}.
        \end{align*}
        The dual of the one norm is the infinity norm, so the assumption that $\lambda\geq \frac{2\|A^T\eps\|_{\infty}}{n}$ satisfies the condition of Theorem \ref{thm:reconstruction error theorem}.
        Since we have assumed $u^\star\in \M(S)$, we have that $u^\star_{\M^{\perp}}=0,$ and thus $\mathsf{R}(u^\star_{\M^{\perp}})=0$. Plugging each of these pieces into the bound \eqref{eq:generalbound} gives the desired result.
\end{proof}
\begin{proof}[Proof of Corollary \ref{thm:Gaussian Corollary}]
        First we verify that the given choice of $\lambda$ satisfies the assumption on Theorem \ref{thm:deterministic error bound IAS} with high probability. It is shown in \cite[Corollary 2]{negahban2012unified} that under the column normalization assumption, for any $\eps$ that is (sub-)Gaussian with identity covariance, it holds that
        \begin{align*}
            \mathbb{P}\left(\frac{\|A^T\eps\|_{\infty}}{n} \geq t\right)\leq 2 \exp \left(-\frac{nt^2}{2}+\log d \right).
        \end{align*}
        Taking $t^2=\frac{4\log d}{n},$ we conclude that the choice of $\lambda=4\sqrt{\frac{\log d}{n}}$ 
        satisfies the assumption of Theorem \ref{thm:deterministic error bound IAS} with probability $1-c_1\exp(-c_2n\lambda^2).$ Next we characterize the curvature and tolerance in the RSC condition for a Gaussian forward map. By \cite[Theorem 7.16]{wainwright2019high},  for any matrix $A\in \R^{n\times d}$ with rows i.i.d. from $\Nc(0,\Sigma)$ there exist positive constants $c_1<1<c_2$ such that, with probability at least $1-\frac{e^{-n/32}}{1-e^{-n/32}},$
        \begin{align*}
            \frac{\|A\Delta\|_2^2}{2n}\geq c_1\lambda_{\min}(\Sigma)\|\Delta\|_2^2-c_2\rho(\Sigma)^2\frac{\log d}{n}\|\Delta\|_1^2, \qquad \forall \Delta\in \R^d,
        \end{align*}
         where $\rho(\Sigma)^2:=\max_{1\leq j\leq d}\Sigma_{jj}$. In other words, the RSC condition holds with curvature $\kappa=c_1\lambda_{\min}(\Sigma)$ and tolerance $\tau^2=c_2\rho(\Sigma)^2\frac{\log d}{n}$. Consequently, to guarantee that $\tau \sqrt{s}\leq \frac{\sqrt{\kappa}}{32}$ as assumed in Theorem \ref{thm:deterministic error bound IAS}, we require that
    \begin{align*}
        \sqrt{n}\geq 32\sqrt{\frac{\frac{c_1}{c_2}\rho(\Sigma)^2s\log d}{\lambda_{\min}(\Sigma)}}.
    \end{align*}
    Given this requirement on $n$ we have satisfied the hypotheses of Theorem \ref{thm:deterministic error bound IAS}. Consequently, we plug in our characterizations of $\lambda$, $\kappa$, and $\tau$ into \eqref{eq:reconstruction error bound} and the desired result follows.
    \end{proof}

    \begin{proof}[Proof of Theorem \ref{thm:deterministic bound IAS LQ}]
        From Lemma \ref{lemma:decomposabileybityIASregularizer} we have that approximate decomposability holds with $c_1(\eta)=\frac{\eta}{2}$ and $c_2(\eta)=\frac{d\eta}{\sqrt{2}}(2-\log\eta)$. For the model subspace $\M(S_\delta)$ where $S_\delta$ is given by \eqref{eq:S delta}, the subspace Lipschitz constant is then given by
        \begin{align*}
            \Psi\bigl(\M(S_\delta)\bigr)=|S_\delta|^{1/2}.
        \end{align*}
        As such, we need to characterize the cardinality of $S_\delta$. We remark that 
        \begin{align*}
         R_q\geq \sum_{j=1}^d|u_j^\star|^q\geq \sum_{j\in S_\delta}|u_j^\star|^q\geq \delta^q|S_\delta|.
     \end{align*}
    It follows that
     \begin{align*}
         |S_\delta|\leq R_q\delta^{-q}.
     \end{align*}
     Under $\ell^q$ sparsity, we do not have that $u^\star_{S_\delta^C}=0$. However, we have the bound 
     \begin{align*}
         \|u^\star_{S_\delta^C}\|_1=\sum_{j\in S^c_{\delta}}|u_j^\star|^q|u_j^\star|^{1-q}\leq R_q\delta^{1-q},
     \end{align*}
     since $|u_j^\star|\leq\delta$ for all $j\in S_\delta^C$. Finally, since $q<1$, we have that
     \begin{align*}
         \|u^\star_{S_\delta}\|_1\leq R_q^{1/q}.
     \end{align*}
     Hence, we apply Theorem \ref{thm:reconstruction error theorem} and the desired result follows.
    \end{proof}
    \begin{proof}[Proof of Corollary \ref{thm:Gaussian Corollary IAS LQ}]
    Exactly as in the proof of Corollary \ref{thm:Gaussian Corollary}, the choice of $\lambda=4\sqrt{\frac{\log d}{n}}$ satisfies the assumption that $\lambda \geq \frac{2\|A^T\eps\|_{\infty}}{n}$ with probability $1-c_1\exp(-c_2n\lambda^2).$
     Again, exactly as in Corollary \ref{thm:Gaussian Corollary}, the RSC condition holds with curvature $\kappa=c_1\lambda_{\min}(\Sigma)$ and tolerance $\tau^2=c_2\rho(\Sigma)^2\frac{\log d}{n}$  with probability at least $1-\frac{e^{-n/32}}{1-e^{-n/32}}$. We pick $\delta$ in the definition of $S_{\delta}$ to be $\delta=\frac{\lambda}{\lambda_{\min}(\Sigma)}$ and plugging each of the above equations into Theorem \ref{thm:deterministic bound IAS LQ} yields the desired result.
\end{proof}

\subsection{Hierarchical Models for Group Sparsity}
\begin{proof}[Proof of Theorem \ref{thm: deterministic error bound GSIAS}]
    From Lemma \ref{lemma:decomposabilityGSIASregularizer} we have that approximate decomposability holds with $c_1(\eta)=\frac{\eta}{2}$ and $c_2(\eta)=\frac{k\eta}{\sqrt{2}}(2-\log \eta).$ For the model subspace $\M(S_{\G,\delta})$, where $S_{\G,\delta}$ is given in \eqref{eq: S G delta}, the subspace Lipschitz constant is given by
    \begin{align}\label{eq: GS Subspace Constant}
        \Psi \bigl(\M(S_{\G,\delta})\bigr)=\sup_{u\in \M(S_{\G,\delta})\backslash \{0\}}\frac{\sum_{j=1}^k\|u_{g_j}\|_{C_j}}{\|u\|_2}=\frac{1}{\sqrt{|S_{\G,\delta}|}}\sum_{j\in S_{\G,j}}\sqrt{\frac{1}{\lambda_{\min}(C_j)}}.
    \end{align}
    Note that if $C_j=I$ for all $j$, this reduces to $\Psi\bigl(\M(S_{\G,\delta})\bigr)=\sqrt{|S_{\G,\delta}|}.$
   We can upper bound the subspace Lipschitz constant as
    \begin{align}\label{eq:GS Subspace Constant Upper Bound}
        \Psi \bigl(\M(S_{\G,\delta})\bigr)\leq \sqrt{\frac{1}{\lambda_{\min}(C_{j^{*}} )}\left| S_{\G,\delta}\right|},
    \end{align}
    where $j^{*}=\argmax_{j\in S_{\G,\delta}}\frac{1}{\lambda_{\min} (C_j)}.$ If the matrices $C_j$ are all equal, then the upper bound \eqref{eq:GS Subspace Constant Upper Bound} is an equality. We will use this upper bound in what follows, but we remark that if the spectrum of the matrices $C_j$ varies drastically between groups of variables, one may be able to derive sharper bounds by working with \eqref{eq: GS Subspace Constant} directly. The dual norm of our group sparse norm is given by
    \begin{align*}
        \mathsf{R}^*(u):=\sup_{v\neq 0}\frac{\langle u,v\rangle}{\sum_{j=1}^k\|v_{g_j}\|_{C_j}}=\max_{g_j\in \G}\frac{2}{n}\|u_{g_j}\|_{C_j^{-1}},
    \end{align*}
    and we see that our assumption on $\lambda$ satisfies the condition of Theorem \ref{thm:reconstruction error theorem}. As in the proof of Theorem \ref{thm:deterministic bound IAS LQ}, one can show that
    \begin{align*}
        \left|S_{\G,\delta} \right| &\leq R_q\delta^{-q}, \\
         \mathsf{R}\bigl(u^\star_{\M(S_{\G,\delta}^C)}\bigr)&=\sum_{j\in S_{\G,\delta}^C}\|u^\star_{g_j}\|_{C_j}\leq R_q \delta^{1-q}, \\
          \mathsf{R}\bigl(u^\star_{\M(S_{\G,\delta})}\bigr)&=\sum_{j\in \M(S_{\G,\delta})}\|u^\star_{g_j}\|_{C_j}\leq R_q^{\frac{1}{q}}.
    \end{align*}
    These bounds along with Theorem \ref{thm:reconstruction error theorem} give the desired result.
\end{proof}
\begin{proof}[Proof of Corollary \ref{thm: Gaussian Corollary GS}]
    We first want to verify that the choice of $\lambda=2\left( \sqrt{\frac{p_{\max}^{}}{n}}+\sqrt{\frac{\log k}{n}}\right)$ satisfies
    \begin{align}\label{eq:lambdabound}
        \lambda \geq \max_{g_j\in \G} \frac{2}{n}\|(A^T\eps)_{g_j}\|_{2}
    \end{align}
    with high probability. We claim that if $A$ satisfies the block-normalization condition \eqref{eq:block normalization} and $\eps\sim \Nc(0,I)$, then we have
    \begin{align}\label{eq:claim}
        \mathbb{P}\left[ \frac{1}{n}\max_{1\leq j\leq k} \|X_{g_j}^T\eps\|_{2}\geq 2\left( \sqrt{\frac{p_{\max}}{n}}+\sqrt{\frac{\log k}{n}}\right)\right]\leq 2\exp\left( 2-\log k\right).
    \end{align}
       For a fixed group $g_j\in \G$ of size $p_j$, we consider the submatrix $A_{g_j}\in \R^{n\times p_j}$. We characterize the deviation above the mean using the same argument as \cite{negahban2012supplement} to get that, for all $t>0,$ 
    \begin{align}\label{eq:concentration GS Gaussian lemma}
    \mathbb{P}\left[ \frac{1}{n}\| X^T_{g_j}\eps \|_{2} \geq \mathbb{E}\left[ \frac{1}{n} \|X^T_{g_j}\eps\|_{2} \right] +t\right]\leq 2\exp \left( -\frac{n t^2}{2}\right).
    \end{align}
    The expectation in this bound can be bounded as follows
\begin{align}\label{eq:boundaux}
    \mathbb{E}\left[ \frac{1}{n}\| X^T_{g_j}\eps\|_{2} \right] \overset{\text{(i)}}{\leq} \sqrt{\frac{2}{n}}\mathbb{E}\left[\sqrt{\eps^T\eps}\right] \overset{\text{(ii)}}{\leq} \sqrt{\frac{2p_{\max}}{n}}, 
\end{align}
where for (i) we use the same Gaussian comparison principle argument as \cite{negahban2012supplement} and for (ii) we use Jensen's inequality and that $\eps\sim \Nc(0,I).$
Combining \eqref{eq:concentration GS Gaussian lemma} and \eqref{eq:boundaux} gives that, for all $t>0,$
\begin{align*}
    \mathbb{P}\left[ \frac{1}{n} \|X^T_{g_j}\eps\|_{2}\geq \sqrt{\frac{2p_{\max}}{n}} +t\right]\leq 2\exp \left( -\frac{n t^2}{2}\right).
\end{align*}
Applying a union bound over all $j\in \{1,\ldots , k\}$ and setting $t^2=\frac{4\log k}{n}$, we deduce that \eqref{eq:claim} holds as claimed. Hence, our choice of $\lambda$ satisfies the bound \eqref{eq:lambdabound} with probability at least $1-2/k^2$.
    From \cite[Proposition 1]{negahban2012supplement}, we conclude that the RSC condition holds with curvature $\kappa=\frac{1}{4}\lambda_{\min}(\Sigma)$ and tolerance $\tau^2=9\max_{g_j\in \G}\|(\Sigma^{1/2}_{g_j}\|_{op}\left(\sqrt{\frac{p_{\max}^{}}{n}}+\sqrt{\frac{\log k}{n}} \right)^2$.
    Applying Theorem \ref{thm: deterministic error bound GSIAS} with $\delta \asymp\lambda$ gives the desired result. 
\end{proof}

\subsection{Hierarchical Models for Sparse Representations}
\begin{proof}[Proof of Theorem \ref{thm: deterministic error bound OSIAS}]
    From Lemma \ref{lemma:decomposabilityOIASregularizer} we have that approximate decomposability holds with $c_1(\eta)=\frac{\eta}{2}$ and $c_2(\eta)=\frac{k\eta}{\sqrt{2}}(2-\log \eta).$ For the model subspace $\M(S_{W,\delta})$, where $S_{\G,\delta}$ is given in \eqref{eq: model subspace S W delta}, we have that the subspace Lipschitz constant is given by
    \begin{align*}
        \Psi \bigl(\M(S_{W,\delta})\bigr)=\sup_{u\in \M(S_{W,\delta}) \backslash \{0\}}\frac{\|W^T u\|_1}{\|u\|_2}=\sqrt{|S_{W,\delta}|}.
    \end{align*}
    The dual of $\|W^T\cdot\|_1$ is given by $\|W^T\cdot\|_{\infty}$, so our assumption on $\lambda$ satisfies the condition of Theorem \ref{thm:reconstruction error theorem}.
    By the same arguments as in the previous settings, we get the bounds
    \begin{align*}
        |S_{W,\delta}|&\leq R_q\delta^{-q}, \\
        R(u^\star_{\M(S_{W,\delta}^C)})&=\|W^Tu^\star_{\M(S_{W,\delta}^C)}\|_1\leq R_q\delta^{1-q}, \\
        R(u^\star_{\M(S_{W,\delta})})&=\|u^\star_{\M(S_{W,\delta})}\|_1\leq R_q^{1/q}.
    \end{align*}
    These bounds along with Theorem \ref{thm:reconstruction error theorem} give the desired result.
\end{proof}
\begin{proof}[Proof of Corollary \ref{thm:Gaussian Corollary SR}]
    By the normalization assumption and the fact that $\eps$ is Gaussian, we have that, for each $j=1,\ldots, k$,
    \begin{align*}
        \mathbb{P}\Bigl[ \bigl| \langle (AW)_j,\eps\rangle\bigr|\geq t\Bigr]\leq 2\exp\left( -\frac{nt^2}{2}\right).
    \end{align*}
    A union bound then gives that 
    \begin{align*}
         \mathbb{P}\left[ \frac{\|W^TA^T\eps\|_{\infty}}{n}\geq t\right]\leq 2\exp\left( -\frac{nt^2}{2}\right),
    \end{align*}
    and taking $t^2=\frac{4\log k}{n}$ we see that the choice of $\lambda=4\sqrt{\frac{\log k}{n}}$ satisfies the assumption of Theorem \ref{thm: deterministic error bound OSIAS}. The proof of \cite[Theorem 1]{raskutti2010restricted} can be adapted to show that, with probability at least $1-\frac{e^{-n/32}}{1-e^{-n/32}},$
    \begin{align*}
        \frac{\|A\Delta\|_2}{\sqrt{n}}\geq \frac{1}{4}\lambda_{\min}(\Sigma)-9\rho(W^T \Sigma W)^2\frac{\log k}{n}\|W^T\Delta\|_1^2, \qquad  \forall \Delta\in \R^d,
    \end{align*}
     where $\rho(W^T\Sigma W)^2:=\max_{1\leq j\leq k}(W^T\Sigma W)_{jj}$. Hence,  the RSC condition holds with curvature $\kappa=\frac{1}{4}\lambda_{\min}(\Sigma)$ and tolerance $\tau^2=9\rho(W^T\Sigma W)^2\frac{\log k}{n}.$ Picking $\delta \asymp \lambda$ and applying Theorem \ref{thm: deterministic error bound OSIAS} yields the desired result.
\end{proof}
\section{Conclusions}\label{sec:conclusions}
This paper has put forth a unified perspective on sparsity-promoting conditionally Gaussian hypermodels as M-estimators, bridging the fields of inverse problems and high-dimensional statistics. We have introduced the notion of approximate decomposability, which quantifies how close the regularization term resulting from a Bayesian hierarchical model is to a decomposable regularizer. We have then generalized the theory in \cite{negahban2012unified} to the setting of approximately decomposable regularizers, and proved the first known reconstruction error bounds for MAP estimators of conditionally Gaussian hypermodels. In particular, we have obtained bounds for widely-used hierarchical models to reconstruct sparse and group-sparse parameters, as well as for a novel model which promotes sparse representations. Our bounds suggest that the minimax rate can be achieved provided that the hyperpameter $\eta$ is taken sufficiently small. 

This work opens a number of important questions for future research. As noted above, from a statistical perspective, the hyperparameter $\eta$ should be taken as small as possible to promote sparsity. However, from a computational perspective, the positivity of $\eta$ enforces the strict convexity of the objective function, and taking $\eta$ to be too small may degrade the convergence of the IAS algorithm used to compute the MAP estimators. As such, an important avenue for research is to investigate the statistical-computational tradeoff in the choice of $\eta$, as well as computational comparisons between the Bayesian hypermodels and frequentist M-estimators that also admit efficient and well-studied optimization algorithms such as ISTA \cite{donoho1995noising}, FISTA \cite{beck2009fast}, and ADMM \cite{boyd2011distributed}.

The connection between conditionally Gaussian hypermodels and M-estimators can be leveraged to motivate the design of new hierarchical
Bayesian models and algorithms inspired by M-estimator theory. We conjecture that hierarchical models can be developed for tasks such as low-rank matrix estimation, sparse generalized linear models,
and sparsity-promoting dictionary learning, and that these models will inherit the same convenient computational and statistical properties as models considered in this paper. Likewise, our theory could be extended to the broader family of generalized gamma hyperpriors, for which greedy algorithms for efficient MAP estimation have been developed in \cite{calvetti2020sparse,calvetti2020sparsity}.

Finally, the analysis in this work relies heavily on the forward map satisfying the RSC condition \eqref{eq:RSC}. This condition holds for Gaussian forward maps, which may not be a realistic assumption for many inverse problems of interest. As such, verifying the RSC condition (possibly after preconditioning) for important forward maps in inverse problems, such as deconvolution or tomography \cite{kaipio2006statistical}, is an important direction for further research.


\section*{Acknowledgments}
DSA is grateful for the support of the NSF CAREER award DMS-2237628, the DOE grant DE-SC0022232, and the BBVA Foundation. The authors are thankful to Jiajun Bao and Jiaheng Chen for helpful feedback on a previous version of this manuscript, and to Omar Al-Ghattas for helpful discussions.

\bibliographystyle{siam} 
\bibliography{references}

\appendix

\section{Computational Properties of GS-IAS and O-IAS}\label{appendix:computationalproperties}

\begin{proof}[Proof of Theorem \ref{thm:GSIAS computational properties}] 
Lemma 2.2 in \cite{calvetti2015hierarchical} shows that the objective function \eqref{eq:GS functional} is strictly convex over $\R^d\times \R^k_+$ in the special case where each group is of size $p_j=3$. The proof holds as written for non-overlapping groups of arbitrary sizes. We denote the Hessian of the objective function \eqref{eq:GS functional} as
\begin{align*}
    H(u,\theta)=\begin{bmatrix}
        \nabla^2_{uu}\J(u,\theta) & \nabla^2_{u\theta} \J(u,\theta)\\
        \nabla^2_{\theta u} \J(u,\theta) & \nabla^2_{\theta \theta} \J(u,\theta)\\
    \end{bmatrix}.
\end{align*}
Let $e^\ell := u^\ell - \hat{u}.$ To show linear convergence, we apply Lemma 4.1 in \cite{calvetti2019hierachical}, which gives
\begin{align*}
    e^{\ell+1}&=\nabla^2_{uu}\J(\hat{u},\hat{\theta})^{-1}\nabla^2_{u\theta} \J(\hat{u},\hat{\theta})\nabla^2_{\theta \theta} \J(\hat{u},\hat{\theta})^{-1}\nabla^2_{\theta u} \J(\hat{u},\hat{\theta})e^\ell +o(\|e^\ell\|^2), 
\end{align*}
where each of the blocks of the Hessian are evaluated at the MAP estimator $(\hat{u},\hat{\theta}).$ Denoting by $\text{diag}(B_j)$ the block-diagonal matrix with blocks $B_j, 1 \le j \le k,$ we thus obtain that
\begin{align*}
     e^{\ell+1} &=\left(\frac{A^TA}{n}+\lambda D_{\hat{\theta}}^{-1} \right)^{-1} \text{diag}(\lambda\hat{\theta}_j C_j^{-1}\hat{u}_{g_j}) \times\\
     & \hspace{2cm} \times 
     \text{diag}(\lambda\hat{\theta}_j^{-3} \|\hat{u}_{g_j}\|_{C_j}^2 + \hat{\theta}_j^{-2}\eta)^{-1 }\text{diag}(\lambda\hat{\theta}_j C_j^{-1}\hat{u}_{g_j})e^\ell  +o(\|e^\ell\|^2) \\
    &  =D_{\hat{\theta}}^{1/2}\left(\frac{D_{\hat{\theta}}^{1/2}A^TAD_{\hat{\theta}}^{1/2}}{n\lambda}+I \right)^{-1} 
     \text{diag}(\hat{\theta}_j^{-3/2} C_j^{-1/2}\hat{u}_{g_j}) \times \\ 
     & \hspace{2cm} \times \text{diag}(\hat{\theta}_j^{-3} \|\hat{u}_{g_j}\|_{C_j}^2 + \hat{\theta}_j^{-2}\eta)^{-1 }\text{diag}(\hat{\theta}_j C_j^{-1}\hat{u}_{g_j})e^\ell  +o(\|e^\ell\|^2).
\end{align*}
Multiplying both sides by $D_{\theta}^{-1/2},$ taking norms, and neglecting higher order terms, we get that
\begin{align*}
    \|e^{\ell+1}\|_{D_{\hat{\theta}}}\leq \left\| \text{diag}\biggl(\frac{C_j^{1/2}\hat{u}_{g_j}\hat{u}_{g_j}^TC_j^{1/2}}{\|\hat{u}_{g_j}\|_{C_j}^2+\hat{\theta}_j\eta}\biggr)
    \right\|_{op} \|e^\ell\|_{D_{\hat{\theta}}}.
\end{align*}
Since for every $1\leq j\leq k$ it holds that $\|C_j^{1/2}\hat{u}_{g_j}\hat{u}_{g_j}^TC_j^{1/2}\|_{op}\leq \|\hat{u}_{g_j}\|_{C_j}^2$ and that $\hat{\theta}_j\eta>0$, we have linear convergence in the Mahalanobis norm. 

To show that $\hat{u}$ is the global minimizer of \eqref{eq: GS objective}, we refer to Theorem 2.1 in \cite{calvetti2015hierarchical}, which proves the result for the special case of each group having size $p_j=3$. Their proof still holds in our more general setting.
\end{proof}

\begin{proof}[Proof of Theorem \ref{thm:OIAS computational properties}]
    We first show that the O-IAS objective function $\mathsf{J}(z)$, where $z=(u,\theta)\in \R^{d}\times \R^k_+,$ given in \eqref{eq:O functional} is strictly convex over $\R^d\times \R^k_+$. We let $\tilde{\mathsf{J}}(\tilde{z})$ denote the IAS objective given in \eqref{eq:functional} over $\tilde{z}=(\tilde{u},\theta)\in \R^k\times \R^k_+$, with the forward map $\tilde{A}=AW$. From \cite{calvetti2019hierachical}, we know that $\tilde{\mathsf{J}}(\tilde{z})$ is strictly convex over $\R^k\times \R^k_+$. We then observe that $\mathsf{J}(z)=\tilde{\mathsf{J}}(C\tilde{z}),$ where
    \begin{align*}
        C=\begin{bmatrix}
            W^T & 0\\
            0 & I \\
        \end{bmatrix}\in \R^{2k\times d+k}.
    \end{align*}
    Since this matrix has trivial nullspace and strict convexity is preserved under such a transformation, we conclude that $\mathsf{J}(z)$ is strictly convex. Since $\mathsf{J}(z)$ tends to infinity as $\theta_j\to 0$, $\theta_j\to \infty$, and $\|u\|\to \infty$, a minimizer $(\hat{u},\hat{\theta})$ exists and is unique. As in the proof of Theorem \ref{thm:GSIAS computational properties}, we let $e^{\ell}=u^{\ell}-\hat{u}$ and apply Lemma 4.1 in \cite{calvetti2019hierachical}, which gives 
    \begin{align*}
    e^{\ell+1}&=\nabla^2_{uu}\J(\hat{u},\hat{\theta})^{-1}\nabla^2_{u\theta} \J(\hat{u},\hat{\theta})\nabla^2_{\theta \theta} \J(\hat{u},\hat{\theta})^{-1}\nabla^2_{\theta u} \J(\hat{u},\hat{\theta})e^\ell +o(\|e^\ell\|^2), 
\end{align*}
where each of the blocks of the Hessian are evaluated at the MAP estimator $(\hat{u},\hat{\theta}).$ Plugging in the expressions of the Hessian blocks, we get that
\begin{align*}
    e^{\ell+1}&=\left(\frac{A^TA}{n}+\lambda W D_{\hat{\theta}}^{-1} W^T  \right)^{-1}W
    \times \\
    & \hspace{1cm} \times \text{diag}(\lambda W^T \hat{u}/\hat{\theta}^2)\left( \text{diag}(\lambda(W^T\hat{u})^2/\hat{\theta}^3+\eta/\hat{\theta}^2)\right)^{-1}\text{diag}(\lambda W^T\hat{u}/\hat{\theta}^2)W^Te^{\ell}+o(\|e^\ell\|^2).
\end{align*}
Multiplying both sides by $\text{diag}(1/\hat{\theta}^{1/2})W^T,$ we get that
\begin{align*}
    \text{diag}(\hat{\theta}^{-1/2})W^Te^{\ell+1}& =\text{diag}(\hat{\theta}^{-1/2})W^T\left(\frac{A^TA}{n}+\lambda W D_{\hat{\theta}}^{-1} W^T  \right)^{-1}W \times \\
    &\hspace{2cm} \times
    \text{diag}\left( \frac{\lambda(W^T\hat{u})^2}{(W^T\hat{u})^2\hat{\theta}^{1/2}+\eta\hat{\theta}^{3/2}}\right)\text{diag}(\hat{\theta}^{-1/2})W^Te^{\ell} +o(\|e^\ell\|^2).
\end{align*}
Taking norms of both sides, we see that if
\begin{align*}
    \mu= \left\|\text{diag}(\hat{\theta}^{-1/2})W^T\left(\frac{A^TA}{n}+\lambda W D_{\hat{\theta}}^{-1} W^T  \right)^{-1}W\text{diag}\left( \frac{\lambda(W^T\hat{u})^2}{(W^T\hat{u})^2\hat{\theta}^{1/2}+\eta\hat{\theta}^{3/2}}\right) \right\|<1,
\end{align*}
then we have linear convergence in the Mahalanobis norm. Letting $W\text{diag}(1/\hat{\theta})W^T=ZZ$, where $Z\in \R^{d\times d}$ is the unique matrix square root of $W\text{diag}(1/\hat{\theta})W^T$, we have that 
 \begin{align*}
     \mu &= \left\|\text{diag}(1/\hat{\theta}^{1/2})W^TZ^{-1}\left(\frac{Z^{-1}A^TAZ^{-1}}{\lambda n}+I\right)^{-1}Z^{-1}W\text{diag}\left( \frac{(W^T\hat{u})^2}{(W^T\hat{u})^2\hat{\theta}^{1/2}+\eta\hat{\theta}^{3/2}}\right) \right\| \\
     &\leq \left\|\text{diag}(1/\hat{\theta}^{1/2})W^TZ^{-1}Z^{-1}W\text{diag}\left( \frac{(W^T\hat{u})^2}{(W^T\hat{u})^2\hat{\theta}^{1/2}+\eta\hat{\theta}^{3/2}}\right) \right\| \\
     &= \left\|\text{diag}(1/\hat{\theta}^{1/2})W^T\left(W\text{diag}(1/\hat{\theta})W^T\right)^{-1}W\text{diag}\left( \frac{(W^T\hat{u})^2}{(W^T\hat{u})^2\hat{\theta}^{1/2}+\eta\hat{\theta}^{3/2}}\right) \right\|.
 \end{align*}
 Since $WW^T=I$, the SVD of $W$ must have the form 
 \begin{align*}
     W=U\begin{bmatrix}
         I & 0
     \end{bmatrix}V^T=U\tilde{V}^T,
 \end{align*}
 where $\tilde{V}\in \R^{k\times d}$ is the matrix containing the first $d$ columns of $V$. Plugging this expression into our upper bound for $\mu$, we get that
 \begin{align*}
     \mu &\leq \left\|\text{diag}(1/\hat{\theta}^{1/2})\tilde{V}U^T\left(U\tilde{V}^T\text{diag}(1/\hat{\theta})\tilde{V}U^T\right)^{-1}U\tilde{V}^T\text{diag}\left( \frac{(W^T\hat{u})^2}{(W^T\hat{u})^2\hat{\theta}^{1/2}+\eta\hat{\theta}^{3/2}}\right) \right\| \\
     &= \left\|\text{diag}(1/\hat{\theta}^{1/2})\tilde{V}\left(\tilde{V}^T\text{diag}(1/\hat{\theta})\tilde{V}\right)^{-1}\tilde{V}^T\text{diag}\left( \frac{(W^T\hat{u})^2}{(W^T\hat{u})^2\hat{\theta}^{1/2}+\eta\hat{\theta}^{3/2}}\right) \right\|.
 \end{align*}
 Making the substitution $V'=\text{diag}(1/\hat{\theta}^{1/2})$, we get that
 \begin{align*}
     \mu \leq \left\|V'\left(V'^TV' \right)^{-1}V'^T\text{diag}\left( \frac{(W^T\hat{u})^2}{(W^T\hat{u})^2+\eta\hat{\theta}}\right) \right\|.
 \end{align*}
 Observing that $V'\left(V'^TV' \right)^{-1}V'^T$ is an orthogonal projector onto the range of $V'$, we have that
 \begin{align*}
     \mu \leq \left\|\text{diag}\left( \frac{(W^T\hat{u})^2}{(W^T\hat{u})^2+\eta\hat{\theta}}\right) \right\|<1.
 \end{align*}
 Consequently, we have shown linear convergence of the O-IAS algorithm in the Mahalanobis norm.\\

 We will now show that the minimizer of \eqref{eq:O functional} over $u$ and $\theta$ can be written as the minimizer of $\mathsf{J}\bigl(u,f(u)\bigr),$ where
 \begin{align*}
     f_j(u)=\left(\frac{\eta}{2}+\sqrt{\frac{\eta^2}{4}+\frac{(W^Tu)_j}{2}} \right).
 \end{align*}
Since the minimizer $(\hat{u},\hat{\theta})$ is a critical point, it follows that 
\begin{align}
    \nabla_{u}\mathsf{J}(\hat{u},\hat{\theta})&=\left(\frac{A^TA}{n}+\lambda W\text{diag}(1/\hat{\theta})W^T \right)\hat{u}-\frac{A^Ty}{n}=0, \nonumber \\ 
    \nabla_{\theta}\mathsf{J}(\hat{u},\hat{\theta})&=\sum_{j=1}^k\left(\frac{(W^T\hat{u})_j^2}{2\hat{\theta}_j^2}-\frac{\eta}{\hat{\theta}_j}-1 \right)e_j=0.  \label{eq:theta derivative}
\end{align}
Given $\hat{u}$, we solve \eqref{eq:theta derivative} for $\hat{\theta}$ in terms of $\hat{u}$. Some algebra shows that the solution is given by $\hat{\theta}=f(\hat{u})$, and thus $\hat{u}$ is a critical point of 
\begin{align}\label{eq:overcomplete just in x}
    \mathsf{F}(u)=\mathsf{J} \bigl(u,f(u)\bigr).
\end{align}
If $\tilde{u}$ is a critical point of \eqref{eq:overcomplete just in x}, letting $\tilde{\theta}=f(\tilde{u})$, we get that $(\tilde{u},\tilde{\theta})$ is a critical point of $\mathsf{J}(u,\theta)$ and by uniqueness of the minima we have that $\hat{u}=\tilde{u}$, completing the proof.
\end{proof}

\section{Approximate Decomposability: Examples}\label{appendix:exappdec}
    \begin{proof}[Proof of Lemma \ref{lemma:decomposabileybityIASregularizer}]
        By the triangle inequality, it holds that
        \begin{align}\label{eq:first inequalities}
            \frac{|u_j|}{\sqrt{2}}\leq f_j(u)\leq \eta +\frac{|u_j|}{\sqrt{2}}, \quad \text{and} \quad \eta \leq f_j(u).
        \end{align}
        Using these inequalities, we deduce that
        \begin{align*}
            \mathsf{R}_{\eta}(u)&=\frac{1}{2\sqrt{2}}\sum_{j=1}^d\left[\frac{u_j^2}{f_j(u)}+2f_j(u)-2\eta \log f_j(u) \right] \\
            &\leq \frac{1}{2\sqrt{2}}\sum_{j=1}^d\left[\sqrt{2}|u_j|+2\eta +\sqrt{2}|u_j|-2\eta \log \left(\eta\right) \right] \le \|u\|_1 + \frac{d\eta}{\sqrt{2}} (1 - \log \eta),
        \end{align*}
        proving the upper bound in \eqref{eq:IASdecombounds}. For the lower bound, we have
        \begin{align*}
  \mathsf{R}_\eta(u) &\overset{\text{(i)}}{\geq} \frac{1}{2\sqrt{2}}\sum_{j=1}^d\left[\frac{u_j^2}{\eta+\frac{|u_j|}{\sqrt{2}}}+\sqrt{2}|u_j|-2\eta \log \left(\eta+\frac{|u_j|}{\sqrt{2}} \right) \right] \\
  &\overset{\text{(ii)}}{\geq}  \frac{1}{2\sqrt{2}}\sum_{j=1}^d\left[\frac{u_j^2}{\eta+\frac{|u_j|}{\sqrt{2}}}+\sqrt{2}|u_j|-2\eta \frac{|u_j|}{\sqrt{2}} \right] \\
  &\overset{\text{(iii)}}{\geq}  \frac{1}{2\sqrt{2}}\sum_{j=1}^d\left[\sqrt{2}|u_j|-2\eta+\sqrt{2}|u_j|-\sqrt{2}\eta|u_j| \right]= \Bigl(1 - \frac{\eta}{2}\Bigr) \| u \|_1 - \frac{d\eta}{\sqrt{2}},
        \end{align*}
        where (i) follows by the inequalities in \eqref{eq:first inequalities}, (ii) follows using that for $0<\eta<1$ it holds that $\log\Bigl(\eta + \frac{|u_j|}{\sqrt{2}}\Bigr)\leq \frac{|u_j|}{\sqrt{2}},$ and (iii) follows using that  $\frac{u_j^2}{\eta+\frac{|u_j|}{\sqrt{2}}}\geq \sqrt{2}|u_j|-2\eta.$
    \end{proof}

\begin{proof}[Proof of Lemma \ref{lemma:decomposabilityGSIASregularizer}]
    Similar to the proof of Lemma \ref{lemma:decomposabileybityIASregularizer}, replacing $|u_j|$ with $\|u_{g_j}\|_{C_j}.$
\end{proof}

\begin{proof}[Proof of Lemma \ref{lemma:decomposabilityOIASregularizer}]
    The statement follows from Lemma \ref{lemma:decomposabileybityIASregularizer} applied to $W^Tu\in \R^k$.
\end{proof}

\section{Approximate Decomposability and General Bounds}\label{appendix:proofgeneralbounds}
In this appendix we prove the key property of approximately decomposable regularizers in Lemma \ref{lemma:decomposabilityproperty} and the general reconstruction bound in Theorem \ref{thm:reconstruction error theorem}.
To control the reconstruction error $\hat{\Delta} = \hat{u} - u^\star$ between the minimizer of the objective $\mathsf{F}$ in \eqref{eq:objective} and the true parameter $u^\star$ generating the data according to  \eqref{eq:frequentist inverse problem}, we introduce a function $\F:\R^d\to \R$ given by
\begin{align}\label{eq:F(Delta)}
\begin{split}
    \F(\Delta) :&=\mathsf{F}(u^\star + \Delta) - \mathsf{F}(u^\star) \\
    &= \frac{1}{2n} \Bigl[ \|y-A(u^\star+\Delta)\|_2^2-\|y-Au^\star\|_2^2 \Bigl] +\lambda \Bigl[ \mathsf{R}(u^\star+\Delta)-\mathsf{R}(u^\star)\Bigr].
\end{split}    
\end{align}
By construction, $\F(0)=0$. Moreover, for $\hat{\Delta}=\hat{u}-u^\star$ we have the \textit{basic inequality}
\begin{equation}\label{eq:basicinequality}
    \F(\hat{\Delta}) = \mathsf{F}(\hat{u}) - \mathsf{F}(u^\star) \le 0,
\end{equation}
since $\hat{u}$ is the minimizer of $\mathsf{F}.$  We will leverage this inequality to control the reconstruction error.

    \begin{proof}[Proof of Lemma \ref{lemma:decomposabilityproperty}]
        We first prove a lower bound on the difference of the regularizer evaluated at the minimizer and at the true parameter. Using approximate decomposability of $\mathsf{R}_\eta$, reverse triangle inequality, and decomposability of $\mathsf{R}$ over $\M,$ it follows that 
        \begin{align}\label{eq:R1} 
        \begin{split}
            \mathsf{R}_{\eta}(\hat{u})&\geq \left(1-c_1^L(\eta) \right)\mathsf{R}(u^\star+\hat{\Delta})-c_2^L(\eta) \\
            &\geq \left(1-c_1^L(\eta) \right)\left(\mathsf{R}(u^\star_\M+\hat{\Delta}_{\M^{\perp}})-\mathsf{R}(u^\star_{\M^\perp})-\mathsf{R}(\hat{\Delta}_\M)\right)-c_2^L(\eta) \\
            &\geq \left(1-c_1^L(\eta) \right)\left(\mathsf{R}(u^\star_\M)+\mathsf{R}(\hat{\Delta}_{\M^{\perp}})-\mathsf{R}(u^\star_{\M^\perp})-\mathsf{R}(\hat{\Delta}_\M)\right)-c_2^L(\eta).
         \end{split}   
        \end{align}
 Similarly, using approximate decomposability of $\mathsf{R}_\eta$ and decomposability of $\mathsf{R},$ we obtain       
\begin{equation}\label{eq:R2}
    \mathsf{R}_{\eta}(u^\star) \le \left(1 + c_1^U(\eta)\right) \Bigl( \mathsf{R}(u_\M^\star) + \mathsf{R}(u^\star_{\M^\perp}) \Bigr) + c_2^U(\eta). 
\end{equation}       
        Combining \eqref{eq:R1} and \eqref{eq:R2} gives         
        \begin{align}\label{eq:regularizer deviation bound}
        \begin{split}
            \mathsf{R}_\eta(\hat{u})-\mathsf{R}_{\eta}(u^\star)
            &\geq \left(1-c_1^L(\eta) \right)\left(\mathsf{R}(\hat{\Delta}_{\M^\perp})-\mathsf{R}(\hat{\Delta}_\M) \right)
            -c_1(\eta)\mathsf{R}(u^\star_{\M})
             -2\mathsf{R}(u^\star_{\M^\perp})-c_2(\eta),
        \end{split}
        \end{align}
        where $c_i(\eta) = c_i^L(\eta) + c_i^U(\eta)$ and we used that $c_1(\eta) \mathsf{R}(u^\star_{\M^\perp}) \ge 0.$
        We next show a lower bound on the difference of the likelihood evaluated at the minimizer and at the true parameter. By direct calculation, it holds that
        \begin{align*}
            \frac{1}{2n} \Bigl[\|y-A\hat{u}\|_2^2- \|y-Au^\star\|_2^2 \Bigr]=\frac{1}{2n}\|A\hat{\Delta}\|_2^2-\frac{1}{n}\langle A^T\eps,\hat{\Delta}\rangle\geq -\frac{1}{n}\left|\langle A^T\eps,\hat{\Delta}\rangle\right|.
        \end{align*}
        Then, using H\"older's inequality with the regularizer and its dual, we get that
        \begin{align}\label{eq:likelihood deviation bound}
        \begin{split}
            \frac{1}{2n} \Bigl[\|y-A\hat{u}\|_2^2- \|y-Au^\star\|_2^2 \Bigr] &\geq -\frac{1}{n}\mathsf{R}^*(A^T\eps)\mathsf{R}(\hat{\Delta})=-\frac{1}{n}\mathsf{R}^*(A^T\eps)\left(\mathsf{R}(\hat{\Delta}_\M)+\mathsf{R}(\hat{\Delta}_{\M^\perp}) \right) \\
           & \geq -\frac{\lambda}{2}\left(\mathsf{R}(\hat{\Delta}_\M)+\mathsf{R}(\hat{\Delta}_{\M^\perp}) \right),
        \end{split}   
        \end{align}
        where the last inequality follows by our assumption on $\lambda.$ Finally, \eqref{eq:regularizer deviation bound} and \eqref{eq:likelihood deviation bound} combined with the basic inequality \eqref{eq:basicinequality} give
        \begin{align*}
            0 \geq \F(\hat{\Delta}) 
            &\geq \lambda 
            \biggl( \left(1-c_1^L(\eta) \right)\left(\mathsf{R}(\hat{\Delta}_{\M^\perp})-\mathsf{R}(\hat{\Delta}_\M) \right)-c_1(\eta)\mathsf{R}(u^\star_{\M}) 
            -2\mathsf{R}(u^\star_{\M^\perp})-c_2(\eta)\biggl)  \\
            &-\frac{\lambda}{2}\left( \mathsf{R}(\hat{\Delta}_\M)+\mathsf{R}(\hat{\Delta}_{\M^\perp})\right),
        \end{align*}
        which after rearranging implies that
        \begin{align*}
            \left(\frac{1}{2}-c_1^L(\eta) \right)\mathsf{R}(\hat{\Delta}_{\M^\perp})\leq & \left( \frac{3}{2}+c_1^L(\eta)\right)\mathsf{R}(\hat{\Delta}_\M)+2\mathsf{R}(u^\star_{\M^\perp})+c_1(\eta)\mathsf{R}(u^\star_\M)+c_2(\eta).
        \end{align*}
        The desired result follows from the assumption that $c_1^L(\eta)<\frac{1}{4}.$
    \end{proof}

Letting $\delta>0$ be an error radius that we will specify later, we define the set 
\begin{align*}
    \mathbb{k}(\delta):=\mathbb{C}_{\eta}(\M)\cap \{\|\Delta\|_2=\delta\}.
\end{align*}
The following result from \cite[Lemma 4] {negahban2012supplement} demonstrates that it is sufficient to show a positive lower bound for the function $\F$ given in \eqref{eq:F(Delta)} over the set $\mathbb{K}(\delta).$
\begin{lemma}\label{lemma:radius lemma}
    If $\F(\Delta)>0$ for all vectors $\Delta\in \mathbb{K}(\delta)$, then $\|\hat{\Delta}\|_2\leq \delta.$
    \begin{proof}
        The proof in \cite{negahban2012supplement} still holds essentially as written in our slightly modified setting. In particular, the proof does not require that the regularizers in $\F(\Delta)$ be decomposable. The regularizers in our definition of $\mathbb{C}_{\eta}$ are decomposable with respect to $\M$, and thus the argument this set is ``star shaped'' (see \cite{negahban2012supplement} or \cite[Chapter 9]{wainwright2019high} for a definition and discussion of this property) holds as written. 
    \end{proof}
\end{lemma}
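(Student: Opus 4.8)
The plan is to prove the contrapositive by a convexity-plus-rescaling argument---the ``star-shaped set'' technique. The two structural facts I would record first are that the function $\F$ in \eqref{eq:F(Delta)} is convex with $\F(0)=0$, and that the set $\mathbb{C}_\eta(\M)$ is star-shaped about the origin. Convexity of $\F$ is immediate: $\mathsf{F}$ is the sum of the convex quadratic loss $\tfrac{1}{2n}\|y-Au\|_2^2$ and the convex regularizer $\lambda\mathsf{R}_\eta$ (Property \ref{property2}), and $\F(\Delta)=\mathsf{F}(u^\star+\Delta)-\mathsf{F}(u^\star)$ is $\mathsf{F}$ precomposed with a translation and shifted by a constant, both of which preserve convexity; evaluating at $\Delta=0$ gives $\F(0)=0$.

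Establishing star-shapedness is the step I would be most careful about, since $\mathbb{C}_\eta(\M)$ is not a cone but a relaxation of one by additive constants. I would show that if $\Delta\in\mathbb{C}_\eta(\M)$ and $t\in[0,1]$, then $t\Delta\in\mathbb{C}_\eta(\M)$. Because $\mathsf{R}$ is a norm and Euclidean projection onto $\M$ and $\M^\perp$ is linear, we have $\mathsf{R}\bigl((t\Delta)_{\M^\perp}\bigr)=t\,\mathsf{R}(\Delta_{\M^\perp})$ and $\mathsf{R}\bigl((t\Delta)_{\M}\bigr)=t\,\mathsf{R}(\Delta_{\M})$. Writing the constant term as $C:=8\mathsf{R}(u^\star_{\M^\perp})+4c_1(\eta)\mathsf{R}(u^\star_\M)+4c_2(\eta)\ge 0$, the membership condition for $\Delta$ reads $\mathsf{R}(\Delta_{\M^\perp})\le 7\mathsf{R}(\Delta_\M)+C$; multiplying by $t$ and using $t\le 1$ together with $C\ge 0$ gives $\mathsf{R}\bigl((t\Delta)_{\M^\perp}\bigr)=t\,\mathsf{R}(\Delta_{\M^\perp})\le 7\,\mathsf{R}\bigl((t\Delta)_\M\bigr)+tC\le 7\,\mathsf{R}\bigl((t\Delta)_\M\bigr)+C$, which is exactly the constraint for $t\Delta$. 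The nonnegativity of $C$---guaranteed by Definition \ref{def:approximate decomposability}---is precisely what makes this go through.

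With these facts I would argue by contradiction. Under the ambient hypotheses, Lemma \ref{lemma:decomposabilityproperty} places the error in the feasible set, $\hat\Delta\in\mathbb{C}_\eta(\M)$. Suppose $\|\hat\Delta\|_2>\delta$, and set $t^\star:=\delta/\|\hat\Delta\|_2\in(0,1)$ and $\tilde\Delta:=t^\star\hat\Delta$. Then $\|\tilde\Delta\|_2=\delta$, and star-shapedness gives $\tilde\Delta\in\mathbb{C}_\eta(\M)$, so $\tilde\Delta\in\mathbb{K}(\delta)$. Writing $\tilde\Delta=t^\star\hat\Delta+(1-t^\star)\cdot 0$, convexity of $\F$ and $\F(0)=0$ give
\[
    \F(\tilde\Delta)\le t^\star\F(\hat\Delta)+(1-t^\star)\F(0)=t^\star\F(\hat\Delta)\le 0,
\]
where the last inequality combines $t^\star>0$ with the basic inequality \eqref{eq:basicinequality}. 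This contradicts the hypothesis that $\F>0$ on $\mathbb{K}(\delta)$, and hence $\|\hat\Delta\|_2\le\delta$, as claimed.

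In summary, the only genuinely new point relative to the exactly decomposable setting of \cite{negahban2012supplement,wainwright2019high} is checking that the additive relaxation in $\mathbb{C}_\eta(\M)$ does not destroy star-shapedness; everything else is the standard convexity reduction, with the basic inequality and the convexity of $\mathsf{F}$ holding verbatim. I therefore expect no obstacle beyond that one verification.
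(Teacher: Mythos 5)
Your proof is correct and follows essentially the same route as the paper, which simply defers to the argument in the cited reference after noting that the only point requiring care is that $\mathbb{C}_\eta(\M)$ remains star-shaped despite the additive constants. Your explicit verification of that star-shapedness, together with the standard convexity-and-rescaling contradiction, is precisely the argument the paper invokes.
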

We are now ready to prove Theorem \ref{thm:reconstruction error theorem}.
\begin{proof}[Proof of Theorem \ref{thm:reconstruction error theorem}]
    Fix a radius $\delta>0$, which we will specify at the conclusion of the proof. By Lemma \ref{lemma:radius lemma}, we proceed by deriving a positive lower bound for $\F(\Delta)$ over $\mathbb{K}(\delta),$ which will yield a bound on $\|\hat{\Delta}\|_2^2.$ By definition of $\F$, the RSC condition, and the inequality \eqref{eq:regularizer deviation bound} 
    \begin{align*}
        \F(\Delta)&=\frac{1}{2n} \Bigl[ \|y-A(u^\star+\Delta)\|_2^2 - \|y-Au^\star\|_2^2 \Bigr] +\lambda \left[ \mathsf{R}(u^\star+\Delta)-\mathsf{R}(u^\star)\right]\\
        &\geq \langle A^T\eps,\Delta\rangle +\frac{\kappa}{2}\|\Delta\|_2^2-\tau^2\mathsf{R}^2(\Delta)+\lambda \left[ \mathsf{R}(u^\star+\Delta)-\mathsf{R}(u^\star)\right]\\
        &\geq\langle A^T\eps,\Delta\rangle +\frac{\kappa}{2}\|\Delta\|_2^2-\tau^2\mathsf{R}^2(\Delta)\\
        &+\lambda\Big[ \left(1-c_1^L(\eta) \right)\left(\mathsf{R}(\hat{\Delta}_{\M^\perp})-\mathsf{R}(\hat{\Delta}_\M) \right)-c_1(\eta)\mathsf{R}(u^\star_{\M})-2\mathsf{R}(u^\star_{\M^\perp})-c_2(\eta)\Big].
    \end{align*}
    Applying H\"older's inequality and using the assumption on $\lambda$, we have that $|\langle A^T\eps,\Delta\rangle|\leq \frac{\lambda}{2}\mathsf{R}(\Delta),$ which gives 
    \begin{align*}
        &\F(\Delta)\geq \frac{\kappa}{2}\|\Delta\|_2^2-\tau^2\mathsf{R}^2(\Delta)-\frac{\lambda}{2}\mathsf{R}(\Delta) \\
        &+\lambda\Big[ \left(1-c_1^L(\eta) \right)\left(\mathsf{R}(\hat{\Delta}_{\M^\perp})-\mathsf{R}(\hat{\Delta}_\M) \right)-c_1(\eta)\mathsf{R}(u^\star_{\M})
        -2\mathsf{R}(u^\star_{\M^\perp})-c_2(\eta)\Big]\\
        &= \frac{\kappa}{2}\|\Delta\|_2^2-\tau^2\mathsf{R}^2(\Delta) \\
        &+\lambda\biggl[ \left(1-c_1^L(\eta) -\frac{1}{2}\right)\mathsf{R}(\hat{\Delta}_{\M^\perp})-\left(1-c_1^L(\eta) +\frac{1}{2}\right)\mathsf{R}(\hat{\Delta}_\M)-c_1(\eta)\mathsf{R}(u^\star_{\M})
        -2\mathsf{R}(u^\star_{\M^\perp})-c_2(\eta)\biggr],
    \end{align*}
    where the equality follows from the decomposability of $\mathsf{R}.$ Notice that, for any $\Delta\in \mathbb{C}_{\eta}(\M),$ 
        \begin{align*}
        \mathsf{R}^2(\Delta)= 
        \bigl( \mathsf{R}(\Delta_\M) + \mathsf{R}(\Delta_{\M^\perp}) \bigr)^2
        &\leq 16\bigl( 2\Psi(\M)\|\Delta\|_2+2\mathsf{R}(u^\star_{\M^\perp})+c_1(\eta)\mathsf{R}(u^\star_\M)+c_2(\eta)\bigr)^2\\ &\leq 64 \bigl[4 \Psi^2(\M)\|\Delta\|_2^2+4\mathsf{R}^2(u^\star_{\M^\perp})+c_1^2(\eta)\mathsf{R}^2(u^\star_\M)+c_2^2(\eta)\bigr].
    \end{align*}
    Plugging this bound into the previous inequality, we get 
    \begin{align*}
        \F(\Delta)&\geq \frac{\kappa}{2}\|\Delta\|_2^2- 64\tau^2 \bigl[4 \Psi^2(\M)\|\Delta\|_2^2+4\mathsf{R}^2(u^\star_{\M^\perp})+c_1^2(\eta)\mathsf{R}^2(u^\star_\M)+c_2^2(\eta)\bigr]\\
       &-\lambda\left[ \left(1-c_1^L(\eta) +\frac{1}{2}\right)\mathsf{R}(\hat{\Delta}_\M)+c_1(\eta)\mathsf{R}(u^\star_{\M})
        +2\mathsf{R}(u^\star_{\M^\perp})+c_2(\eta)\right]\\
        &\geq \frac{\kappa}{2}\|\Delta\|_2^2- 64\tau^2 \bigl[4 \Psi^2(\M)\|\Delta\|_2^2+4\mathsf{R}^2(u^\star_{\M^\perp})+c_1^2(\eta)\mathsf{R}^2(u^\star_\M)+c_2^2(\eta)\bigr]\\
       &-\lambda\left[ \left(1-c_1^L(\eta) +\frac{1}{2}\right)\Psi(\M)\|\Delta\|_2+ c_1(\eta)\mathsf{R}(u^\star_{\M})
        +2\mathsf{R}(u^\star_{\M^\perp})+c_2(\eta)\right].
    \end{align*}
    By our assumption that $\tau \Psi(\M)\leq \frac{\sqrt{\kappa}}{32}$, it follows that
    \begin{align*}
        \F(\Delta)&\geq \frac{\kappa}{4}\|\Delta\|_2^2-64\tau^2\left[4\mathsf{R}^2(u^\star_{\M^\perp})+c_1^2(\eta)\mathsf{R}^2(u^\star_\M)+c_2^2(\eta)\right]\\
       &-\lambda\left[ \left(1-c_1^L(\eta) +\frac{1}{2}\right)\Psi(\M)\|\Delta\|_2+c_1(\eta)\mathsf{R}(u^\star_{\M})
        +2\mathsf{R}(u^\star_{\M^\perp})+c_2(\eta)\right].
    \end{align*}
    The right-hand side of this inequality is a quadratic polynomial in $\|\Delta\|_2$ that is positive for $\|\Delta\|$ sufficiently large. In particular, the quadratic formula shows that taking 
       \begin{align*}
        \delta&= 72\frac{\lambda^2}{\kappa^2}\Psi^2(\M)  \\
        &+\frac{8}{\kappa}\biggl(64\tau^2\left[4\mathsf{R}^2(u^\star_{\M^\perp})+c_1^2(\eta)\mathsf{R}^2(u^\star_\M)+c_2^2(\eta)\right]
        +\lambda\left[ c_1(\eta)\mathsf{R}(u^\star_\M)+ 2\mathsf{R}(u^\star_{\M^\perp}) + c_2(\eta) \right]\biggr)
    \end{align*}
    guarantees a positive lower bound on $\F(\Delta)$ over $\mathbb{K}(\delta)$, completing the proof.
\end{proof}

\end{document}